\newtheorem{thm}{Theorem}[section]
\newtheorem{prop}[thm]{Proposition}
\newtheorem{lem}[thm]{Lemma}
\newtheorem*{theorem}{Theorem}
\newtheorem*{lemma}{Lemma}
     \newcommand{\norm}[1]{\left\|#1\right\|}
     \newcommand{\abs}[1]{\left|#1\right|}
     \newcommand{\ip}[2]{\left \langle #1, #2 \right \rangle}
     \newcommand{\prob}[2]{\mathbb{P}_{#2} \left(#1\right)}
     \newcommand{\E}[0]{\mathbb{E}}
     \newcommand{\Var}[0]{\text{Var}}
     \newcommand{\A}[0]{\mathcal{A}}
     \newcommand{\vect}[1]{\boldsymbol{#1}}
     \newcommand{\deq}[0]{\ {\buildrel d \over =}\  }
     \def\gae{\lower 3pt \hbox{$\ \buildrel {\displaystyle >}\over \sim \ $}} 
     \def\lae{\lower 3pt \hbox{$\ \buildrel {\displaystyle <}\over \sim \ $}} 
\begin{document}
  
    \title{\bf Emergence of a Giant Component in Random Site Subgraphs of a $d$-Dimensional Hamming Torus}
    
\author{David Sivakoff\footnote{Dissertation research at UC Davis supported in part by NSF VIGRE Grant No. DMS-0636297, and NSF Grant Nos. DMS-0805970 and  DMS-0505734.} \\ Department of Mathematics, University of California, Davis.}

    \maketitle
 
 \begin{abstract}
The $d$-dimensional Hamming torus is the graph whose vertices are all of the integer points inside an $a_1 n \times a_2 n \times \cdots \times a_d n$ box in $\mathbb{R}^d$ (for constants $a_1, \ldots, a_d >0$), and whose edges connect all vertices within Hamming distance one.  We study the size of the largest connected component of the subgraph generated by independently removing each vertex of the Hamming torus with probability $1-p$.  We show that if $p= \frac{\lambda}{n}$, then there exists $\lambda_c >0$, which is the positive root of a degree $d$ polynomial whose coefficients depend on $a_1, \ldots, a_d$, such that for $\lambda < \lambda_c$ the largest component has $O(\log n)$ vertices (a.a.s. as $n \to \infty$), and for $\lambda > \lambda_c$ the largest component has  $(1-q) \lambda \left(\prod_i a_i \right) n^{d-1} + o (n^{d-1})$ vertices and the second largest component has $O(\log n)$ vertices (a.a.s.).  An implicit formula for $q < 1$ is also given.  Surprisingly, the value of $\lambda_c$ that we find is distinct from the critical value for the emergence of a giant component in the random edge subgraph of the Hamming torus.  Additionally, we show that if $p = \frac{c \log n}{n}$, then when $c < \frac{d-1}{\sum a_i}$ the site subgraph of the Hamming torus is not connected, and when $c > \frac{d-1}{\sum a_i}$ the subgraph is connected (a.a.s.).  We also show that the subgraph is connected precisely when it contains no isolated vertices.
\end{abstract}

 %
 %
    \section{Introduction}
Erd\"os and R\'enyi studied random subgraphs of the complete graph in~\cite{E-R:1960} (for an account of their results, see~\cite{durrett:2007}).  In the Erd\"os-R\'enyi model, a random subgraph of the complete graph on $n$ nodes is obtained by independently deciding whether to remove each edge with probability $(1-p)$ or keep it with probability $p$. Erd\"os and R\'enyi studied the size of the largest connected component in the random subgraph under the scaling $p = \lambda/n$, where $\lambda$ is a constant parameter.  They found that the size of the largest connected component is asymptotically almost surely $O(\log n)$ for $\lambda < 1$, and $cn + o(n)$ for $c = c(\lambda)>0$ when $\lambda>1$.  A sequence of events $E_n$ is said to occur asymptotically almost surely (a.a.s.) if $\prob{E_n}{}\rightarrow 1$ as $n\rightarrow \infty$.  Thus, at $\lambda = 1$, the  Erd\"os-R\'enyi model undergoes a transition from having small components to having a giant connected component.
            
Random subgraphs generated by randomly removing edges independently with probability $(1-p)$ have since been extensively studied on other graphs $G = (V,E)$, and we will refer to random subgraphs generated in this manner as random edge subgraphs.  Alternatively, random subgraphs of $G$ can be obtained by independently removing vertices (and any edges incident to those vertices) with probability $(1-p)$; we will refer to these as random site subgraphs.
            
The $n$-dimensional hypercube is an example of a graph for which both random edge and site subgraphs have been studied.  Ajtai et~al.~\cite{AKS:1981} studied random edge subgraphs of the $n$-dimensional hypercube:  $V = \{0,1\}^n$, $E = \{(x,y)\in V \times V : d(x,y) = 1 \}$, where $d(x,y)$ is the Hamming distance between $x$ and $y$ (the number of coordinates in which they differ).  For the scaling $p = \lambda/n$, they proved that all components of the random edge subgraph have $O(n)$ vertices  when $\lambda < 1$ (a.a.s.), and for $\lambda > 1$ there is a component with $c2^n + o(2^n)$ vertices (a.a.s.) for $c = c(\lambda)>0$.  Bollob\'as et. al.~\cite{BKL:1991} found similar behavior for the random site subgraph of the $n$-dimensional hypercube.  Under the same scaling, they proved that for $\lambda<1$ all components have at most $O(n)$ vertices (a.a.s.), and for $\lambda>1$ there is a component with $cn^{-1}2^n + o(n^{-1} 2^n)$ vertices for $c = c(\lambda)>0$ (a.a.s.).  Thus, both random subgraph models have the same threshold under the scaling $p = \lambda /n$ for the $n$-dimensional hypercube.  In fact, the value of $c(\lambda)$ is the same for both models, so even the proportion of vertices in the giant component above the threshold is the same.

The intuition behind each of these results is that random edge subgraphs of the hypercube and the complete graph and random site subgraphs of the hypercube look locally tree-like.  So, the connected component associated with a fixed vertex can be compared to a branching process with a Binomial$(D,p)$ offspring distribution, where $D$ is the degree of each vertex.  A giant connected component exists precisely when the branching process has positive survival probability, which is when the expected number of offspring per individual ($D p = \lambda$) exceeds $1$.  Borgs et~al.~\cite{lace1,lace2} showed that, for a wide variety of finite transitive graphs, the threshold for the existence of a giant component in the random edge subgraph should be at $D p = 1$, though they focus on behavior in the critical window, and do not give a lower bound for the size of the supercritical giant component.  The case of random site subgraphs is less clear --- just consider the complete graph.  All random site subgraphs of the complete graph are connected, thus the threshold for the existence of a giant component is $0$.  This is a trivial case, but suggests that the random edge and site subgraphs will behave differently when the edge density of $G$ is closer to that of the complete graph than to the hypercube.

Random site subgraphs are particularly relevant to problems that arise from the theory of genetic fitness landscapes.  Fitness landscapes are a tool used in theoretical evolutionary biology to model speciation under various conditions~\cite{Gav:2004}.  A fitness landscape consists of a genotype space and a function that maps each combination of genes to a fitness level in the interval $[0,1]$, which can be interpreted as the probability of an individual with that genotype surviving to reproduce.  For example, the genotype space for a haploid genome  with $n$ diallelic loci (two gene alternatives at each position along the genome) can be represented by the $n$-dimensional hypercube, where each vertex represents a genotype and edges represent single-gene mutations.  Each genotype is then assigned a fitness level of $1$ with probability $p$ or $0$ with probability $(1-p)$, independent of all other genotypes --- this is equivalent to taking a random site subgraph.  For this model, provided $n$ is sufficiently large, only a small proportion of genotypes need to be viable ($p>1/n$) for a significant number of viable genotypes to be accessible via single-gene mutations.

The random graph model that we are interested in comes from a genotype space for a haploid genome with $d$ loci, and $a_i n$ possible alleles at the $i^{th}$ locus.  Here $d$ is fixed, and the number of alleles at each locus is assumed to be large.  The resulting graph is a Hamming torus in $d$ dimensions:
\begin{align*}
	V =& \left\{ (x_1, \ldots, x_d)\in \mathbb{Z}^d : 1\leq x_i \leq a_i n, \ i = 1, \ldots, d \right\} \\
	E =&  \left\{ (x,y) \in V \times V : d(x,y) = 1 \right\},
\end{align*}
where $d(x,y)$ is the Hamming distance between $x$ and $y$.  For $d = 1$ this is the complete graph on $a_1 n$ vertices, and for $d=2$ this is the rook graph on an $a_1 n$ by $a_2 n$ chessboard.  As before, fitness levels of $1$ are assigned independently with probability $p$, and fitness of $0$ is otherwise inferred.  We say that sites with a fitness of $1$ are {\em occupied}, while those with a fitness of $0$ are {\em removed} or unoccupied.

This model is amenable to comparison with a multitype branching process.  Consider a multitype branching process with $d$ types in which an individual of type $i$ has a $\text{Binomial}(a_j n, \frac{\lambda}{n})$ number of offspring of type $j$ for $j \neq i$ and zero offspring of type $i$.  This branching process has nonzero survival probability iff the largest eigenvalue of the matrix of expected progeny, $\vect{M}_{\lambda}$, is strictly larger than one~\cite{branching}.

\begin{equation*}
\vect{M}_{\lambda} = \left( \begin{array}{ccccc}
0 & \lambda a_2 & \lambda a_3 &\cdots& \lambda a_d  \vspace{.1cm} \\
\lambda a_1 & 0 & \lambda a_3 &\cdots& \lambda a_d\\
\lambda a_1 & \lambda a_2 & 0 & & \vdots \\
\vdots & \vdots & & \ddots & \lambda a_d\\
\lambda a_1 & \lambda a_2 & \cdots & \lambda a_{d-1} & 0
\end{array} \right)
\end{equation*}
where the entry in position $(i,j)$ is the expected number of type $j$ offspring to which a type $i$ individual will give birth.  This branching process is a good local approximation for clusters in the random site subgraph of the Hamming torus when $p = \frac{\lambda}{n}$, which motivates the following theorems.

\begin{thm}
\label{thma}
Fix $d\geq 2$ and $a_1, \ldots, a_d>0$, and let $\lambda_c$ be the unique positive solution to $\det \left(\vect{M}_{\lambda_c} - \vect{I}\right) = 0$.  If $\lambda < \lambda_c$ and $p = \frac{\lambda}{n}$ then the largest connected component of the random site subgraph of the Hamming torus is a.a.s. $O( \log n)$.
\end{thm}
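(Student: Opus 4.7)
The plan is to compare the cluster containing any fixed vertex $v_0$ to a subcritical multitype branching process via breadth-first search. In the Hamming torus every edge has a well-defined \emph{direction} $i \in \{1,\ldots,d\}$, namely the unique coordinate in which its endpoints differ. I will classify each vertex reached during the BFS as \emph{type $i$} if the edge from its parent has direction $i$. A key structural fact is that if $u$ is a type-$i$ vertex, then the neighbors of $u$ along direction $i$ are exactly $u$'s parent together with the other type-$i$ children of that parent; so when enumerating new children of $u$ I only need to examine directions $j \neq i$.

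Along each such direction $j$, the potential new children of $u$ are the $a_j n - 1$ vertices sharing all but the $j$-th coordinate with $u$, and each is independently occupied with probability $p = \lambda/n$. If I ignore the constraint of excluding already-visited vertices (which can only shrink $|C(v_0)|$), then $|C(v_0)|$ is stochastically dominated by the total progeny $|T|$ of a $d$-type Galton--Watson process in which a type-$i$ individual has, independently for each $j \neq i$, a $\text{Binomial}(a_j n, \lambda/n)$ number of type-$j$ offspring. The mean offspring matrix of this process is precisely $\vect{M}_\lambda$.

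Since $\vect{M}_\lambda = \lambda \vect{M}_1$ has positive off-diagonal entries (for $d \geq 2$ and all $a_i > 0$), Perron--Frobenius provides a simple largest eigenvalue $\rho(\lambda) = \lambda \rho(1)$ that is strictly increasing in $\lambda$; the defining equation $\det(\vect{M}_{\lambda_c} - \vect{I}) = 0$ together with positivity of $\rho$ forces $\rho(\lambda_c) = 1$, so for $\lambda < \lambda_c$ we have $\rho(\lambda) < 1$ and the branching process is subcritical. Because the binomial offspring distributions have everywhere-finite MGFs, standard branching-process theory (analyzing the vector of type-wise generating functions near $1$, or equivalently bounding expected generation sizes by $\|\vect{M}_\lambda^k\| \lesssim \rho(\lambda)^k$ and invoking independence across generations) yields constants $C, \theta > 0$, depending only on $\lambda$ and the $a_i$, for which $P(|T| \geq k) \leq C e^{-\theta k}$.

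A union bound over the $|V| = (\prod_i a_i)\, n^d$ vertices then gives
$$
P\bigl(\max_{v} |C(v)| \geq K \log n\bigr) \leq |V| \cdot C e^{-\theta K \log n} = O\bigl(n^{\,d - \theta K}\bigr),
$$
which tends to $0$ for any $K > d/\theta$, completing the argument. The main obstacle is establishing the exponential tail for $|T|$ in the multitype subcritical regime uniformly in the starting type; the direction-classified BFS, stochastic domination, and union bound are straightforward once the correct accounting of offspring types is in place.
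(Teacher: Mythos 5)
Your proposal takes essentially the same route as the paper: a direction-classified BFS, stochastic domination of $|C(v_0)|$ by the total progeny of a multitype Galton--Watson process with mean matrix $\vect{M}_\lambda$, Perron--Frobenius to get a positive eigenvector with eigenvalue $\rho(\lambda)<1$, an exponential tail bound for the total progeny, and a union bound. One caveat: the paper spends real effort on what you wave off as "standard branching-process theory" --- it proves the exponential tail (Proposition~\ref{propa}) via a supermartingale $e^{\theta \ip{\vect{S}_t}{\vect{\mu}}}/\psi(\theta)^t$ and Optional Stopping, and must check that the bound on the increment MGF is uniform in $n$ (the offspring law is $\text{Binomial}(a_j n, \lambda/n)$, so one bounds $\left[1+\tfrac{\lambda}{n}(e^{\theta\mu_j}-1)\right]^{a_j n}\leq e^{\lambda a_j(e^{\theta\mu_j}-1)}$); your alternative suggestion of bounding expected generation sizes by $\rho(\lambda)^k$ and "invoking independence across generations" would only yield a first-moment (polynomial) tail, not the exponential one you assert, so the generating-function/MGF argument is the one that actually carries the load. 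Also, for $d=2$ the matrix $\vect{M}_\lambda$ is not primitive (its eigenvalues $\pm\lambda\sqrt{a_1a_2}$ have equal modulus), so the Perron--Frobenius statement needs the slight adjustment the paper makes, though this does not affect the existence of the positive eigenpair that the argument actually uses.
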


\begin{thm}
\label{thmb}
Fix $d \geq 2$ and $a_1, \ldots, a_d > 0$, and let $\lambda_c$ be the unique positive solution to $\det \left(\vect{M}_{\lambda_c} - \vect{I} \right) = 0$.  If $\lambda > \lambda_c$ and $p = \frac{\lambda}{n}$ then the size of the largest connected component of the random site subgraph of the Hamming torus is a.a.s. $(1-q) \lambda \left(\prod_i a_i \right) n^{d-1} + o (n^{d-1})$.  Furthermore, the second largest component has a.a.s. $O(\log n)$ vertices.
\end{thm}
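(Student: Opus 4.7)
I will exhibit a set $\mathcal{L} \subseteq V$ of occupied vertices whose cluster has at least $K = C \log n$ vertices, show via first and second moment methods that $|\mathcal{L}|$ concentrates around $(1-q) \lambda (\prod_i a_i) n^{d-1}$, and then argue that all vertices of $\mathcal{L}$ lie in a single component. Together these give both the stated size for the giant and the $O(\log n)$ bound on the second-largest component. Fix an occupied vertex $v$ and explore its cluster by breadth-first search, labelling each newly discovered vertex $w$ by the \emph{axis} of the edge used to reach it from its BFS-parent. So long as the explored set has size at most $K$, each axis-$j$ line contains $a_j n - O(\log n)$ unexposed vertices, while the axis-$i$ line through $w$ has already been fully exposed during the exploration of $w$'s type-$i$ parent. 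Hence the BFS couples, up to total-variation error $o(1)$, to a multitype branching process in which a type-$i$ individual produces $\text{Poisson}(\lambda a_j)$ offspring of type $j$ for each $j \neq i$ and no type-$i$ offspring---that is, the process encoded by $\vect{M}_\lambda$.

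\textbf{Size of the giant via first and second moment.} Since $\lambda > \lambda_c$, the Perron eigenvalue of $\vect{M}_\lambda$ exceeds $1$, so the branching process is supercritical with extinction probabilities $(q_1, \ldots, q_d) \in [0,1)^d$ solving $q_i = \prod_{j \neq i} \exp(\lambda a_j (q_j - 1))$. The extinction probability from the typeless root is $q := \prod_{j=1}^d \exp(\lambda a_j(q_j-1)) < 1$; this is the implicit constant of the theorem. Standard large-deviation estimates for supercritical multitype branching processes give $\prob{|\mathcal{C}(v)| \geq K}{} \to 1 - q$ for $K = C \log n$, so the first moment yields $\E|\mathcal{L}| = (1-q+o(1)) \lambda (\prod_i a_i) n^{d-1}$. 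For the second moment, I would couple two simultaneous BFS explorations from distinct vertices $v, v'$ to a pair of independent branching processes and show that the coupling fails with probability $o(1)$ while both explorations remain of size at most $K$ (the failure event being that one BFS queries a vertex already exposed by the other). This gives $\text{Cov}(\mathbf{1}_{v \in \mathcal{L}}, \mathbf{1}_{v' \in \mathcal{L}}) = o(1)$ uniformly in $v, v'$, hence $\Var|\mathcal{L}| = o((\E|\mathcal{L}|)^2)$ and Chebyshev yields concentration.

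\textbf{Uniqueness of the giant (the main obstacle).} The crux, and what I expect to be the hardest step, is showing that $\mathcal{L}$ lies in a single component. Two disjoint clusters of size $\Theta(\log n)$ need not have enough common neighbors to merge under a direct sprinkling, so I would first upgrade the dichotomy to the statement that every cluster of size $\geq K$ a.a.s.\ has size at least $M := n^{d-1}/\log n$. This requires bootstrapping the BFS--BP comparison through several stages: once the cluster has reached size $K$, restart the branching-process coupling on the frontier of unexposed neighbors and iterate, using that exponential growth of the surviving BP continues until the cluster occupies a nontrivial fraction of many axis-lines. Once clusters of size $\geq K$ are known to have size $\geq M$, I would invoke a sprinkling argument: write $p = p_1 + (1-p_1) p_2$ with $p_1 = (\lambda - \epsilon)/n$ supercritical and $p_2$ of order $\epsilon/n$, and observe that any two disjoint $p_1$-clusters $C_1, C_2$ of size $\geq M$ have $\Omega(M^2 / n^{d-1})$ common vertex-neighbors, obtained by counting pairs $(v_1, v_2) \in C_1 \times C_2$ at Hamming distance $2$ together with their mutual neighbors. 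Under $p_2$-sprinkling, at least one such neighbor is occupied with probability $1 - \exp(-\Omega(M^2/n^d))$; a union bound over pairs of large $p_1$-clusters then yields uniqueness a.a.s. The giant component thus coincides with $\mathcal{L}$, and every other component has at most $K-1 = O(\log n)$ vertices.
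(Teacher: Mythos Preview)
Your overall architecture---first and second moment to pin down the fraction $1-q$ of occupied vertices whose cluster exceeds $K=C\log n$, then a sprinkling argument to merge all such clusters---is the same as the paper's. The branching-process identification of $q$ is also correct (your formula $q=\prod_j \exp(\lambda a_j(q_j-1))$ is equivalent to the paper's $q=(\prod_i q_i)^{1/(d-1)}$). But the uniqueness step contains a genuine gap.

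You assert that two disjoint clusters $C_1,C_2$ of size $M=n^{d-1}/\log n$ have $\Omega(M^2/n^{d-1})$ common vertex-neighbors and that sprinkling with $p_2\asymp 1/n$ occupies one with probability $1-\exp(-\Omega(M^2/n^d))$. For $d=2$ this exponent is $M^2/n^2=(\log n)^{-2}\to 0$, so the merge probability tends to $0$, not $1$. No choice of $M$ rescues this in two dimensions: since there are only $O(n)$ occupied vertices in total, $M^2/n^2$ can never diverge. More broadly, the common-neighbor count $\Omega(M^2/n^{d-1})$ is unjustified for arbitrary sets $C_1,C_2$; a cluster concentrated in few lines has a much smaller neighborhood than its cardinality suggests.

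The paper's remedy has two ingredients that your sketch lacks. First, while running the lower-bounding branching-process coupling (only up to time $r=n^{d-4/3}$, not $n^{d-1}/\log n$), it proves a \emph{plane bound} (Lemma~\ref{planeBoundLemma}): no $2$-dimensional coordinate plane accumulates more than $O(n^{2/3}(\log n)^{d-2})$ discovered vertices. This both keeps the lower coupling valid and, crucially, controls the projections of $\Upsilon_{1},\Upsilon_{2}$ onto lower-dimensional coordinate subspaces. Second, the merging lemma (Lemma~\ref{clusterMergeLemma}) does not count common neighbors in one shot. Instead it splits $p_2$ into $2d-4$ rounds and sprinkles \emph{directionally}: spray each cluster along $\vect{e}_d$, then along $\vect{e}_{d-1}$, and so on, using the plane bound and pigeonhole at each stage to guarantee a large projection into every lower-dimensional sub-torus. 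Only inside the final $3$-dimensional (for $d=2$, $2$-dimensional) slices is the merge completed, by a two-directional argument that exploits these projection lower bounds rather than raw cardinality. This geometric control over how the clusters spread across coordinate planes is the essential missing idea.
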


\noindent For Theorem~\ref{thmb}, $(1-q)>0$ is defined precisely in Section~\ref{section_supercritical4}, and in both theorems $\vect{I}$ is the $d\times d$ identity matrix.  Note that the total number of vertices in the random site subgraph of the Hamming torus is asymptotically the expected number, $p \abs{V} = \lambda \left(\prod_i a_i \right) n^{d-1}$, so $(1-q)$ is the proportion of vertices remaining that are in the giant component.  The critical value, $\lambda_c$, can be defined equivalently as the root of a degree $d$ polynomial due to the following lemma.

\begin{lem}
\label{polynomialLemma}
\begin{equation*}
\det(\vect{M}_{\lambda} - \vect{I}) = (-1)^d \left[ 1 - \sum_{\ell=2}^{d} (\ell-1) \lambda^{\ell} \mathop{\sum_{S \subset \{1, \ldots, d\} }}_{\abs{S} = \ell} \ \prod_{i \in S} a_{i} \right].
\end{equation*}
\end{lem}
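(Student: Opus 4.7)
The plan is to recognize $\vect{M}_\lambda - \vect{I}$ as a rank-one perturbation of a diagonal matrix and apply the matrix determinant lemma. Writing $\vect{a} = (a_1,\ldots,a_d)^T$ and $\vect{1} = (1,\ldots,1)^T$, I observe that
\begin{equation*}
\vect{M}_\lambda = \lambda\,\vect{1}\vect{a}^T - \lambda\,\mathrm{diag}(a_1,\ldots,a_d),
\end{equation*}
since this formula gives $\lambda a_j$ in off-diagonal entry $(i,j)$ and $\lambda a_i - \lambda a_i = 0$ on the diagonal. Consequently
\begin{equation*}
\vect{M}_\lambda - \vect{I} = \lambda\,\vect{1}\vect{a}^T - \vect{D}, \qquad \vect{D} := \mathrm{diag}(1+\lambda a_1,\ldots,1+\lambda a_d).
\end{equation*}

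Next, I apply the matrix determinant lemma to the rank-one update, obtaining
\begin{equation*}
\det(\vect{M}_\lambda - \vect{I}) = \det(-\vect{D})\left(1 + \lambda\,\vect{a}^T(-\vect{D})^{-1}\vect{1}\right) = (-1)^d \prod_{i=1}^d (1+\lambda a_i)\left[1 - \sum_{i=1}^d \frac{\lambda a_i}{1+\lambda a_i}\right].
\end{equation*}

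The remaining task is purely algebraic: show that the bracketed quantity, once multiplied by $\prod_i(1+\lambda a_i)$, equals $1 - \sum_{\ell=2}^d (\ell-1)\lambda^\ell \sum_{|S|=\ell}\prod_{i\in S}a_i$. I would rewrite the product minus the sum as
\begin{equation*}
\prod_{i=1}^d (1+\lambda a_i) - \sum_{i=1}^d \lambda a_i \prod_{j\neq i}(1+\lambda a_j),
\end{equation*}
expand each product over subsets $S\subset\{1,\ldots,d\}$, and collect terms by $|S|$. The first product contributes $\sum_S \prod_{i\in S}(\lambda a_i)$, while the subtracted sum, after re-indexing $T = S\cup\{i\}$, contributes $\sum_T |T|\prod_{i\in T}(\lambda a_i)$. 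Combining these gives $\sum_T (1-|T|)\prod_{i\in T}(\lambda a_i)$, in which the $|T|=0$ term is $1$ and the $|T|=1$ terms vanish, yielding exactly the desired expression.

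No step here is a real obstacle; the only potential pitfall is a sign error in the matrix determinant lemma (since we factor out $-\vect{D}$ rather than $\vect{D}$) and keeping careful track of the subset re-indexing in the final algebraic simplification. Once those bookkeeping checks are in place, the identity follows immediately.
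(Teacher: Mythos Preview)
Your proof is correct and takes a genuinely different route from the paper's argument.

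The paper proceeds by induction on $d$: after the substitution $\gamma = 1/\lambda$, it differentiates the determinant with respect to $\gamma$ (using multilinearity row by row) to reduce to the $(d-1)$-dimensional case, and then checks the constant of integration by explicitly factoring the matrix at $\gamma = 0$ into four triangular matrices. Your approach instead recognizes $\vect{M}_\lambda - \vect{I}$ as a rank-one perturbation of the invertible diagonal matrix $-\vect{D} = -\mathrm{diag}(1+\lambda a_1,\ldots,1+\lambda a_d)$ and applies the matrix determinant lemma in one stroke, then finishes with a short elementary-symmetric-function computation.

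Your argument is shorter and arguably more transparent, since it exploits the structural fact that $\vect{M}_\lambda$ differs from a rank-one matrix only on the diagonal; the paper's inductive argument, by contrast, never uses this observation and instead relies on the somewhat ad hoc triangular factorization to pin down the value at $\gamma = 0$. One small point worth making explicit in your write-up: the matrix determinant lemma requires $-\vect{D}$ to be invertible, i.e.\ $\lambda \neq -1/a_i$ for each $i$; since both sides of the claimed identity are polynomials in $\lambda$, agreement on the complement of this finite set extends to all $\lambda$. With that one-line remark added, your proof is complete.
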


Random {\em edge} subgraphs of the Hamming torus were considered by Borgs et~al.~\cite{lace1,lace2}, who indicated that the threshold for the emergence of a giant component in the random edge subgraph should be $1 / (a_1 + \cdots + a_d)$, though they do not provide a lower bound for the size of the largest component above the threshold.  This lower bound was later proven by van der Hofstad and Luczak~\cite{HL:2009} in the case $d=2$ and $a_1 = a_2=1$, demonstrating that the threshold occurs at $1/2$. When $d = 2$, the threshold for the random site subgraph is $\lambda_c =1/\sqrt{a_1 a_2}$, so the two processes clearly differ.  

Multitype branching processes have also been employed in the analysis of {\em inhomogeneous} random edge subgraphs of the complete graph by Bollob\'as et al. \cite{BJR:2007}.  In the inhomogeneous model, the probability of retaining the edge between vertices $i$ and $j$ is $p_{ij}$, and the $p_{ij}$'s may not be equal, however the inclusion or exclusion of each edge still occurs independently of all other edges.  Since the number of neighbors that a vertex has depends on the probabilities of the edges incident to that vertex, the inhomogeneous model looks locally like a multitype branching process --- each vertex clearly has a different offspring distribution, and is thus of a different ``type''.  Bollob\'as et al. \cite{BJR:2007} proved that, under certain conditions on the values of the $p_{ij}$'s (which exclude the case of the Hamming torus), a giant component exists precisely when the corresponding multitype branching process survives.  In the case of random site subgraphs of the Hamming torus, it is somewhat surprising that a connection to multitype branching processes arises, since each vertex sees the same distribution of neighbors.  In this model, the similarity to a multitype branching process is due to the dependencies between edges, and not the inhomogeneity of the edge probabilities as in~\cite{BJR:2007}.

In Section~\ref{section_subcritical} we prove Theorem~\ref{thma} by coupling a process of revealing the vertices in a connected component of the random site subgraph with a multitype branching process.  In Section~\ref{section_supercritical} we prove Theorem~\ref{thmb} in four steps.  The first step, which is in Section~\ref{section_supercritical2}, is to show that the process of revealing the vertices in a connected component of the random site subgraph will either terminate before discovering $O(\log n)$ occupied vertices, or will reach size $m = \Theta(\log n)$ with high probability.  The second step, in Section~\ref{section_supercritical1}, is to show that if the process of revealing vertices reaches size $m$, then it can be coupled with a lower bounding branching process, which will reach size $n^{d-4/3}$ with high probability.  The third step, in Section~\ref{section_supercritical3}, is to show that any two component-discovering processes (started from two different vertices in $V$) that reach size $n^{d-4/3}$ will join together with high probability.  The final step, in Section~\ref{section_supercritical4}, is to show that the proportion of vertices in components of size $O(\log n)$ converges to $q$ in probability.  The reason we reverse the order in which the first two steps are presented is because $m$ needs to be defined before proving the first step, but this definition is motivated by the second step.  In Section~\ref{section_connectivity} we will prove the following three theorems regarding the connectivity of the random site subgraph.

\begin{thm}
\label{thm-discon}
Let $c < \frac{d-1}{\sum a_i}$.  If $p = p(n) \leq \frac{c \log n}{n}$ and $p = \omega(n^{-d})$ then the random site subgraph of the Hamming torus contains isolated vertices, and is thus not connected (a.a.s.).
\end{thm}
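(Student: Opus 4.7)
My plan is to apply the second moment method to the count $X$ of isolated occupied vertices, where a vertex is isolated if it is occupied while all $N := \sum_i(a_i n - 1)$ of its neighbors in the Hamming torus are unoccupied. Since an isolated occupied vertex forms its own connected component, $X \ge 2$ a.a.s.\ immediately forces disconnectedness.

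First I would check $\E[X] = |V|\,p\,(1-p)^N \to \infty$. The estimate $(1-p)^N \ge \exp\{-pN(1+o(1))\} \ge n^{-c\sum a_i - o(1)}$ combined with $p\le c\log n/n$ handles the regime where $p$ is of order $\log n/n$: the strict inequality $c\sum a_i < d-1$ then gives $\E[X] \ge (\text{const})\cdot n^{d-1-c\sum a_i}\log n \to \infty$. In the complementary regime $p \ll \log n/n$ we have $(1-p)^N = 1-o(1)$, so $\E[X]\sim |V|p = \omega(1)$ by the assumption $p=\omega(n^{-d})$. These two cases together cover the full admissible range of $p$.

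For the second moment, I split $\E[X^2] = \sum_{u,v}\mathbb{P}(u,v\text{ both isolated})$ according to the Hamming distance $k = d(u,v)$. Pairs at $k=0$ contribute $\E[X]$; pairs at $k=1$ contribute $0$, since adjacent occupied vertices cannot both be isolated; pairs at $k=2$ share exactly two common neighbors, so each contributes $p^2(1-p)^{2N-2}$ and there are $|V|\sum_{i<j}(a_i n-1)(a_j n-1)$ such ordered pairs; pairs at $k\ge 3$ have disjoint closed neighborhoods, so each contributes exactly $p^2(1-p)^{2N}$. Dividing by $\E[X]^2 = |V|^2 p^2(1-p)^{2N}$, the $k\ge 3$ contribution sums to $1 - O(n^{-(d-2)})$ and the $k=2$ contribution is $\Theta(n^{-(d-2)})$; in dimension $d\ge 3$ these combine to $1+o(1)$, and in dimension $d=2$ the $k=2$ term alone equals $1+o(1)$ while no pairs at $k\ge 3$ exist. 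In every case $\E[X^2] = (1+o(1))\E[X]^2$, so Chebyshev's inequality yields $X\to\infty$ in probability.

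The main technical obstacle I anticipate is the first-moment check across the full admissible range $\omega(n^{-d}) \le p \le c\log n/n$: the factor $(1-p)^N$ transitions from $1-o(1)$ at the lower extreme to polynomially small in $n$ at the upper extreme, so one must separately verify $\E[X]\to\infty$ in each regime rather than with a single uniform estimate. The second-moment analysis is otherwise routine once the Hamming-distance case split is made, with only the minor wrinkle that dimensions $d=2$ and $d\ge 3$ need slightly different accounting because distance $\ge 3$ is impossible when $d=2$.
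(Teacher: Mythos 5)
Your proposal follows the same strategy as the paper: count isolated occupied vertices, show the first moment diverges, split the second moment by Hamming distance ($0$, $1$, $2$, $\geq 3$) using that $k=1$ pairs contribute nothing, $k=2$ pairs share exactly two common neighbors, and $k\geq 3$ pairs have disjoint closed neighborhoods, then conclude by Chebyshev. The one place where your reasoning as written does not quite go through is the boundary of the first-moment case split: the assertion that $(1-p)^N = 1-o(1)$ for all $p \ll \log n/n$ fails for $n^{-1} \lesssim p \ll \log n/n$ (take $p = n^{-1}\sqrt{\log n}$; then $pN \to \infty$, so $(1-p)^N \to 0$). The correct split point is $p \asymp n^{-1}$, which is what the paper uses: for $p \geq n^{-1}$ your first estimate already gives $\E X \gae n^{d} \cdot n^{-1}\cdot n^{-c\sum a_i - o(1)} \to \infty$ without needing $p \gtrsim \log n / n$; for $p < n^{-1}$, $(1-p)^N$ is bounded below by a positive constant and $\E X \gae n^d p = \omega(1)$. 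This is a trivial repair. It is worth noting that your formulation of the second-moment conclusion as $\E X^2 = (1+o(1))(\E X)^2$ is a little cleaner than the paper's, which instead bounds $\Var(I_n) = O(\log^2 n)\,\E I_n$ and then invokes $\E I_n \gg n^\epsilon$ to make Chebyshev bite; your version needs only $\E X \to \infty$, which follows directly from $p = \omega(n^{-d})$.
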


\begin{thm}
\label{thm-con}
Let $c > \frac{d-1}{\sum a_i}$.  If $p = p(n) \geq \frac{c \log n}{n}$ then the random site subgraph of the Hamming torus is connected (a.a.s.).
\end{thm}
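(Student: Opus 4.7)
The plan is to prove Theorem \ref{thm-con} via a first-moment union bound over connected-component sizes. Since any disconnected graph contains a component of size at most $|V|/2$, it suffices to establish $\sum_{k=1}^{\lfloor |V|/2 \rfloor} \E[X_k] = o(1)$, where $X_k$ denotes the number of connected components of size exactly $k$ in the random site subgraph.

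The $k = 1$ case is the isolated-vertex calculation: with common degree $D = n \sum a_i - d$,
\[
\E[X_1] \;=\; |V|\, p\, (1-p)^{D} \;\sim\; c \Bigl(\prod_i a_i\Bigr)\, n^{d - 1 - c \sum a_i} \log n,
\]
which vanishes precisely when $c > (d-1)/\sum a_i$, the theorem's hypothesis. For $k \geq 2$, I would bound $\E[X_k] \leq N_k\, p^k\, (1 - p)^{|\partial C|_{\min}}$, where $N_k \leq |V|\,(eD)^{k-1}$ comes from the standard spanning-tree enumeration of connected $k$-subgraphs in a graph of max degree $D$, and $|\partial C|$ is the external vertex-boundary of a connected set $C$ of size $k$.

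For $k$ up to $k_0 = O(\log n / \log \log n)$, the elementary estimate $|\partial C| \geq D - (k-1)$ (count the neighbors of any single vertex of $C$ outside $C$) already suffices: substituting $p = c\log n/n$ yields $\E[X_k] \leq n^{d-1-c\sum a_i}\,(\mathrm{const}\cdot \log n)^k$, and the polynomial decay $n^{d-1-c\sum a_i}$ dominates the quasi-polynomial factor $(\log n)^k$ throughout this range, so the partial sum is $o(1)$. For $k > k_0$, the single-vertex bound is too weak to counteract $(\log n)^k$, so I would invoke a sharper isoperimetric inequality for the Hamming torus of the form $|\partial C| \geq c_d\, k^{(d-1)/d}\, n$ for $k$ up to order $n^{d-1}$. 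This can be obtained by exploiting the product structure: the boundary contribution from direction $i$ is exactly $a_i n\, |L_i(C)| - k$, where $L_i(C)$ denotes the collection of $i$-coordinate lines meeting $C$, and summing over $i$ with careful bookkeeping of overlaps (vertices lying on $i$-lines through two different $v, v' \in C$) delivers the refinement. This makes $\E[X_k]$ super-polynomially small for medium $k$; for $k$ close to $|V|/2$, the boundary is of order $|V|$ by an expander-type estimate, trivially killing the first moment.

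The main obstacle is establishing the sharper isoperimetric inequality in the medium-$k$ regime. The single-vertex bound already delivers the correct threshold $c = (d-1)/\sum a_i$ for any fixed $k$, so connectivity at fixed component sizes is governed by the same threshold as no isolated vertices; but controlling the \emph{uniform} sum over all $k \leq |V|/2$ requires a boundary lower bound that grows sufficiently with $k$, and the crux of the argument is identifying the right growth rate (with the right constants) so that the first-moment sum remains $o(1)$ under only the stated hypothesis $c > (d-1)/\sum a_i$.
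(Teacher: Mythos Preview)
Your first-moment approach has a genuine gap in the medium-$k$ range, and the difficulty is not merely a matter of sharpening constants. The isoperimetric inequality you propose, $|\partial C|\ge c_d\,n\,k^{(d-1)/d}$, is essentially tight (a box $m_1\times\cdots\times m_d$ with $\prod m_i=k$ is connected in the Hamming torus and achieves it), so it cannot be improved. But it is too weak to beat the spanning-tree enumeration once $k$ exceeds roughly $(\log n/\log\log n)^d$. Concretely, with $N_k\le |V|(eD)^{k-1}$ and $p=c\log n/n$ you get
\[
\E[X_k]\;\lesssim\; n^{d-1}\,(C\log n)^k\,\exp\bigl(-c\,c_d\,k^{(d-1)/d}\log n\bigr),
\]
and the exponent $k\log\log n$ from the enumeration dominates $k^{(d-1)/d}\log n$ as soon as $k^{1/d}\gg \log n/\log\log n$. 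For $d=2$ and $k=(\log n)^3$, say, the bound blows up. Replacing the spanning-tree count by $\binom{|V|}{k}$ does not help for $k$ of order $n^{d-1}$: the entropy $k\log(n^d/k)\sim k\log n$ still swamps $k^{(d-1)/d}\log n$. The ``expander-type estimate'' you mention for $k$ near $|V|/2$ does not bridge this gap either, since the problematic range starts at $k\sim(\log n)^d$ and extends all the way to $k\sim n^{d-1}\log n$ (the total number of occupied vertices).

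The paper avoids this obstacle entirely by not enumerating components. Instead it recycles the supercritical machinery from Theorem~\ref{thmb}: it shows that when $c>(d-1)/\sum a_i$, every occupied vertex has degree at least some fixed constant $\ell$ (a one-line union bound), hence has $\Theta(\log n)$ occupied vertices at distance two, hence its cluster can be fed into the lower-bounding branching process of Section~\ref{section_supercritical1} and grown to size $n^{d-4/3}$, after which Lemma~\ref{clusterMergeLemma} merges everything. The point is that once a cluster is even moderately large, the branching-process coupling takes over and drives it to the giant; no isoperimetric control on arbitrary connected sets is needed. Your approach would work if the second-largest-component bound from Theorem~\ref{thmb} were available at $p=c\log n/n$ (then you would only need the first moment for $k=O(\log n)$, which you handle), but that bound is not monotone in $p$ and requires exactly the kind of argument the paper gives.
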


\begin{thm}
\label{thm-isocon}
Fix $a_1 \geq a_2 \geq \cdots \geq a_d$, and let $c > \frac{d-1}{2\sum_{i=2}^d a_i + a_1 }$.  If $p = \frac{c \log n}{n}$ then every vertex in the random site subgraph of the Hamming torus is either isolated or belongs to the giant component~(a.a.s.).
\end{thm}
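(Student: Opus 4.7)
The plan is to combine a first moment computation with a monotone coupling to Theorem~\ref{thmb}. I would fix any $\lambda > \lambda_c$ and couple the random site subgraphs at all densities via iid $\mathrm{Uniform}(0,1)$ variables $\{U_v\}_{v\in V}$, setting $V_p := \{v : U_v \leq p\}$. Theorem~\ref{thmb} applied at $p_1 := \lambda/n$ then produces a unique giant $C_1 \subseteq V_{p_1}$ of size $\Theta(n^{d-1})$ with second-largest component $O(\log n)$ a.a.s. For $n$ large enough that $p_1 \leq p := c\log n/n$, we have $V_{p_1} \subseteq V_p$, so $C_1$ remains connected in the $p$-subgraph and its $p$-component $\mathcal{C}^* \supseteq C_1$ is the giant at $p$. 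This puts in place the giant to which every non-isolated vertex must attach.

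The core estimate is a first moment bound on $N_k$, the number of components of size exactly $k$, for each $k \geq 2$:
\[
\mathbb{E}[N_k] = \sum_{\substack{S \subseteq V,\, |S| = k,\\ S\text{ connected}}} p^k(1-p)^{|\partial S|}.
\]
I would begin with the tight case $k = 2$. A pair $\{u, v\}$ adjacent along coordinate $j$ satisfies $|\partial\{u,v\}| = (2\sum_i a_i - a_j)n - 2d$, since $u$ and $v$ share exactly $a_j n - 2$ common neighbors (those along their shared $j$-line) and each has $\sum_i(a_i n - 1)$ neighbors. This is minimized at $j = 1$ (the largest coordinate), giving $(a_1 + 2\sum_{i\geq 2}a_i)n - 2d$. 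Counting the $\sim \tfrac12 a_1\prod_i a_i\cdot n^{d+1}$ dim-$1$ pairs yields
\[
\mathbb{E}[N_2] = \Theta(\log^2 n)\cdot n^{d - 1 - c(a_1 + 2\sum_{i\geq 2}a_i)},
\]
which is $o(1)$ exactly under the hypothesis; pairs along other directions contribute lower-order terms.

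For $k \geq 3$ I would parameterize connected $k$-subsets by their projection counts $N_j := \#\{\text{coordinate-}j\text{ lines meeting }S\}$ and case-analyze the dominant shapes. Rows of length $k$ along dimension $1$, for which $|\partial S| = (a_1 + k\sum_{i\geq 2}a_i)n - O(k)$, contribute $\Theta(\log^k n)\cdot n^{d-1 - c(a_1 + k\sum_{i\geq 2}a_i)}$, which is $o(1)$ under the $k=2$ threshold because each extra row vertex deducts a full $c\sum_{i\geq 2}a_i\log n$ from the exponent, far outpacing the $\log n$ count gain. Other configurations (higher-dimensional sub-boxes with boundary $|S|\sum_j(a_j n/k_j - 1)$, L-shapes, etc.) gain more vertices at the cost of fewer boundary vertices per vertex, but an explicit check in each case shows the expected count decays polynomially in $n$; summing over $2 \leq k \leq K := (\log n)^2$ keeps the total $o(1)$. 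For $k > K$, I would invoke the coupling: any component $D$ disjoint from $\mathcal{C}^*$ forces $D \cap V_{p_1}$ into non-giant $p_1$-components (each of size $O(\log n)$ by Theorem~\ref{thmb}), while a vertex-isoperimetric bound on the Hamming torus gives $|\partial D| = \Omega(n)$ whenever $2 \leq |D| \leq |V|/2$, so $(1-p)^{|\partial D|}$ is super-polynomially small and a union bound closes the argument.

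The main obstacle will be the uniform isoperimetric control for intermediate $k$: the extremal shape for $|\partial S|$ depends on $k$ and on the $a_i$'s (for instance, when $d = 2$ and $a_1 = a_2$, a $\sqrt{k}\times\sqrt{k}$ box has smaller total boundary than a row of length $k$), so the case analysis for $k \geq 3$ is genuinely non-trivial and one must verify that the expected-count saving from a larger $p^k$ factor is outweighed by the boundary loss for every plausible shape. The payoff is that the binding constraint turns out to be the dim-$1$ pair at $k = 2$, which gives exactly the threshold $c > (d-1)/(a_1 + 2\sum_{i\geq 2}a_i)$ appearing in the theorem.
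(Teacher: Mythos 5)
Your overall strategy (first moment bound over small connected sets, combined with a monotone coupling to the giant at density $\lambda/n$) is genuinely different from the paper's argument, which does a \emph{per-vertex} local exploration: for each occupied $\vect{v}$, the paper shows that either $d_{\vect{v}} \leq 0$, or (with probability $1-o(n^{-d})$) a three-step BFS from $\vect{v}$ yields $m = \Theta(\log n)$ active vertices, at which point the lower-bounding branching process from Section~\ref{section_supercritical1} and the cluster-merging Lemma~\ref{clusterMergeLemma} pull $\vect{v}$ into the giant. The binding event in the paper's proof, $1 \leq d_{\vect{v}} < \ell$ together with $\abs{A_2} < \ell$, produces precisely the $(1-p)^{(a_1 + 2\sum_{i\geq 2} a_i)n + O(1)}$ factor you isolate with your dimension-$1$ pair at $k=2$, so both routes hit the same threshold $c > (d-1)/(a_1 + 2\sum_{i\geq 2}a_i)$. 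The advantage of the per-vertex route is that it union-bounds only over the $\Theta(n^d)$ vertices and never has to enumerate connected $k$-subsets for $k\geq 3$; your route, if completed, would give finer information (explicit asymptotics for $\mathbb{E}[N_k]$) but at the cost of a shape-by-shape isoperimetric analysis.

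There are two genuine gaps. The first you already flag: for $3 \leq k \leq K$ you would have to rule out all shapes of connected $k$-sets, and the extremal shape genuinely depends on $k$, $d$, and the $a_i$ (boxes beat rows once $a_1 < 2a_2$, for instance), so "the binding constraint is $k=2$'' is a claim requiring real work. The second gap, in the $k > K$ regime, is more serious as written: the bound $\abs{\partial D} = \Omega(n)$ gives only $(1-p)^{\abs{\partial D}} = n^{-\Omega(c)}$, which is \emph{polynomially}, not super-polynomially, small, and the number of connected $k$-subsets containing a fixed vertex grows like $(Cn)^{k-1}$, so a union bound over sets of size $k \gtrsim (\log n)^2$ cannot possibly close with only an $\Omega(n)$ isoperimetric input. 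What you actually need is a bound that scales with $k$ (e.g. $\abs{\partial S} \gtrsim k^{(d-1)/d} n$ for box-like sets, $\gtrsim k n$ for row-like sets) together with a sharper entropy bound on the number of connected $k$-sets achieving that boundary, and the interaction between the two is exactly the delicate part your sketch defers. The appeal to Theorem~\ref{thmb} in this regime also doesn't obviously help: a $p$-component $D$ of size $k > K$ disjoint from $\mathcal{C}^*$ could have $D \cap V_{p_1} = \emptyset$ entirely, so "$D \cap V_{p_1}$ lies in small $p_1$-components'' does not bound $\abs{D}$. Either make the isoperimetric input quantitative in $k$ and pair it with a careful enumeration, or adopt the paper's local-exploration architecture, which sidesteps the enumeration altogether.
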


These three theorems together imply that, with probability approaching one, the random site subgraph of the Hamming torus is connected if and only if it contains no isolated vertices (except in the trivial case $p \asymp n^{-d}$, where the subgraph may consist of just a single occupied vertex with positive probability).

\section{Subcritical behavior}
\label{section_subcritical}

Theorem~\ref{thma} comes from a direct comparison with the binomial multitype branching process described above, and thus follows from Proposition~\ref{propa} below, which bounds the total size of a multitype branching process.  Consider a multitype branching process with $d$ types, and let $\vect{Z}_0, \vect{Z}_1, \vect{Z}_2,\ldots$ be the generations of this process, where $\vect{Z}_t$ is a vector whose $i^{\text{th}}$ component, $Z_t^i$, is the number of type $i$ individuals in the $t^{\text{th}}$ generation.  $\vect{Z}_0$ is assumed to be deterministic, and all birth events are independent of one another.  Let $\vect{M} = (m_{ij})$ denote the matrix of expectations:
\begin{equation*}
m_{ij} = \E\left(Z_1^j \ \middle | \ \vect{Z_0} = \vect{e_i}\right).
\end{equation*}
We say that a matrix, $\vect{A}$, or a vector, $\vect{x}$, is {\em positive} if $A_{ij} > 0$ for all $i$ and $j$ or $x_i>0$ for all $i$.  Lemma~\ref{pflemma} is a simplified restatement of the Perron-Frobenius Theorem as it appears in~\cite{branching}.
\begin{lem}
\label{pflemma}
If $\vect{M}^N$ is positive for some natural number $N$ then $\vect{M}$ has a positive simple eigenvalue, $\rho$, that is greater in absolute value than any other eigenvalue, and $\rho$ corresponds to a positive right eigenvector $\vect{\mu}:\vect{M \mu} = \rho \vect{\mu}$.
\end{lem}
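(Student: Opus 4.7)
The plan is to deduce the lemma from the standard Perron--Frobenius theorem for primitive nonnegative matrices. Since $\vect{M}$ is a matrix of expectations, all entries are nonnegative, and the hypothesis that $\vect{M}^N > 0$ for some $N$ is precisely the primitivity assumption. I sketch the three main ingredients I would combine.

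For existence of a positive eigenpair, I would use a fixed-point argument on the standard simplex $\Delta = \{\vect{x} \in \mathbb{R}^d : x_i \geq 0,\ \sum_i x_i = 1\}$. For any $\vect{x} \in \Delta$ one has $\vect{M}^N \vect{x} > 0$, and in particular $\vect{M}\vect{x} \neq 0$, so the normalization $f(\vect{x}) = \vect{M}\vect{x}/\norm{\vect{M}\vect{x}}_1$ is a continuous self-map of $\Delta$. Brouwer's fixed-point theorem yields $\vect{\mu} \in \Delta$ with $\vect{M}\vect{\mu} = \rho \vect{\mu}$ for some $\rho > 0$; strict positivity of $\vect{\mu}$ then follows from $\vect{M}^N \vect{\mu} = \rho^N \vect{\mu}$, whose left side is strictly positive because $\vect{M}^N > 0$ and $\vect{\mu}$ is nonzero and nonnegative. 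Applying the same argument to $\vect{M}^T$ (which is also primitive, since $(\vect{M}^T)^N = (\vect{M}^N)^T > 0$) yields a positive left eigenvector $\vect{\ell}$; pairing $\vect{\ell}$ with $\vect{\mu}$ shows its associated eigenvalue is also $\rho$.

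For the eigenvalue bound and simplicity, consider any eigenpair $\vect{M}\vect{\nu} = \sigma\vect{\nu}$ with $\vect{\nu} \in \mathbb{C}^d$. Taking componentwise absolute values and applying the triangle inequality gives $|\sigma|\, \abs{\vect{\nu}} \leq \vect{M}\, \abs{\vect{\nu}}$ coordinatewise, and pairing with $\vect{\ell}$ yields $|\sigma| \leq \rho$. Simplicity of $\rho$ follows because any nonzero nonnegative $\rho$-eigenvector must be strictly positive by the argument above: if $\vect{\nu}$ were a real $\rho$-eigenvector linearly independent of $\vect{\mu}$, then for the unique $t^* \in \mathbb{R}$ at which $\vect{\mu} - t^*\vect{\nu}$ first acquires a zero coordinate, this vector would be a nonzero, nonnegative, non-strictly-positive $\rho$-eigenvector, a contradiction.

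The main obstacle is the strict dominance $|\sigma| < \rho$ when $\sigma \neq \rho$: the bound $|\sigma|\, \abs{\vect{\nu}} \leq \vect{M}\, \abs{\vect{\nu}}$ alone only gives $|\sigma| \leq \rho$, and in the merely irreducible case equality can hold for $\sigma = \rho e^{2\pi i k / h}$ where $h$ is the period. To rule these out I would iterate to $|\sigma|^N \abs{\vect{\nu}} \leq \vect{M}^N \abs{\vect{\nu}}$ and use $\vect{M}^N > 0$ together with the equality condition in the triangle inequality (all summands in each row must share a common argument) to force $\vect{\nu}$ to be a complex scalar multiple of $\vect{\mu}$, and hence $\sigma = \rho$. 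This step is precisely where primitivity, rather than mere irreducibility, is genuinely used.
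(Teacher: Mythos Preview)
The paper does not prove this lemma at all: it is introduced as ``a simplified restatement of the Perron--Frobenius Theorem as it appears in~\cite{branching}'' and is used as a black box. Your proposal therefore supplies considerably more than the paper does, and what you outline is a standard and essentially correct route to Perron--Frobenius for primitive matrices (Brouwer on the simplex for existence, positivity via $\vect{M}^N\vect{\mu}=\rho^N\vect{\mu}>0$, the left eigenvector for the bound $|\sigma|\le\rho$, and the equality case in the triangle inequality together with $\vect{M}^N>0$ for strict dominance).

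One small gap worth flagging: your simplicity argument, as written, establishes only \emph{geometric} simplicity of $\rho$ (the $\rho$-eigenspace is one-dimensional), whereas ``simple eigenvalue'' in the lemma means algebraic simplicity. This is easily patched with the ingredients you already have: if $(\vect{M}-\rho\vect{I})\vect{w}=\vect{\mu}$ were solvable, pairing with the positive left eigenvector $\vect{\ell}$ would give $0=\ip{(\vect{M}^T-\rho\vect{I})\vect{\ell}}{\vect{w}}=\ip{\vect{\ell}}{\vect{\mu}}>0$, a contradiction. For the paper's purposes this distinction is in any case immaterial, since only the existence of $\rho>0$ and a positive $\vect{\mu}$ are ever used.
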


We wish to state Proposition~\ref{propa} in some generality, but we will only be applying it to the multitype branching process with expectation matrix $\vect{M}_{\lambda}$.   When $d \geq 3$, it is easy to check that $\vect{M}_\lambda^2$ is positive, and when $d=2$ it can be verified directly that $\vect{M}_\lambda$ has a unique positive eigenvalue corresponding to a positive right eigenvector.

\begin{prop}
\label{propa}
If $\vect{M \mu} = \rho \vect{\mu}$ where $0 < \rho < 1$ and $\vect{\mu}$ is positive, and there exists $\theta_0 >0$ such that $\E \left[ e^{\theta_0 \ip{\vect{Z_1}}{\vect{\mu} }} \middle | \vect{Z_0} = \vect{e_i}\right] < \infty$ for $i = 1, \ldots, d$, then $\prob{\sum_{t=0}^\infty \norm{\vect{Z}_t}_1 > x}{} \leq C e^{-\alpha x}$ where $\alpha, C > 0$ can be chosen as follows:
\begin{align*}
e^{-\alpha} := \min_{\theta \in [0, \theta_0]} \max_i \ \E \left[ e^{\theta \ip{\vect{Z_1} - \vect{e_i}}{\vect {\mu}} } \middle | \vect{Z_0} = \vect{e_i}  \right],
\end{align*}
and, letting $\theta'$ be the value of $\theta$ for which this minimum is attained, $C := \ e^{\theta' \ip{\vect{Z_0}}{\mu}}$.
\end{prop}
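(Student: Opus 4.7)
The plan is to prove the tail bound by running a queue-based exploration of the branching tree and analysing an exponential supermartingale built from the $\vect{\mu}$-weighted queue size. Enumerate the individuals of the process as $I_1,I_2,\ldots$ in the order they are removed from a BFS queue initialised with the $\|\vect{Z}_0\|_1$ generation-zero individuals; let $\tau_k\in\{1,\ldots,d\}$ be the type of $I_k$ and let $\vect{B}^{(k)}$ be its offspring vector (distributed as $\vect{Z}_1$ under $\vect{Z}_0=\vect{e}_{\tau_k}$, independently of the past). Write $N:=\sum_{t\geq 0}\|\vect{Z}_t\|_1$ for the total progeny, so that the tail to bound is $\prob{N>x}{}$. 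Introduce the $\vect{\mu}$-weighted queue content
\begin{equation*}
Q_k \;:=\; \ip{\vect{Z}_0}{\vect{\mu}} + \sum_{j=1}^{k} D_j, \qquad D_j := \ip{\vect{B}^{(j)} - \vect{e}_{\tau_j}}{\vect{\mu}}.
\end{equation*}
Positivity of $\vect{\mu}$ forces $Q_k>0$ for $k<N$ and $Q_N=0$, so $N$ is the first time $Q$ hits the origin.

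Let $\theta'\in[0,\theta_0]$ be the minimiser in the definition of $e^{-\alpha}$. Conditional on $\mathcal{F}_{k-1}$ and $\tau_k=i$, the increment $D_k$ is distributed as $\ip{\vect{Z}_1-\vect{e}_i}{\vect{\mu}}$ under $\vect{Z}_0=\vect{e}_i$, so by the definition of $\alpha$,
\begin{equation*}
\E\!\left[e^{\theta' D_k}\,\middle|\,\mathcal{F}_{k-1}\right] \;\leq\; \max_i \E\!\left[e^{\theta'\ip{\vect{Z}_1-\vect{e}_i}{\vect{\mu}}}\,\middle|\,\vect{Z}_0=\vect{e}_i\right] \;=\; e^{-\alpha},
\end{equation*}
which makes $M_k := e^{\theta' Q_k + \alpha k}$ a nonnegative supermartingale on $\{k\leq N\}$. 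Because $f_i(\theta) := \E[e^{\theta\ip{\vect{Z}_1-\vect{e}_i}{\vect{\mu}}}\mid\vect{Z}_0=\vect{e}_i]$ satisfies $f_i(0)=1$ and $f_i'(0)=(\rho-1)\mu_i<0$, the minimum is attained at some $\theta'>0$ with $e^{-\alpha}<1$, and subcriticality also guarantees $N<\infty$ almost surely.

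Applying optional stopping to $(M_{k\wedge N})$ via Fatou's lemma (permissible since $M_{k\wedge N}\geq 0$ and $N<\infty$ a.s.) gives $\E[e^{\alpha N}]\leq M_0 = e^{\theta'\ip{\vect{Z}_0}{\vect{\mu}}} = C$, and Markov's inequality then yields
\begin{equation*}
\prob{N > x}{} \;\leq\; e^{-\alpha x}\,\E\!\left[e^{\alpha N}\right] \;\leq\; C e^{-\alpha x},
\end{equation*}
which is the claim. The one nontrivial technical step is the optional-stopping argument, because $M_{k\wedge N}$ is not uniformly bounded and dominated convergence does not apply directly; the Fatou bound combined with a.s.\ extinction handles this. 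The remaining checks (that $\theta'$ lies strictly inside $(0,\theta_0]$ so the MGF hypothesis applies, and that the BFS enumeration visits each individual exactly once so $N$ genuinely equals $\sum_t\|\vect{Z}_t\|_1$) are routine.
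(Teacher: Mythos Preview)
Your proof is correct and follows essentially the same route as the paper: both run a one-at-a-time exploration of the branching tree (the paper calls it the random-walk version $\vect{S}_t$, you call it a BFS queue), build the supermartingale $e^{\theta'\ip{\vect{S}_t}{\vect{\mu}}}/\psi(\theta')^t = e^{\theta' Q_k + \alpha k}$, apply optional stopping for nonnegative supermartingales at the hitting time of $\vect{0}$, and finish with Markov's inequality. The only cosmetic differences are that the paper selects the next active individual uniformly at random rather than BFS, and invokes the optional stopping theorem for positive supermartingales directly where you spell out the Fatou argument plus almost-sure extinction.
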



\begin{proof}[Proof of Proposition \ref{propa}]  Suppose $X \geq 0$ is an arbitrary random variable such that $\E e^{\theta_1 X} < \infty$ for some $\theta_0 > 0$.  By expanding, for $\theta < \theta_0$:
\begin{equation*}
\frac{d}{d\theta} \left. \left(\E e^{\theta X} \right) \right\vert_{\theta = 0} \ = \ \frac{d}{d\theta} \left. \left(1 + \theta \ \E X + \frac{1}{2} \theta^2 \ \E X^2 + \cdots \right) \right\vert_{\theta = 0} \ = \ \E X.
\end{equation*}
Thus, for each $i = 1, \ldots, d$, 
\begin{equation}
\label{mgf-deriv}
\frac{d}{d\theta} \left. \left( \E \left[ e^{\theta \ip{\vect{Z_1}}{\vect{\mu} }} \middle | \vect{Z_0} = \vect{e_i}\right] \right) \right\vert_{\theta=0} = \E \left[ \ip{\vect{Z_1}}{\vect{\mu}} \middle | \vect{Z_0} = \vect{e_i}\right] = \rho \mu_i .
\end{equation}

\noindent  Now consider a random walk version of the multitype branching process, $(\vect{S}_t)$, constructed as follows. $\vect{S}_0 = \vect{Z_0}$ is a non-random initial vector whose $i^\text{th}$ component indicates the number of type $i$ individuals who are {\em active}.  At each step, an active individual is chosen uniformly at random, it gives birth to a random number of individuals depending on its type and according to the law for that type in the branching process (these new individuals are considered active), then it is made inactive (thus no longer included in $\vect{S}_t$).
\begin{equation}
\label{upperBoundingRW}
\vect{S}_{t+1} = \vect{S}_t + \sum_{i = 1}^d \mathbbm{1}_{\{j_{t+1} = i\}} \left( \vect{X}_{t+1}^i - \vect{e_i} \right)
\end{equation}
Where $j_t$ is the random variable that takes the value $i$ if an active individual of type $i$ is selected at time $t$, so $\prob{j_{t+1} = i \ | \ \vect{S}_t = \vect{v}}{} = v_i / \norm{\vect{v}}_1$, and for each $i= 1, \ldots, d$ the random vectors $\vect{X}_t^i$ are independent and equal in distribution to $\vect{Z}_1$ conditional on $\vect{Z}_0 = \vect{e_i}$.  Notice that $j_{t+1}$ is dependent on $\vect{S}_t$, but $\vect{X}_{t+1}^i$ is not.  The process continues until the stopping time $T := \inf \left\{ t \ \middle | \ \vect{S}_t = \vect{0} \right\}$, at which time the process dies out.  Notice that $T = \sum_{t=0}^\infty \norm{\vect{Z}_t}_1$.

Now consider an increment of this process:
\begin{align}
\nonumber \phi_{\vect{v}} (\theta) &:= \E \left[ \exp\left(\theta \ip{\sum_i \mathbbm{1}_{\{j_{t+1} = i\}} \left( \vect{X}_{t+1}^i - \vect{e_i}\right)}{\vect{\mu}} \right) \ \middle | \ \vect{S}_t = \vect{v} \right] \\
\nonumber &= \sum_i \frac{v_i}{\norm{\vect{v}}_1} \ \E e^{\theta \ip{(\vect{X}_{t+1}^i - \vect{e}_i)}{\vect{\mu}}} \\
 \label{mgf-bound} &\leq \max_i \ \E e^{\theta \ip{(\vect{X}_1^i - \vect{e}_i)}{\vect{\mu}}}\\
\nonumber &=: \psi(\theta).
\end{align}
Note that the last expression does not depend on the previous state vector, $\vect{v}$.  By equation (\ref{mgf-deriv}), for each $i = 1, \ldots, d$ we have:
\begin{equation*}
\frac{d}{d\theta} \left. \left( \E e^{\theta \ip{(\vect{X}_1^i - \vect{e}_i)}{\vect{\mu}}} \right) \right |_{\theta = 0} = \mu_i (\rho - 1).
\end{equation*}
Since $\rho < 1$ and $\mu_i > 0$ for all $i$, we have that there exist $\theta' , \alpha> 0$ such that $\psi(\theta') = e^{-\alpha} < 1$.  The following computation shows that $ e^{\theta \ip{\vect{S}_t}{\mu}} / \psi(\theta)^t$ is a supermartingale, and uses the estimate in (\ref{mgf-bound}). 
\begin{align*}
\E \left[ e^{\theta \ip{\vect{S}_{t+1}}{\vect{\mu}}} \ \middle | \ \vect{S}_t \right] &= \sum_{\vect{v} \geq \vect{0}} \mathbbm{1}_{\{\vect{S}_t = \vect{v}\} } e^{\theta \ip{\vect{v}}{\vect{\mu}}} \ \phi_{\vect{v}}(\theta) \\
&\leq \psi(\theta) \ e^{\theta \ip{\vect{S}_{t}}{\vect{\mu}}}
\end{align*}
So, by the Optional Stopping Theorem for positive supermartingales,
\begin{align*}
C := e^{\theta' \ip{\vect{S}_0}{\vect{\mu}}} \ \geq \  \E \left[ \frac{e^{\theta' \ip{\vect{S}_{T}}{\vect{\mu}}}}{\psi(\theta')^T} \right] \ = \ \E e^{\alpha T}
\end{align*}
then by Markov's inequality:
\begin{align*}
\prob{T > x}{} &= \prob{e^{\alpha T} > e^{\alpha x}}{} \\
& \leq C e^{-\alpha x}.
\qedhere
\end{align*}
\end{proof}

%
%

\begin{proof}[Proof of Theorem \ref{thma}] Consider a fixed vertex, $\vect{v}$, in the Hamming Torus, and let $C_v$ be the cluster containing that vertex ($C_{\vect{v}} = \emptyset$ if the vertex $\vect{v}$ is unoccupied).  We can reveal the vertices in $C_{\vect{v}}$ using the following cluster-discovering algorithm:
\begin{enumerate}
\item Initialize the sets $R_0 = \emptyset$, $A_0 = \{\vect{v}\}$, $U_0 = V \setminus A_0$.
\item Choose a vertex, $\vect{v}_t$, from $A_t$ in some measurable way (e.g., lexicographically or uniformly at random).
\item $A_{t+1} = A_t \setminus \{\vect{v}_t\} \cup \left\{ \vect{w} \in \mathcal{N}(\vect{v}_t) \cap U_t \ \middle | \ \vect{w} \text{ is occupied} \right\}$
\item $U_{t+1} = U_t \setminus \mathcal{N}(\vect{v}_t)$
\item $R_{t+1} = R_t \cup \{\vect{v}_t\}$
\item If $A_{t+1}$ is empty, then return $C_{\vect{v}} = R_{t+1}$, otherwise increment $t$ and go to step 2.
\end{enumerate}
In the above algorithm, $\mathcal{N}(\vect{v}) = \{ \vect{w} \in V \ | \ d(\vect{v},\vect{w}) = 1\}$ is the neighborhood of vertex $\vect{v}$.  To compare this process with a multitype branching process, we can say that at step 3, a neighbor of $\vect{v}_t$, say $\vect{w}$, is of type $i$ if $\vect{v}_t - \vect{w} = m \vect{e}_i$ for some integer $m$.  In future iterations of the algorithm, a type $i$ individual can only yield offspring of types $j \in \{1, \ldots, d \} \setminus \{ i\}$, since all of its neighbors in the $\vect{e_i}$ direction will have been removed from $U_{t+1}$.  Thus, $\abs{C_{\vect{v}}}$ is bounded above (in distribution) by the size of a multitype branching process with $Z_1^j$ distributed as a $\text{Binomial}(a_j n, \frac{\lambda}{n})$ random variable for $j \neq i$ and $Z_1^i \equiv 0$ when $\vect{Z}_0 = \vect{e_i}$.  The expectation matrix for this branching process is $\vect{M}_{\lambda}$, and the largest eigenvalue of $\vect{M}_{\lambda}$ is less than $1$ iff $\lambda < \lambda_c$.  In this regime, we have that the moment generating functions for $(\ip{\vect{Z_1} - \vect{e_i}}{\vect{\mu}} \ | \ \vect{Z_0} = \vect{e_i} )$ are uniformly (in $n$) bounded above for $\theta \geq 0$:
\begin{align*}
\E \left[ e^{\theta \ip{\vect{Z_1} - \vect{e_i}}{\mu}} \middle |  \vect{Z_0} = \vect{e_i}  \right] &= e^{-\theta \mu_i}\ \prod_{j \neq i} \ \sum_{k_j = 0}^{a_j n} { {a_j n}\choose{k_j} } \left(\frac{\lambda}{n}\right)^{k_j} \left(1-\frac{\lambda}{n}\right)^{a_j n - k_j} e^{\theta k_j \mu_j} \\
& = e^{-\theta \mu_i}\ \prod_{j \neq i} \left[ 1 + \frac{\lambda}{n} (e^{\theta \mu_j} - 1) \right]^{a_j n} \\
& \leq e^{ - \theta \mu_i } \prod_{j \neq i} \exp \left[\lambda a_j (e^{\theta \mu_j}- 1) \right] \\
& =: \psi_i (\theta).
\end{align*}
It is easy to verify that $\psi_i'(0) = \mu_i (\rho - 1) < 0$, $\psi_i (0) = 1$, $\psi_i(\theta) \to \infty$ as $\theta \to \infty$, and $\psi_i''(\theta) > 0$ for all $\theta >0$ which imply that we can choose $\alpha, C >0$ such that 
\begin{align*}
e^{-\alpha} := \min_{\theta \geq 0} \max_{i} \ \psi_i (\theta) &=: \max_{i} \ \psi_i (\theta'), \\
C :=& \exp \left[ \theta' \norm{ \vect{\mu}}_1 \right].
\end{align*}
Now applying Proposition \ref{propa} to this branching process yields the following inequality:
\begin{align*}
\prob{\abs{C_{\vect{v}}} > \frac{d+1}{\alpha} \log n}{} & \leq \prob{\sum_{t = 0}^{\infty} \norm{\vect{Z}_t}_1 > \frac{d+1}{\alpha} \log n}{}  \\
& \leq C n^{-d-1}.
\end{align*}
This implies the desired result:
\begin{equation*}
\prob{\max_{\vect{v} \in V} \abs{C_{\vect{v}}} > \frac{d+1}{\alpha} \log n}{}  \leq C(a_1 \cdots a_d) n^{-1}.
\qedhere
\end{equation*}
\end{proof}

%
%

\section{Supercritical behavior}
\label{section_supercritical}
Theorem~\ref{thmb} shows that $\lambda_c$ is the critical threshold for the emergence of a giant component.  If we let $q_i = \prob{\vect{Z}_t = \vect{0} \text{ for some t } \middle | \ \vect{Z}_0 = \vect{e_i} }{}$ be the extinction probabilities for the multitype branching process in which an individual of type $i$ gives birth to a $\text{Poisson}(\lambda a_j)$ number of type $j$ individuals for $j\neq i$ and zero individuals of type $i$, then the proportion of occupied vertices in the largest component when $\lambda > \lambda_c$ is $1 - \left( \prod_i q_i \right)^{1/(d-1)} \equiv 1-q > 0$.

\begin{theorem}[\bf\ref{thmb} restated]
Fix $d \geq 2$ and $a_1 \geq a_2 \geq \ldots \geq a_d > 0$, and let $\lambda_c$ be the unique positive solution to $\det(\vect{M}_{\lambda_c} - \vect{I}) = 0$.  If $\lambda > \lambda_c$ and $p = \frac{\lambda}{n}$ then the size of the largest connected component of the random site subgraph of the Hamming Torus is a.a.s. $(1-q) \lambda \left(\prod_i a_i \right) n^{d-1} + o (n^{d-1})$.  Furthermore, the second largest component has a.a.s. $O(\log n)$ vertices.
\end{theorem}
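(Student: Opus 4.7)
The plan follows the four-step roadmap outlined at the end of the introduction, throughout reusing the cluster-discovering algorithm from the proof of Theorem~\ref{thma} but in the regime where the Perron eigenvalue $\rho$ of $\vect{M}_\lambda$ exceeds $1$. Let $m = C \log n$ for a suitably large constant $C$, and let $\vect{Z}_t$ denote the type-vector of the active set at step $t$.

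\textbf{Steps 1 and 2 (Sections~\ref{section_supercritical2} and~\ref{section_supercritical1}).} For a fixed vertex $\vect{v}$ I run the exploration until either the active set empties or the revealed set $R_t$ first reaches size $m$. While $|R_t| \leq m$, at most $O(\log n)$ vertices have been explored, so each offspring count is $\text{Binomial}(a_j n - O(\log n), \lambda/n)$, which is $o(1)$-close in total variation to $\text{Poisson}(\lambda a_j)$. Hence the probability of early death starting from a type-$i$ seed converges to the extinction probability $q_i$ of the multitype Poisson branching process. Conditional on reaching size $m$, the exploration dominates a supercritical branching process with Perron eigenvalue $\rho' > 1$ arbitrarily close to $\rho$, a coupling valid as long as $o(n)$ rows per direction have been blocked. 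A supermartingale estimate applied to $\rho'^{-t} \ip{\vect{Z}_t}{\vect{\mu}}$, together with a second-moment bound to rule out early collapse, then shows that with probability $1-o(1)$ the exploration grows to size $n^{d-4/3}$. The exponent $d-4/3$ is tuned so the branching-process coupling still applies (since $n^{d-4/3}$ explored vertices leave $o(n)$ blocked rows per direction) while producing clusters large enough for Step~3.

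\textbf{Step 3 (Section~\ref{section_supercritical3}).} This is the main obstacle. For two disjoint sets $A, B$, each reached by distinct explorations and having size at least $n^{d-4/3}$, I must show they are joined in the full random site subgraph with probability $1 - o(n^{-2d})$, so that a union bound over pairs of starting vertices gives uniqueness of the giant. A heuristic count is encouraging: each vertex has $\sum_i(a_i n - 1) = \Theta(n)$ neighbors, so two exploration-produced sets of size $n^{d-4/3}$ should share on the order of $n^{d-5/3} \to \infty$ direct edges. To make this rigorous I would use a sprinkling decomposition $p = p_1 + p_2 - p_1 p_2$ with $p_2 = o(p)$: run Steps~1--2 under the $p_1$-occupancy to obtain seed clusters of size $\geq n^{d-4/3}$, then let an independent $p_2$-thinning of the remaining vertices create direct edges (or short paths through a common neighbor) between every pair of seeds with overwhelming probability. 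The delicate point is controlling the geometry of the exploration-generated clusters: one must verify they are spread out enough across the coordinate axes that sufficiently many candidate connecting vertices or rows are available.

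\textbf{Step 4 (Section~\ref{section_supercritical4}).} The number of occupied vertices is $\text{Binomial}(|V|, \lambda/n)$, concentrating at $\lambda(\prod_i a_i) n^{d-1}$. By Step~1, each occupied vertex lies in a component of size at most $m$ with probability $q + o(1)$, where $q := \prod_j e^{-\lambda a_j (1-q_j)}$; indeed the cluster of $\vect{v}$ is finite iff for each direction $j$ every type-$j$ child (their count is $\text{Poisson}(\lambda a_j)$) has an extinct subtree. Multiplying the fixed-point equations $q_i = \prod_{j\neq i}e^{-\lambda a_j(1-q_j)}$ over $i = 1,\ldots,d$ gives $\prod_i q_i = q^{d-1}$, recovering the identity $q = (\prod_i q_i)^{1/(d-1)}$ stated in the introduction. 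A second-moment calculation, handling the weak correlations between small-cluster indicators at distinct vertices whose explorations are almost disjoint, yields concentration in probability for the number of vertices in small components. Combined with the uniqueness from Step~3, the giant has size $(1-q)\lambda(\prod_i a_i) n^{d-1} + o(n^{d-1})$ while every other component is $O(\log n)$.
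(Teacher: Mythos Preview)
Your outline follows the paper's four-step strategy, and Step~4 (the definition of $q$, the identity $\prod_i q_i = q^{d-1}$, and the second-moment argument) matches the paper closely. However, Steps~1--3 contain a genuine gap that is the crux of the paper's proof.

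You propose to reuse the exploration of Theorem~\ref{thma} unchanged, asserting the lower-bound coupling holds ``as long as $o(n)$ rows per direction have been blocked.'' This is too coarse. What must be controlled is the number of explored vertices in every \emph{line} (equivalently, in every 2-dimensional coordinate plane containing that line), since a newly activated type-$i$ vertex loses one unobserved neighbor for each previously explored vertex sharing its line in direction $j\ne i$. After $n^{d-4/3}$ steps nothing a~priori prevents a single 2-plane from absorbing a constant fraction of the exploration; your ``$o(n)$ rows'' heuristic does not rule this out. The paper's key device is to run the exploration at a \emph{reduced} density $p_1=\lambda_1/n$ with $\lambda_c<\lambda_1<\lambda_c^{(d-1)}$, so that the process restricted to any $(d-1)$-dimensional hyperplane is \emph{subcritical}. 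This yields Lemma~\ref{planeBoundLemma}: with probability $1-O(n^{-(d+1)})$ no 2-plane contains more than $K_d\,n^{2/3}(\log n)^{d-2}$ explored vertices by time $r=n^{d-4/3}$, which is exactly what makes the lower-bound coupling valid. (The paper also modifies the exploration by deleting common neighbors of pairs of active vertices to prevent loops, which you omit.)

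The same plane-bound lemma is what resolves the geometric difficulty you flag in Step~3: it forces the explored clusters to be spread out, so that projections onto lower-dimensional tori are large. Your sprinkling parameters are also off---$p_2$ is $\Theta(1/n)$, not $o(p)$, and in $d\ge 4$ the paper further splits $p_2$ into $2d-4$ pieces and sprinkles iteratively down through dimensions. Finally, an error probability of $O(n^{-(d+1)})$ per vertex suffices (union bound over vertices, not pairs), so your target of $o(n^{-2d})$ is unnecessarily strong.
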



\noindent Before beginning the proof, it will be useful to show that the matrix equation defining $\lambda_c$ is equivalent to a polynomial equation that is easier to understand.
\begin{lemma}[\bf\ref{polynomialLemma} restated]
\begin{equation}
\label{poly}
\det(\vect{M}_{\lambda} - \vect{I}) = (-1)^d \left[ 1 - \sum_{\ell=2}^{d} (\ell-1) \lambda^{\ell} \mathop{\sum_{S \subset \{1, \ldots, d\} }}_{\abs{S} = \ell} \ \prod_{i \in S} a_{i} \right].
\end{equation}
\end{lemma}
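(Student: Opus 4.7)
The plan is to exhibit $\vect{M}_\lambda - \vect{I}$ as a rank-one perturbation of a diagonal matrix and then apply the matrix determinant lemma. First I would note that the $(i,j)$ entry of $\vect{M}_\lambda - \vect{I}$ can be written uniformly as $\lambda a_j - \delta_{ij}(1 + \lambda a_i)$, which gives
\begin{equation*}
\vect{M}_\lambda - \vect{I} \;=\; \vect{1}\,(\lambda \vect{a})^T - \vect{D},
\end{equation*}
where $\vect{a} = (a_1, \ldots, a_d)^T$, $\vect{1}$ is the all-ones column vector, and $\vect{D}$ is the diagonal matrix with $D_{ii} = 1 + \lambda a_i$.

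Next, applying the matrix determinant lemma $\det(\vect{X} + \vect{u}\vect{v}^T) = \det(\vect{X})\bigl(1 + \vect{v}^T \vect{X}^{-1} \vect{u}\bigr)$ with $\vect{X} = -\vect{D}$, $\vect{u} = \vect{1}$, and $\vect{v} = \lambda\vect{a}$ yields
\begin{equation*}
\det(\vect{M}_\lambda - \vect{I}) \;=\; (-1)^d \prod_{i=1}^d (1 + \lambda a_i) \left(1 - \sum_{i=1}^d \frac{\lambda a_i}{1 + \lambda a_i}\right) \;=\; (-1)^d \left[\prod_{i=1}^d (1 + \lambda a_i) \;-\; \sum_{i=1}^d \lambda a_i \prod_{j \neq i} (1 + \lambda a_j)\right].
\end{equation*}
The final step is to expand each product over subsets $S \subseteq \{1, \ldots, d\}$: the first product contributes $\lambda^{|S|} \prod_{i \in S} a_i$ once for each $S$, while $\lambda a_i \prod_{j \neq i}(1 + \lambda a_j)$ contributes the same monomial once for each $i \in S$. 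Summing over $i$, the net coefficient of $\lambda^{|S|} \prod_{i \in S} a_i$ is $1 - |S|$. The empty set yields the constant $1$, singletons contribute zero, and grouping the $|S| = \ell \geq 2$ terms produces the claimed identity.

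The identity is essentially mechanical once the rank-one structure is recognized, so there is no substantial obstacle; the only mildly clever move is the decomposition into $\vect{1}(\lambda\vect{a})^T - \vect{D}$. A purely determinant-theoretic alternative is to expand $\det(\vect{M}_\lambda - \vect{I})$ column by column by multilinearity after writing the $j$-th column as $\lambda a_j \vect{1} - (1 + \lambda a_j) \vect{e}_j$; any term that selects the $\vect{1}$ summand in two or more columns vanishes because of repeated columns, reducing the computation to the same subset sum.
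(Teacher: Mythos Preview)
Your proof is correct and takes a genuinely different route from the paper. The paper proceeds by induction on $d$: after substituting $\gamma = 1/\lambda$ and clearing denominators, it differentiates both sides with respect to $\gamma$, uses the product rule for determinants (differentiating one row at a time), applies the inductive hypothesis to each resulting $(d-1)\times(d-1)$ minor, and then pins down the constant of integration by evaluating at $\gamma = 0$ via an explicit triangular factorization of the all-off-diagonal matrix. Your argument instead recognizes the rank-one structure $\vect{M}_\lambda - \vect{I} = \vect{1}(\lambda\vect{a})^T - \vect{D}$ and applies the matrix determinant lemma in one stroke, followed by a subset expansion.

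What your approach buys is brevity and transparency: the combinatorics of the coefficient $1-|S|$ falls out immediately from expanding $\prod_i(1+\lambda a_i) - \sum_i \lambda a_i \prod_{j\neq i}(1+\lambda a_j)$, with no induction or auxiliary factorization needed. The paper's approach, while longer, is self-contained in that it does not invoke the matrix determinant lemma, and the derivative step has the structural feature of directly relating the $d$-dimensional determinant to the $(d-1)$-dimensional ones appearing elsewhere in the paper. One very minor technical point worth making explicit in your write-up: the matrix determinant lemma as you state it requires $-\vect{D}$ invertible, i.e.\ $\lambda \neq -1/a_i$; since both sides of \eqref{poly} are polynomials in $\lambda$, agreement away from these finitely many points extends to all $\lambda$ by continuity. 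Your multilinearity alternative sidesteps this entirely.
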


\noindent From this reformulation, it is easy to see that a positive solution to $\det(\vect{M}_{\lambda} - \vect{I}) =0$ exists and is unique, since the polynomial in the above expression is monotone increasing for odd $d$ (monotone decreasing for even $d$) and $\lambda>0$, and the value at $\lambda = 0$ is $(-1)^d$.

\begin{proof}[{Proof of Lemma \ref{polynomialLemma}}] To demonstrate the validity of equation (\ref{poly}), we will first make a change of variables by letting $\gamma = 1/\lambda$, then multiplying both sides by $\gamma^d$, so the equation we must verify is:
\begin{equation}
\label{polygam}
\left| \begin{array}{ccccc}
-\gamma & a_2 & a_3 &\cdots& a_d  \vspace{.1cm} \\
a_1 & -\gamma & a_3 &\cdots& a_d\\
a_1 & a_2 & -\gamma & & \vdots \\
\vdots & \vdots & & \ddots & a_d\\
a_1 & a_2 & \cdots & a_{d-1} & -\gamma
\end{array} \right| = (-1)^d \left[ \gamma^{d} - \sum_{\ell=2}^{d} (\ell-1) \gamma^{d-\ell} \mathop{\sum_{S \subset \{1, \ldots, d\} }}_{\abs{S} = \ell} \ \prod_{i \in S} a_{i} \right].
\end{equation}
We will proceed by induction on $d$.  For $d=1$ both sides of equation (\ref{polygam}) are equal to $-\gamma$.  For $d > 1$, we will start by showing that the derivative with respect to $\gamma$ on both sides of equation (\ref{polygam}) is the same.  Taking the derivative of the left hand side yields:

\begin{align*}
\frac{d}{d\gamma} \left| \begin{array}{ccccc}
-\gamma & a_2 & a_3 &\cdots& a_d  \vspace{.1cm} \\
a_1 & -\gamma & a_3 &\cdots& a_d\\
a_1 & a_2 & -\gamma & & \vdots \\
\vdots & \vdots & & \ddots & a_d\\
a_1 & a_2 & \cdots & a_{d-1} & -\gamma
\end{array} \right| =& \left| \begin{array}{ccccc}              
-1 & 0 & 0 &\cdots& 0  \vspace{.1cm} \\
a_1 & -\gamma & a_3 &\cdots& a_d\\
a_1 & a_2 & -\gamma & & \vdots \\
\vdots & \vdots & & \ddots & a_d\\
a_1 & a_2 & \cdots & a_{d-1} & -\gamma
\end{array} \right|   + \left| \begin{array}{ccccc}            
-\gamma & a_2 & a_3 &\cdots& a_d  \vspace{.1cm} \\
0 & -1 & 0 &\cdots& 0\\
a_1 & a_2 & -\gamma & & \vdots \\
\vdots & \vdots & & \ddots & a_d\\
a_1 & a_2 & \cdots & a_{d-1} & -\gamma
\end{array} \right|    \\                                             
& + \cdots + \left| \begin{array}{ccccc}
-\gamma & a_2 & a_3 &\cdots& a_d  \vspace{.1cm} \\
a_1 & -\gamma & a_3 &\cdots& a_d\\
\vdots & \vdots & \ddots & & \vdots \\
a_1 & a_2 & \cdots & -\gamma & a_d \\
0 & 0 & \cdots & 0 & -1
\end{array} \right|   \\                                               
= & \ -\sum_{j=1}^d (-1)^{d-1} \left[ \gamma^{d-1} - \sum_{\ell=2}^{d-1} (\ell-1) \gamma^{d-1-\ell} \mathop{\sum_{S \subset \{1, \ldots, d\} \setminus \{j\} }}_{\abs{S} = \ell} \ \prod_{i \in S} a_{i} \right] \\
= & \ (-1)^{d} \left[ d \gamma^{d-1}- \sum_{\ell=2}^{d-1} (\ell-1) \gamma^{d-1-\ell} \sum_{j=1}^d \ \mathop{\sum_{S \subset \{1, \ldots, d\} \setminus \{j\} }}_{\abs{S} = \ell} \ \prod_{i \in S} a_{i} \right]  \\
= & \ (-1)^{d} \left[ d \gamma^{d-1}- \sum_{\ell=2}^{d-1} (\ell-1) \gamma^{d-1-\ell} (d-\ell) \ \mathop{\sum_{S \subset \{1, \ldots, d\} }}_{\abs{S} = \ell} \ \prod_{i \in S} a_{i} \right]
\end{align*}
which is the derivative of the right hand side of equation (\ref{polygam}).  The second line follows from the inductive hypothesis.  The last line results from counting the number of values of $j$ such that, given a fixed set $S \subset \{1, \ldots, d\}$ of size $\ell$, $S$ is also a subset of $\{1, \ldots, d \} \setminus \{j\}$ of size $\ell$. This is the same as counting the number of values of $j$ such that $j \notin S$, which is $(d-\ell)$.  Now that we have shown the derivatives to be equivalent, to verify equation (\ref{polygam}) we only need to show that it holds for $\gamma = 0$.  To this end, we can factor the matrix on the left hand side of (\ref{polygam}) when $\gamma = 0$ as:
\begin{align*}
\begin{pmatrix}       
1 & 1 & \cdots & 1  \\
0 & 1 & \ddots& \vdots\\
\vdots &\ddots & \ddots & 1 \\
0 & \cdots  & 0 & 1\\
\end{pmatrix} \begin{pmatrix}        
1 & 0 & \cdots & 0 & 0 \\
0 & 1 & 0 & \cdots & 0\\
\vdots &\ddots & \ddots & \ddots & \vdots \\
0 & \cdots  & 0 & 1 & 0\\
-1 & -2 & \cdots & -(d-1) & 1
\end{pmatrix} & \begin{pmatrix}         
-1 & 1 & 0 & \cdots & 0  \\
0 & -1 & 1 & \ddots & \vdots\\
\vdots &\ddots & \ddots & \ddots & 0 \\
0 & \cdots  & 0 & -1 & 1\\
0 & \cdots & 0 & 0 & (d-1)
\end{pmatrix} \\
& \times \begin{pmatrix}              
a_1 & 0 & \cdots & 0 \\
0 & a_2 & \ddots &  \vdots \\
\vdots &\ddots & \ddots & 0 \\
0 & \cdots  & 0 & a_d\\
\end{pmatrix}.
\end{align*}
Since all of these matricies are triangular, it is clear that the determinant on the left hand side of (\ref{polygam}) when $\gamma = 0$ is $(-1)^{d-1} (d-1) \prod_i a_i$, which is equal to the right hand side of (\ref{polygam}) when $\gamma = 0$.  Thus, we have demonstrated equation (\ref{poly}).
\end{proof}


\begin{proof} [{Proof of Theorem \ref{thmb}}] We begin by considering $C_{\vect{v}}$, the cluster containing a fixed vertex $\vect{v}$, and reveal the vertices in $C_{\vect{v}}$ using a similar algorithm to that described in the proof of Theorem 1, but with one noteworthy modification:  We will remove some additional vertices at step $4$ before they can be observed in any subsequent iterations.  This will avoid the problem of generating closed loops, which would severely reduce the unseen neighborhoods of active vertices and make it difficult to couple the process with a lower-bounding branching process.  The cluster-discovering algorithm is then:
\begin{enumerate}
\item Initialize the sets $R_0 = \emptyset$, $A_0 = \{\vect{v}\}$, $U_0 = V \setminus A_0$.
\item Choose a vertex, $\vect{v}_t$, from $A_t$ in some measurable way (e.g., lexicographically or uniformly at random).
\item $A_{t+1} = A_t \setminus \{\vect{v}_t\} \cup \left\{ \vect{w} \in \mathcal{N}(\vect{v}_t) \cap U_t \ \middle | \ \vect{w} \text{ is occupied} \right\}$
\item $U_{t+1} = \left(U_t \setminus \mathcal{N}(\vect{v}_t) \right) \setminus \left\{ \vect{w} \in \mathcal{N}(\vect{u}) \cap \mathcal{N}(\vect{v}) \ \middle | \ \vect{u}, \vect{v} \in A_{t+1}, \vect{u} \neq \vect{v} \right\} $
\item $R_{t+1} = R_t \cup \{\vect{v}_t\}$
\item If $A_{t+1}$ is empty, then return $R_{t+1}$, otherwise increment $t$ and go to step 2.
\end{enumerate}
We will need to keep track of the number of vertices of each type, so let $\A_t^{(i)}$ denote the number of type $i$ vertices in $A_t$, let $\vect{\A}_t = \left( \A_t^{(1)}, \ldots, \A_t^{(d)} \right)$, and let $\mathcal{N}^i(\vect{v}) = \{ \vect{w} \in V \ | \ \vect{v} - \vect{w} = m \vect{e_i} \text{ for some } m\}$.  As explained in the proof of Theorem~\ref{thma}, a vertex, $\vect{w}$, discovered at time $t$ is labeled type $i$ if $\vect{w} \in \mathcal{N}^i(\vect{v}_t)$.  That is, a vertex is of type $i$ if it is discovered by searching neighboring vertices in the direction of the $i^{\text{th}}$ basis vector.

The above algorithm will give a lower bound on $C_{\vect{v}}$, but we will make use of an upper-bounding branching process, as in the proof of Theorem~\ref{thma}, to tightly control the size of $C_{\vect{v}}$.  We can couple the random walk version of the upper-bounding branching process with $\vect{\A}_t$ by using the same random variables in the construction of $\vect{S}_t$ as in the construction of $\vect{\A}_t$ whenever possible, and when independence is an issue, we just add independent copies of these random variables to $\vect{S}_t$.  More rigorously, let $\xi_{\vect{v}}$ be the random variable that takes the value $1$ if the vertex $\vect{v}$ is occupied and is $0$ otherwise. Then we can define $\vect{S}_t$ iteratively for each $i$:
\begin{equation}
\label{upperBoundCoupling}
{S}_{t+1}^{(i)} = {S}_t^{(i)} + \sum_{\vect{w} \in \mathcal{N}^i (\vect{v}_t) \cap U_t} \xi_{\vect{w}} + \sum_{k=1}^{a_i n - \abs{\mathcal{N}^i(\vect{v}_t) \cap U_t}} \eta_k^{(t,i)} - \mathbbm{1}_{\{\vect{v}_t \text{ is of type } i\}},
\end{equation}
where the $\eta_k^{(t,i)}$ are i.i.d. $\text{Bernoulli}(p)$ random variables, and are independent of the $\xi_{\vect{w}}$.  As long as $\vect{\A}_t \neq \vect{0}$, then we can choose the same focal vertex $\vect{v}_t$ for both processes.  But, if $\vect{\A}_t = \vect{0}$ and $\vect{S}_t \neq \vect{0}$ then we can just choose a vertex from the process $\vect{S}_t$ in some measurable way.

\subsection{Existence and Survival of a Lower-bounding Random Walk} \label{section_supercritical1}

We will make use of this upper bounding random walk in a moment, but first we also need to introduce a lower bounding random walk - also based on a branching process.  We will use the upper bounding random walk to grow a sufficiently large cluster for the lower bounding walk to take over, but first we will need to know what ``sufficiently large'' means.  Also, we need to know that such a lower bound exists, so we begin there.  Ideally, we would like to have a lower bound on $\abs{U_r \cap \mathcal{N}^i (\vect{v})}$ for some large enough $r$.  Using the Erd\"os-R\'enyi model as guidance, we need $n^{(d-1)/2}\ll r \ll n^{d-1}$, so we choose $r = n^{d - \frac{4}{3}}$.

To generate the lower bounding random walk, we will begin by letting $p = p_1 + p_2 - p_1 p_2$, where $p_1 = \frac{\lambda_1}{n}$ and $p = \frac{\lambda}{n}$.  We want to choose $\lambda_1 < \lambda$ so that $\lambda_1 > \lambda_c$, but $\lambda_1$ is smaller than the positive solution to $\det(\vect{M}^{(d-1)}_\lambda - \vect{I}^{(d-1)}) = 0$, where $\vect{M}^{(d-1)}_\lambda$ denotes the submatrix of $\vect{M}_\lambda$ consisting of the first $(d-1)$ rows and columns.  Call this solution $\lambda_c^{(d-1)}$.  In other words, we want $\lambda_1$ to be supercritical for the $d$-dimensional process, but subcritical for the process restricted to any $(d-1)$-dimensional plane.  By the monotonicity of the process of growing a cluster, the critical value for the $(d-1)$-dimensional process obtained by considering the first $(d-1)$ dimensions is smallest, since we assumed that $a_1 \geq a_2 \geq \cdots \geq a_d > 0$.  The critical value for this process is $\lambda_c^{(d-1)}$, so to guarantee the existence of such a $\lambda_1$, we just need to verify that $\lambda_c < \lambda_c^{(d-1)}$.  This is easiest to see by using equation (\ref{poly}).  The values of $\det(\vect{M}_{\lambda_c^{(d-1)}} - \vect{I})$ and $\det(\vect{M}_{0} - \vect{I})$ have opposite signs, so by the Intermediate Value Theorem and the uniqueness of $\lambda_c$, we have that $0 < \lambda_c < \lambda_c^{(d-1)}$.


Consider the process of discovering a cluster starting with $m  = O(\log n)$ vertices (we will decide what $m$ is exactly just before equation (\ref{survival-r})) of arbitrary types in which each unseen vertex has probability $p_1$ of being occupied; call this process $\vect{\A}_t'$. The corresponding active set of vertices will be referred to as $A'_t$, and we have assumed that $\abs{A'_0} = m$.  These $m$ vertices will ultimately come from running the cluster-discovering process up to some time $s = \beta \log n$ starting from a single vertex and using parameter $p$ in Section~\ref{section_supercritical2}.  Conditioned on the survival of this process, we will have that $\abs{R'_0} = \abs{R_s} = s$, where $R'_t$ is the set of occupied vertices which have been removed from the active set up to time $t$ when we start with $m$ active vertices and use parameter $p_1$.  Also, let $U'_t$ be the set of unobserved vertices up to time $t$, and $U'_0 = U_s$.  Denote by 
\begin{equation}
\mathcal{P}_{(i_1, \ldots, i_{k})}^{(\ell_1, \ldots, \ell_{k}) } := \{ \vect{v} \in V \ | \ v_{i_1} = \ell_1, \ldots , v_{i_k} = \ell_k \}
\end{equation}
 the intersection of the vertex set, $V$, with the planes $v_{i_1} = \ell_1, \ldots, v_{i_k} = \ell_{k}$.  Throughout this argument we will assume that $1 \leq i_1, \ldots , i_k \leq d$, $i_{j_1} \neq i_{j_2}$ for $j_1 \neq j_2$, and $\ell_j \in \{ 1, \ldots, a_{i_j} n \}$, so we have no trivial constraints.  We want to bound the number of vertices that become occupied up to time $r$ in any $2$-dimensional plane, $\mathcal{P}_{(i_1, \ldots, i_{d-2})}^{(\ell_1, \ldots, \ell_{d-2}) }$.  This will give us the desired lower bound on  $\abs{U'_r \cap \mathcal{N}^i (\vect{v}')}$ for each $i$ and $\vect{v'}$ that appear in the cluster discovering algorithm.  To get a bound on the number of occupied vertices in these $2$-dimensional planes, we will work backwards by first getting a bound on the number of vertices that become occupied up to time $r$ in $(d-1)$-dimensional planes of the form $\mathcal{P}_{(i_1)}^{(\ell_1)}$.  In the statement of Lemma~\ref{planeBoundLemma}, we allow $m$ and $s$ to be larger than we will need for the proof of Theorem~\ref{thmb}.  This is because we will also use Lemma~\ref{planeBoundLemma} in the proofs of Theorems~\ref{thm-con} and~\ref{thm-isocon}, where $m$ and $s$ will be slightly larger.
 
\begin{lem}
\label{planeBoundLemma}
If $\abs{A'_0} = m = o(n^{2/3})$, $\abs{R'_0} = s = o(n^{2/3})$ and $r = n^{d-4/3}$, then there is a constant $K_d$ such that:
\begin{align}
\nonumber  \prob{\abs{\mathcal{P}_{(i_1,\ldots, i_{d-2})}^{(\ell_1, \ldots, \ell_{d-2})} \cap (A'_r \cup R'_r)} \geq K_d n^{2/3} (\log n)^{d-2} \text{ for some } (i_1, \ldots, i_{d-2})  \text{ and } (\ell_1, \ldots, \ell_{d-2})}{p_1} \\
 \label{pBd}
 =  O(n^{-(d+1)}).
\end{align}
\end{lem}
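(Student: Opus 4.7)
The plan is to prove the bound by induction on the codimension $k$ of the plane, establishing that for each $k = 0, 1, \ldots, d-2$, with probability at least $1 - O(n^{-(d+1)})$ every codim-$k$ plane $P$ satisfies $\abs{P \cap (A'_r \cup R'_r)} \leq C_k n^{d-4/3-k}(\log n)^k$; the lemma is the endpoint $k = d-2$. The induction rests on the choice of $\lambda_1$: by the polynomial form in Lemma~\ref{polynomialLemma}, the critical value $\lambda_c^{(j)}$ for the $j$-dimensional Hamming subtorus is monotonically decreasing in $j$ (adding a dimension only enlarges the relevant sum), so $\lambda_1 < \lambda_c^{(d-1)} \leq \lambda_c^{(d-k)}$ for all $k \geq 1$, and the revealing process restricted to any codim-$\geq 1$ plane is subcritical, allowing Proposition~\ref{propa} to bound cluster sizes there by $L\log n$ with probability $1 - O(n^{-\alpha L})$ for arbitrary $L$.

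The base case $k = 0$ is a Chernoff bound: after $r = n^{d-4/3}$ steps, $\abs{A'_r \cup R'_r} \leq m + s + r + \sum_{t=1}^{r} X_t$ with each $X_t$ stochastically dominated by $\text{Bin}(n\sum_i a_i, p_1)$ (mean $O(1)$), yielding $\abs{A'_r \cup R'_r} = O(n^{d-4/3})$ with stretched-exponential failure probability. For the induction step, fix a codim-$k$ plane $P$; it is contained in exactly $k$ codim-$(k-1)$ planes $P'_1, \ldots, P'_k$, one per dropped coordinate $i_j$. Every vertex of $P \cap R'_r$ is either an initial vertex (at most $m + s$ of these) or the descendant of a vertex that first entered $P$ from some $P'_j \setminus P$ along a direction-$i_j$ edge. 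Each focal vertex in $P'_j \setminus P$ has a unique direction-$i_j$ neighbor in $P$, occupied independently with probability $p_1$, so the number of entries to $P$ is stochastically dominated by $\sum_{j=1}^{k} \text{Bin}(\abs{P'_j \cap R'_r}, p_1)$, which under the inductive hypothesis has mean $O(n^{d-4/3-k}(\log n)^{k-1})$ and concentrates by Chernoff.

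Once a seed lands in $P$, its descendants inside $P$ are contained in its connected component in the random site subgraph of $P$ alone, viewed as a $(d-k)$-dimensional Hamming subtorus at parameter $p_1$. Since $\lambda_1 < \lambda_c^{(d-k)}$ this is subcritical, so the proof of Theorem~\ref{thma} via Proposition~\ref{propa} yields that each such component exceeds $L\log n$ with probability $O(n^{-\alpha L})$. Choosing $L$ so that $\alpha L \geq 2d + 1$, and union-bounding over the at most $O(n^k)$ codim-$k$ planes together with the at most $O(n^{d-4/3})$ total seeds from the base case, produces failure probability $O(n^{-(d+1)})$ and the advertised bound $C_k n^{d-4/3-k}(\log n)^k$, completing the induction step.

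The main obstacle is making the two stochastic dominations (entries by Binomials, $P$-subclusters by subcritical site components) play together cleanly, since they reference the same pool of occupation bits in $P$. The cleanest remedy is to sample all $\{\xi_{\vect{v}}\}_{\vect{v} \in V}$ in advance and observe that, conditional on the algorithm's history up to any step, every previously unseen vertex is still $\text{Bernoulli}(p_1)$; then the entry count is a genuine Binomial, and each seed's $P$-descendants form a subset of its $p_1$-site component in $P$ to which Proposition~\ref{propa} applies verbatim. The resulting recursion $C_k = O(k \lambda_1 L)\, C_{k-1}$ closes, yielding a final constant $K_d = C_{d-2}$ depending only on $d$, the $a_i$'s, and $\lambda_1$.
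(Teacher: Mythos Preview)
Your proposal is correct and follows essentially the same approach as the paper: induction on the codimension $k$ of the plane, where seeds enter a codim-$k$ plane from the $k$ parent codim-$(k-1)$ planes (counted via a Binomial with parameter $p_1$ and the inductive size bound), and each seed's in-plane progeny is bounded by $O(\log n)$ via the subcriticality of $\lambda_1$ in every $(d-k)$-dimensional subtorus together with Proposition~\ref{propa}. The paper starts the induction at $k=1$ using the trivial bound of $r$ potential seeds rather than your explicit $k=0$ Chernoff base case, and it treats $d=2$ separately, but the iterative bound $O(n^{d-4/3-k}(\log n)^k)$ and the final constant $K_d$ are obtained in the same way.
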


\begin{proof}[Proof of Lemma \ref{planeBoundLemma}]
We begin with $d=2$.  In this case, equation (\ref{pBd}) is simply the statement that at most $K_2 n^{2/3}$ occupied vertices have been discovered by time $r = n^{2/3}$, so we merely need a large deviation bound.  At each step, we can discover at most a Binomial$(a_1 n, \frac{\lambda_1}{n})$ number of occupied vertices.  Thus,
\begin{align*}
\prob{\abs{A'_r \cup R'_r} \geq 2 a_1 \lambda_1 n^{2/3}}{} &\leq e^{-2 a_1 \lambda_1 r} \E e^{\abs{A'_r \cup R'_r}} \\
& \leq  e^{-2 a_1 \lambda_1 r +m +s} \left[ 1 + \frac{\lambda_1}{n} ( e - 1) \right]^{a_1 n r} \\
& \leq \exp \left[ -2 a_1 \lambda_1 r + a_1 \lambda_1 (e-1) r  +m +s\right] \\
& = O(n^{-3}).
\end{align*}
In the first line above we exponentiated and applied Markov's inequality, in the second line we used that the total number of births up to time $r$ is stochastically bounded by a Binomial$(a_1 n r, \frac{\lambda_1}{n})$ random variable, in the fourth line we used the bound $1+x \leq e^x$, and in the last line we used that $m+s =o(n^{2/3})$ and $e-1 < 2$.  Thus we have (\ref{pBd}) for $d=2$ with $K_2 = 2 a_1 \lambda_1$.

Now, for $d \geq 3$, first consider a fixed plane $\mathcal{P}_{(i_1)}^{(\ell_1)}$.  At each step in the cluster discovering algorithm, either a vertex in this plane is chosen or a vertex in some other parallel plane is chosen.  If a vertex in this plane is chosen, then if we restrict our attention to just this plane, that is we remove $\mathcal{N}^{i_1}(\vect{v}_t')$ from $\mathcal{N}(\vect{v}_t')$ at step 3 of the algorithm, the resulting process will look like a subcritical branching process in $(d-1)$ dimensions, by the assumptions we made on $\lambda_1$.  Thus, restricted only to this plane, no vertex will give rise to a cluster larger than $\beta_1 \log n$ vertices with at least probability $(1 - n^{-d-2})$ (take $\beta_1 = \frac{2d+1}{\alpha_{(d-1)}}$ in the proof of Theorem 1).  If, instead, a vertex in another plane is chosen, then this vertex may have a single opportunity to give rise to an occupied ``seed'' vertex in $\mathcal{P}_{(i_1)}^{(\ell_1)}$.  If the potential seed vertex has already been examined during a previous iteration of the algorithm, then it will not increase the number of seeds in $\mathcal{P}_{(i_1)}^{(\ell_1)}$ whether it is occupied or not.  If the potential seed vertex has not been examined (it is in $U_t$), then it will be occupied with probability $\frac{\lambda_1}{n}$.  So by time $r$, the expected number of seeds is $\leq \frac{\lambda_1 r}{n}$, and each seed is associated with a cluster restricted to $\mathcal{P}_{(i_1)}^{(\ell_1)}$ of size at most $\beta_1 \log n$.  If we let $X_r$ be the number of seed vertices generated in $\mathcal{P}_{(i_1)}^{(\ell_1)}$ by time $r$, then $X_r$ is a sum of a random number of Bernoulli($\frac{\lambda_1}{n}$) random variables.  But $X_r$ is stochastically bounded above by $X'_r \sim$ Binomial$(r, \frac{\lambda_1}{n})$, since the number of potential seed vertices cannot exceed $r$.  Using this fact, and applying a standard generating function argument shows that for $\gamma_1 = 2\log2 - 1 > 0$,
\begin{equation*}
\prob{X_r \geq 2 \frac{\lambda_1 r}{n}}{} \leq e^{-\gamma_1 \frac{\lambda_1 r}{n}}.
\end{equation*}
Recalling that we have chosen $r = n^{d - \frac{4}{3}}$, this implies that
\begin{align*}
\prob{\abs{\mathcal{P}_{(i_1)}^{(\ell_1)} \cap (A'_r \cup R'_r) } \geq 2 \lambda_1 \beta_1 n^{d - 7/3} \log n }{} \leq  e^{-\gamma_1 \lambda_1 n^{d- (7/3)}} + n^{-d-2}
\end{align*}
and since the number of planes, $\mathcal{P}_{(i_1)}^{(\ell_1)}$, is $\leq d a_1 n$, that
\begin{align}
\label{planeBound1}
\prob{\abs{ \mathcal{P}_{(i_1)}^{(\ell_1)} \cap (A'_r \cup R'_r) }  \geq 2 \lambda_1 \beta_1 n^{d - 7/3} \log n \text{ for some } (i_1) \text{ or } (\ell_1) }{} =  O(n^{-(d+1)}).
\end{align}


Now we wish to show that if every plane of the form $\mathcal{P}_{(i_1,\ldots, i_{k-1})}^{(\ell_1, \ldots, \ell_{k-1})}$ has at most $N$ occupied vertices by time $r$, then with high probability, every plane of the form $\mathcal{P}_{(i_1,\ldots, i_{k})}^{(\ell_1, \ldots, \ell_{k})}$ has at most $2 k \lambda_1 \frac{N}{n} \beta_k \log n$ occupied vertices by time $r$.  The reasoning is similar to the argument above for $\mathcal{P}_{(i_1)}^{(\ell_1)}$.  Since no plane of the form $\mathcal{P}_{(i_1,\ldots, i_{k-1})}^{(\ell_1, \ldots, \ell_{k-1})}$ has more than $N$ occupied vertices, then certainly no set of the form $\mathcal{P}_{(i_1,\ldots, i_{k-1})}^{(\ell_1, \ldots, \ell_{k-1})} \setminus \mathcal{P}_{(i_1,\ldots, i_{k})}^{(\ell_1, \ldots, \ell_{k})}$ has more than $N$ occupied vertices.  Let us fix $\mathcal{P}_{(i_1,\ldots, i_{k})}^{(\ell_1, \ldots, \ell_{k})}$, and denote by
$$
\mathcal{P}_{(i_1,\ldots, i_{k}) \setminus (i_j)}^{(\ell_1, \ldots, \ell_{k})\setminus (\ell_j)} := \mathcal{P}_{(i_1,\ldots, i_{j-1}, i_{j+1},\ldots, i_{k})}^{(\ell_1, \ldots,\ell_{j-1}, \ell_{j+1},\ldots,  \ell_{k})} \setminus \mathcal{P}_{(i_1,\ldots, i_{k})}^{(\ell_1, \ldots, \ell_{k})}
$$
In one iteration of the cluster discovering algorithm, the focal vertex, $\vect{v}_t'$, is either in $\mathcal{P}_{(i_1,\ldots, i_{k})}^{(\ell_1, \ldots, \ell_{k})}$, in $\mathcal{P}_{(i_1,\ldots, i_{k}) \setminus (i_j)}^{(\ell_1, \ldots, \ell_{k})\setminus (\ell_j)}$ for some $j = 1, \ldots, k$, or neither.  If it is in neither of these sets, then $\mathcal{N}(\vect{v}_t) \cap \mathcal{P}_{(i_1,\ldots, i_{k})}^{(\ell_1, \ldots, \ell_{k})} = \emptyset$, so this case does not contribute to the number of occupied vertices in this plane.  If the focal vertex is in $\mathcal{P}_{(i_1,\ldots, i_{k})}^{(\ell_1, \ldots, \ell_{k})}$, then we can restrict our attention to just this plane, and the process looks like a subcritical process in $d-k$ dimensions (by our assumptions on $\lambda_1$).  That is, we remove $\cup_{j=1}^k \mathcal{N}^{i_j}(\vect{v}_t')$ from $\mathcal{N}(\vect{v}_t')$ at Step 3 of the cluster discovering algorithm.  Therefore, all vertices in $\mathcal{P}_{(i_1,\ldots, i_{k})}^{(\ell_1, \ldots, \ell_{k})}$ belong to clusters restricted to $\mathcal{P}_{(i_1,\ldots, i_{k})}^{(\ell_1, \ldots, \ell_{k})}$ of size at most $\beta_k \log n$ with probability at least $(1- n^{-(d+k+1)})$ (take $\beta_k = \frac{2d+1 }{\alpha_{(d-k)}}$).  Finally, if the focal vertex is in $\mathcal{P}_{(i_1,\ldots, i_{k}) \setminus (i_j)}^{(\ell_1, \ldots, \ell_{k})\setminus (\ell_j)}$ for some $j = 1, \ldots, k$, then it has precisely one neighbor in $\mathcal{P}_{(i_1,\ldots, i_{k})}^{(\ell_1, \ldots, \ell_{k})}$.  If this neighbor is unseen (is in $U_t$), then it has probability $\frac{\lambda_1}{n}$ of being an occupied seed vertex.  There are at most $kN$ occupied vertices in $\mathcal{P}_{(i_1,\ldots, i_{k}) \setminus (i_j)}^{(\ell_1, \ldots, \ell_{k})\setminus (\ell_j)}$ for $j = 1, \ldots, k$, so the number of seed vertices, $Y_N^k$, is a sum of at most $kN$ Bernoulli$(\frac{\lambda_1}{n})$ random variables, and is thus stochastically bounded above by a Binomial$(kN, \frac{\lambda_1}{n})$ random variable.  Thus, we find that
\begin{equation*}
\prob{Y_N^k \geq 2 \frac{\lambda_1 k N}{n}}{} \leq e^{-\gamma_1 \frac{\lambda_1 k N}{n}},
\end{equation*}
where, again, $\gamma_1 = 2\log2 - 1$.  So, conditional on the event that no plane of the form $\mathcal{P}_{(i_1,\ldots, i_{k-1})}^{(\ell_1, \ldots, \ell_{k-1})}$ has more than $N$ occupied vertices by time $r$, we have that:
\begin{align*}
\prob{\mathcal{P}_{(i_1,\ldots, i_{k})}^{(\ell_1, \ldots, \ell_{k})} \text{ has } \geq 2 k \lambda_1 \beta_k \frac{N}{n} \log n \text{ occupied vertices by time } r}{} \leq  e^{-\gamma_1 \lambda_1 k \frac{N}{n}} + n^{-d-k-1}.
\end{align*}
Since the number of planes, $\mathcal{P}_{(i_1,\ldots, i_{k})}^{(\ell_1, \ldots, \ell_{k})}$, is at most ${d \choose k} a_1^k n^k$, and provided that $N = \omega(n^{1+\epsilon})$ for some $\epsilon >0$, we have:
\begin{align}
\label{planeBound2}
\prob{\text{Any } \mathcal{P}_{(i_1,\ldots, i_{k})}^{(\ell_1, \ldots, \ell_{k})} \text{ has } \geq 2 k \lambda_1 \beta_k \frac{N}{n} \log n \text{ occupied vertices by time } r}{} =  O(n^{-(d+1)})
\end{align}
conditional on the event that no plane of the form $\mathcal{P}_{(i_1,\ldots, i_{k-1})}^{(\ell_1, \ldots, \ell_{k-1})}$ has more than $N$ occupied vertices by time~$r$.

Finally, we can put together equations (\ref{planeBound1}) and (\ref{planeBound2}) to get:
\begin{equation*}
\prob{\text{Any } \mathcal{P}_{(i_1,\ldots, i_{d-2})}^{(\ell_1, \ldots, \ell_{d-2})} \text{ has } \geq K_d n^{2/3} (\log n)^{d-2} \text{ occupied vertices by time } r}{} =  O(n^{-(d+1)})
\end{equation*}
where $K_d = 2^{d-2}\cdot (d-2)! \cdot \lambda_1^{d-2} \cdot \left(\prod_{k=1}^{d-2} \beta_k\right)$
\end{proof}

So with probability at least $1 - O(n^{-(d+1)})$, for $t \leq r$ and $\vect{v}_t$ of type $j \neq i$,
\begin{align*}
\abs{U'_t \cap \mathcal{N}^i (\vect{v}_t')} &\geq a_i n - K_d n^{2/3} (\log n)^{d-2} \\
& \geq a_i (1-\delta) n
\end{align*}
for sufficiently large $n$, where $\delta > 0$ is such that $(1-\delta) \lambda_1 > \lambda_c$.  This implies that, up to time $r$, our process of discovering occupied vertices is bounded below by the random walk version of a multitype branching process with expectation matrix $\vect{M}_{(1-\delta) \lambda_1}$.  Let us call the random walk version of this multitype branching process $\vect{W}_t = (W_t^{(1)}, \ldots, W_t^{(d)})$, and assume that $\vect{W}_0 = \vect{\A}_0'$.  We can couple the processes $\vect{W}_t$ and $\vect{\A}_t'$ until time $T_0 \wedge T_{\delta} \wedge r$, where 
\begin{align*}
T_0 :=& \inf \left\{ t \ \middle | \ \vect{W}_t = \vect{0} \right\}, \\
T_{\delta} :=& \inf \left\{ t \ \middle | \ \abs{U_t \cap \mathcal{N}^i (\vect{v}_t')} < a_i(1-\delta)n \text{ for } \vect{v}_t' \text{ of type } j \text{ and some } i \neq j \right\},
\end{align*}
by first choosing $\vect{v}_t'$ as follows: first decide that $\vect{v}_t'$ will be of type $i$ with probability $\frac{W_{t}^{(i)}}{\norm{\vect{W_{t}}}_1}$, then choose randomly a vertex from $A'_{t}$ that is of this type.  Let $\mathcal{R}_{\delta}^{(i)}(\vect{v}_t')$ be the unseen neighborhood of $\vect{v}_t'$ restricted to the first (lexicographically) $a_i (1-\delta) n$ elements of  $U_t \cap \mathcal{N}^i (\vect{v}_t')$.  Then we can complete the coupling of $\vect{W}_t$ with $\vect{\A}_t'$ as
\begin{align}
\A_{t+1}'^{(i)} &= \A_{t}'^{(i)}  + \sum_{\vect{w} \in U_t \cap \mathcal{N}^i (\vect{v}_t')} \xi_{\vect{w}}' - \mathbbm{1}_{\{\vect{v}_t' \text{ is of type } i \}} \\
W_{t+1}^{(i)} &= W_t^{(i)} + \sum_{\vect{w} \in \mathcal{R}_{\delta}^{(i)} (\vect{v}_t') } \xi_{\vect{w}}' - \mathbbm{1}_{\{\vect{v}_t' \text{ is of type } i \}}
\end{align}
where $\xi_{\vect{w}}'$ is the random variable that takes the value $1$ if vertex $\vect{w}$ is occupied in the random site subgraph obtained using the probability parameter $p_1$, and is $0$ otherwise.


We wish to show now that this coupling will last until time $r$ if $m$ is large enough.  This will occur whenever the branching process corresponding to $\vect{W}_t$ survives.  Let 
$$
q'_{i} = \prob{\vect{W}_t = \vect{0} \text{ for some t } \middle | \ \vect{W}_0 = \vect{e_i} }{}
$$
for $i = 1, \ldots, d$.  Since this branching process is supercritical, $q'_i < 1$ for each $i$.  If we let $e^{-\gamma_2} = \max_i q'_i$ for $\gamma_2 >0$, then
\begin{align*}
 \prob{\vect{W}_t = \vect{0} \text{ for some t } \middle | \ \norm{\vect{W}_0}_1 = m }{} &\leq \left( \max_i q'_i \right)^m \\
 &= e^{-\gamma_2 m}.
\end{align*}
So if $m= \frac{d+1}{\gamma_2} \log n$, then the coupling between $\vect{W}_t$ and $\vect{\mathcal{A}}'_t$ will last until time $r$ with at least probability $1 - n^{-(d+1)}$.  Thus, conditional on $\abs{A'_0} \geq m$, $\abs{A'_0} = o(n^{2/3})$ and $\abs{R'_0} = o( n^{2/3})$ we have
\begin{equation}
\label{survival-r}
\prob{\abs{A'_r} = 0}{p_1} = O\left(n^{-(d+1)}\right).
\end{equation}


\subsection{Establishing a Set of $m$ Active Vertices}
\label{section_supercritical2}
As in the proof of Theorem~\ref{thma}, we make use of an upper-bounding branching process, but this time in a more intimate fashion.  We couple the random walk version of the upper-bounding branching process, $\vect{S}_t$, with $\vect{\mathcal{A}}_t$ as in equation~(\ref{upperBoundCoupling}). Recall from equation (\ref{upperBoundingRW}) that we can write
\begin{equation*}
\vect{S}_{t+1} \deq \vect{S}_t + \sum_{i = 1}^d \mathbbm{1}_{\{j_{t+1} = i\}} \left( \vect{X}_{t+1}^i - \vect{e_i} \right),
\end{equation*}
where $j_{t+1}$ is the random variable that takes the value $i$ if $\vect{v}_t$ is of type $i$ (or, when $\vect{\A}_t = \vect{0}$, if a type $i$ individual is chosen at time $t+1$ in the random walk process), and where the $\vect{X}_{t}^i$ are independent and distributed as $\vect{Z}_1$ conditional on $\vect{Z}_0 = \vect{e}_i$.  For this process (and later for the lower bounding process, $\vect{W}_t$) we will need the following large deviations bounds.


\begin{lem}
\label{largeDeviationsLemma}
Let $\vect{S}_t$ be the random walk version of a branching process with $d$ types in which an individual of type $i$ has a $\text{Binomial}(a_j n, \frac{\lambda}{n})$ number of offspring of type $j \neq i$ and zero offspring of type $i$.  If $\vect{M}_{\lambda} \vect{\mu} = \rho \vect{\mu}$, where $\rho >1$ and $\vect{\mu}$ is the corresponding positive eigenvector normalized so that $\norm{\vect{\mu}}_1 = 1$, then for $y < 1 < x$
\begin{align}
\label{largeDev1}
\prob{\ip{\vect{S}_t - \vect{S}_0}{\vect{\mu}} \geq x (\rho-1) \mu_{\text{max}} \ t }{} \ &\leq e^{-\eta_1 t} \\
\label{largeDev2}
\prob{\ip{\vect{S}_t - \vect{S}_0}{\vect{\mu}} \leq y (\rho-1) \mu_{\text{min}} \ t }{} \ &\leq e^{-\eta_2 t} 
\end{align}
where $\mu_{\text{max}} := \max_i (\mu_i)$, $\mu_{\text{min}} := \min_i (\mu_i)$, and $\eta_1, \eta_2 > 0$ depend on $x$ and $y$, respectively.
\end{lem}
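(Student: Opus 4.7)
The plan is a Chernoff (exponential--martingale) argument on the weighted walk $Y_t := \ip{\vect{S}_t - \vect{S}_0}{\vect{\mu}}$. Let
\[
\phi_i(\theta) := \E \left[ e^{\theta \ip{\vect{X}_1^i - \vect{e}_i}{\vect{\mu}}} \right] = e^{-\theta \mu_i} \prod_{j \neq i} \left[ 1 + \tfrac{\lambda}{n}\bigl(e^{\theta \mu_j} - 1\bigr) \right]^{a_j n},
\]
the per-type centered moment generating function already computed in the proof of Theorem~\ref{thma}. It is finite for every $\theta \in \mathbb{R}$, satisfies $\phi_i(0) = 1$, and has derivative $\phi_i'(0) = (\vect{M}_\lambda \vect{\mu})_i - \mu_i = (\rho-1)\mu_i$.

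Using (\ref{upperBoundingRW}) and conditioning on $\mathcal{F}_t := \sigma(\vect{S}_0, \ldots, \vect{S}_t)$, the same convex-combination estimate as in (\ref{mgf-bound}) gives
\[
\E \left[ e^{\theta \ip{\vect{S}_{t+1} - \vect{S}_t}{\vect{\mu}}} \ \middle|\ \mathcal{F}_t \right] \leq \max_i \phi_i(\theta) =: \psi(\theta),
\]
and iterating yields $\E e^{\theta Y_t} \leq \psi(\theta)^t$. Markov's inequality then gives, for $\theta > 0$,
\[
\prob{Y_t \geq x(\rho-1)\mu_{\max} t}{} \leq \exp\bigl( t \bigl[ \log \psi(\theta) - \theta x (\rho-1)\mu_{\max} \bigr] \bigr),
\]
with the analogous expression for the lower tail obtained by taking $\theta = -\theta'$, $\theta' > 0$.

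The key observation is that $\log \psi$ is continuous with $\log \psi(0) = 0$, and its one-sided derivatives at $0$ are $(\rho-1)\mu_{\max}$ from the right (the max of $\phi_i'(0)$ is attained at the largest $\mu_i$ when $\theta > 0$) and $(\rho-1)\mu_{\min}$ from the left (for $\theta < 0$ the envelope flips to the smallest $\mu_i$). Hence the Chernoff exponent for the upper tail expands as $-\theta(x-1)(\rho-1)\mu_{\max} + O(\theta^2)$, strictly negative for small $\theta > 0$ because $x > 1$; the lower-tail exponent expands as $-\theta'(1-y)(\rho-1)\mu_{\min} + O(\theta'^2) < 0$ for small $\theta' > 0$ because $y < 1$. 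Taking $\eta_1, \eta_2$ to be minus the optimized exponents proves (\ref{largeDev1}) and (\ref{largeDev2}).

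I do not expect any real obstacle: this is Cram\'er's theorem for a walk whose conditional increment MGF is uniformly dominated by $\psi$. The only mild points worth care are (i) at the absorbing state $\vect{S}_t = \vect{0}$ the conditional increment MGF equals $1$, which is $\leq \psi(\theta)$ since $\psi(\theta) \geq 1$ for $\theta \geq 0$ by Jensen (and the lower tail is only applied while the walk is alive, where the same bound propagates), and (ii) the one-sided asymmetry of $\psi'(0)$ noted above, which is precisely what forces the appearance of $\mu_{\max}$ in (\ref{largeDev1}) and $\mu_{\min}$ in (\ref{largeDev2}).
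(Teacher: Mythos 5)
Your proof is correct and follows essentially the same route as the paper: bound the conditional increment moment generating function by $\psi(\theta) = \max_i \phi_i(\theta)$, iterate to get $\E e^{\theta Y_t} \leq \psi(\theta)^t$, and then apply a Chernoff/Markov bound with the optimal $\theta$ chosen near $0$. The paper phrases the key observation by comparing $\phi_i'(0) = (\rho-1)\mu_i$ against $x(\rho-1)\mu_{\max}$ (respectively $y(\rho-1)\mu_{\min}$) for each $i$ and invoking $\psi(0)=1$, which is the same fact you express as the one-sided derivatives of $\log\psi$ at $0$ being $(\rho-1)\mu_{\max}$ from the right and $(\rho-1)\mu_{\min}$ from the left.
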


\begin{proof}[Proof of Lemma \ref{largeDeviationsLemma}]
We will first prove inequality (\ref{largeDev1}).  First, as in the proof of Proposition (\ref{propa}), we consider an increment of the process $\ip{\vect{S}_t - \vect{S}_0}{\vect{\mu}}$:
\begin{align*}
\phi_{\vect{v}} (\theta) &:= \E \left[ \exp\left(\theta \ip{\sum_i \mathbbm{1}_{\{j_{t+1} = i\}} \left( \vect{X}_{t+1}^i - \vect{e_i}\right)}{\vect{\mu}} \right) \ \middle | \ \vect{S}_t = \vect{v} \right] \\
&= \sum_i \prob{j_{t+1} = i \ \middle | \ \vect{S}_t = \vect{v} }{} \ \E e^{\theta \ip{(\vect{X}_{t+1}^i - \vect{e}_i)}{\vect{\mu}}} \\
 \label{mgf-bound} &\leq \max_i \ \E e^{\theta \ip{(\vect{X}_1^i - \vect{e}_i)}{\vect{\mu}}}\\
\nonumber &=: \psi(\theta).
\end{align*}
Using this estimate, which is independent of the previous state vector, $\vect{v}$, we see that
\begin{align*}
\E \left[ e^{\theta \ip{\vect{S}_{t+1}}{\vect{\mu}}} \ \middle | \ \vect{S}_t \right] &= \sum_{\vect{v} \geq \vect{0}} \mathbbm{1}_{\{\vect{S}_t = \vect{v} \} } e^{\theta \ip{\vect{v}}{\vect{\mu}}} \ \phi_{\vect{v}}(\theta) \\
&\leq \psi(\theta) \ e^{\theta \ip{\vect{S}_{t}}{\vect{\mu}}}.
\end{align*}
This implies that
\begin{equation}
\label{RWGenFnBd}
\E e^{\theta \ip{\vect{S}_t - \vect{S}_0}{\vect{\mu}}} \leq \left[ \psi(\theta) \right]^t .
\end{equation}
By equation (\ref{mgf-deriv}), for each $i = 1, \ldots, d$
\begin{align*}
\frac{d}{d\theta} \left. \left( \E e^{\theta \ip{(\vect{X}_1^i - \vect{e}_i)}{\vect{\mu}}} \right) \right |_{\theta = 0} &= (\rho - 1) \mu_i \\
&< x (\rho-1) \mu_{\text{max}} \\
&= \frac{d}{d\theta} \left. \left( e^{\theta x (\rho-1) \mu_{\max}} \right) \right|_{\theta = 0}
\end{align*}
since $\rho >1$ and $\vect{\mu} > \vect{0}$.  Because $\psi(0) = 1$, this implies that there exist $\theta_1, \eta_1 > 0$ such that
\begin{equation*}
e^{-\eta_1} := \psi(\theta_1) \ e^{-\theta_1 x (\rho-1) \mu_{\max}} < 1.
\end{equation*}
Thus, by Markov's inequality and inequality (\ref{RWGenFnBd}):
\begin{align*}
\prob{\ip{\vect{S}_t - \vect{S}_0}{\vect{\mu}} \geq x (\rho-1) \mu_{\text{max}} \ t }{} \ &= \prob{e^{\theta_1 \ip{\vect{S}_t - \vect{S}_0}{\vect{\mu}}} \geq e^{\theta_1 x (\rho-1) \mu_{\text{max}} \ t} }{} \\
&\leq \E e^{\theta_1 \ip{\vect{S}_t - \vect{S}_0}{\vect{\mu}}} e^{-\theta_1 x (\rho-1) \mu_{\max}} \\
& \leq e^{-\eta_1 t}.
\end{align*}
So we have proven inequality (\ref{largeDev1}).  To prove inequality (\ref{largeDev2}), we need the following estimate for each $i = 1, \ldots, d$:
\begin{align*}
\frac{d}{d\theta} \left. \left( \E e^{ - \theta \ip{(\vect{X}_1^i - \vect{e}_i)}{\vect{\mu}}} \right) \right |_{\theta = 0} &= - (\rho - 1) \mu_i \\
&< - y (\rho-1) \mu_{\min} \\
&= \frac{d}{d\theta} \left. \left( e^{- \theta y (\rho-1) \mu_{\min}} \right) \right|_{\theta = 0}
\end{align*}
since $\rho > 1$ and $\vect{\mu} > \vect{0}$.  Because $\psi(0) = 1$, this implies that there exist $\theta_2, \eta_2 > 0$ such that 
\begin{equation*}
e^{-\eta_2} := \psi(-\theta_2) \ e^{\theta_2 y (\rho-1) \mu_{\min}} < 1.
\end{equation*}
Thus, by Markov's inequality and inequality (\ref{RWGenFnBd}):
\begin{align*}
\prob{\ip{\vect{S}_t - \vect{S}_0}{\vect{\mu}} \leq y (\rho-1) \mu_{\min} \ t }{} \ &= \prob{e^{-\theta_2 \ip{\vect{S}_t - \vect{S}_0}{\vect{\mu}}} \geq e^{-\theta_2 y (\rho-1) \mu_{\min} \ t} }{} \\
&\leq \E e^{-\theta_2 \ip{\vect{S}_t - \vect{S}_0}{\vect{\mu}}} e^{\theta_2 y (\rho-1) \mu_{\min}} \\
& \leq e^{-\eta_2 t}.
\qedhere
\end{align*}
\end{proof}

We are now ready to prove the following lemma:
\begin{lem}
\label{activeVerticesLemma}
There exists a constant $\beta$ depending on $a_1, \ldots, a_d$ such that if $s = \beta \log n$ then
\begin{equation*}
\prob{0 < \norm{\vect{\A}_s}_1 \leq m}{} = O\left(n^{-(d+1)}\right).
\end{equation*}
and
\begin{equation*}
\prob{\norm{\vect{\A}_s}_1 \geq K \log n}{} = O\left(n^{-(d+1)}\right).
\end{equation*}
where $K \log n >m$.
\end{lem}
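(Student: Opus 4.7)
The plan is to derive both estimates from the couplings of $\vect{\A}_t$ with the upper-bounding random walk $\vect{S}_t$ of equation~(\ref{upperBoundCoupling}) (so that $\vect{\A}_t\leq\vect{S}_t$ coordinate-wise) and with a lower-bounding branching random walk $\vect{W}_t$ having $\mathrm{Binomial}(a_j n(1-\delta),p)$ offspring for a fixed $\delta>0$ with $(1-\delta)\lambda>\lambda_c$; both are supercritical, so Lemma~\ref{largeDeviationsLemma} gives positive Perron eigenvalues $\rho,\rho'>1$ and positive eigenvectors $\vect{\mu},\vect{\mu}'$ of $\vect{M}_\lambda$ and $\vect{M}_{(1-\delta)\lambda}$, respectively. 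For the second inequality I apply Doob's maximal inequality to the positive supermartingale $e^{\theta\ip{\vect{S}_t-\vect{S}_0}{\vect{\mu}}}/[\psi(\theta)]^t$ constructed in the proof of~(\ref{largeDev1}); taking $s=\beta\log n$ with $\beta$ and $K$ large enough yields $\prob{\max_{t\leq s}\norm{\vect{S}_t}_1>K\log n}{}\leq n^{-(d+1)}$, so the good event $G:=\{\norm{\vect{\A}_t}_1\leq K\log n\text{ for all }t\leq s\}$ satisfies $\prob{G^c}{}=O(n^{-(d+1)})$, immediately giving the second inequality.

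For the first inequality, on $G$ an elementary count shows $|\mathcal{N}^j(\vect{v}_t)\cap U_t|\geq a_j n-O(\log^3 n)$: each previous focal vertex removes at most one vertex from the direction-$j$ line through $\vect{v}_t$ (two such lines either coincide or are disjoint), and the step-$4$ corrections contribute $O(s\cdot|A_t|^2)$ more. For large $n$ this is at least $a_j n(1-\delta)$, so the step increment of $\vect{\A}_t$ stochastically dominates that of $\vect{W}_t$. Since stochastic dominance reverses mgf inequalities at negative arguments, on $G_t\cap\{\vect{\A}_t\neq\vect{0}\}$ the conditional mgf satisfies
\begin{equation*}
\E\!\left[\,e^{-\theta\ip{\vect{\A}_{t+1}-\vect{\A}_t}{\vect{\mu}'}}\,\middle|\,\mathcal{F}_t\right]\leq\psi^W(-\theta),
\end{equation*}
where $\psi^W$ plays the role of $\psi$ from the proof of~(\ref{largeDev2}) for $\vect{W}$. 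Letting $\tau:=\inf\{t:G_t^c\text{ or }\vect{\A}_t=\vect{0}\}$, applying the optional stopping theorem to the positive supermartingale $e^{-\theta\ip{\vect{\A}_{t\wedge\tau}-\vect{\A}_0}{\vect{\mu}'}}/[\psi^W(-\theta)]^{t\wedge\tau}$, and optimizing over $\theta>0$, I obtain
\begin{equation*}
\prob{\ip{\vect{\A}_s-\vect{\A}_0}{\vect{\mu}'}\leq y(\rho'-1)\mu'_{\min}s,\;\vect{\A}_s\neq\vect{0},\;G}{}\leq e^{-\eta_2 s}
\end{equation*}
for suitable $y<1$ and $\eta_2>0$. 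Since $\{0<\norm{\vect{\A}_s}_1\leq m\}\subseteq\{\ip{\vect{\A}_s}{\vect{\mu}'}\leq m\mu'_{\max},\,\vect{\A}_s\neq\vect{0}\}$, taking $\beta\geq(d+1)/\min(\eta_1,\eta_2)$ (with $\eta_1$ from the application of~(\ref{largeDev1}) above) and $m<y(\rho'-1)\mu'_{\min}\beta\log n/\mu'_{\max}$ yields $\prob{0<\norm{\vect{\A}_s}_1\leq m}{}\leq\prob{G^c}{}+e^{-\eta_2 s}=O(n^{-(d+1)})$.

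The main obstacle is verifying the mgf dominance: it requires the combinatorial bookkeeping above to confirm that on $G$ the offspring counts of $\vect{\A}_t$ genuinely dominate those of $\vect{W}_t$ at every step, taking into account both step~$3$ and step~$4$ of the cluster-discovering algorithm. A secondary technical point is the use of the stopping time $\tau$, which prevents extinction or exiting $G$ from breaking the mgf bound---after extinction the conditional mgf of the frozen process is identically $1$, whereas the bound $\psi^W(-\theta)<1$ is what drives the exponential decay in the second display.
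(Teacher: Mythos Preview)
Your approach is valid but differs from the paper's in how the first inequality is obtained. Both of you derive the upper bound on $\norm{\vect{\A}_s}_1$ by passing to $\vect{S}_t$ and applying the large-deviation estimate~(\ref{largeDev1}); your use of Doob's maximal inequality is a mild strengthening that you need later, and it goes through with the same exponent after handling the anomalous first step via~(\ref{firstStepBd}).

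For the lower bound, however, the paper does \emph{not} introduce a second, lower-bounding walk at this stage. Instead it applies the \emph{lower} large-deviation estimate~(\ref{largeDev2}) to the same upper-bounding walk $\vect{S}_t$, obtaining $\norm{\vect{S}_s}_1\geq\frac{d+2}{\gamma_2}\log n$ with probability $1-O(n^{-(d+1)})$, and then shows that the number of births present in $\vect{S}_t$ but lost in $\vect{\A}_t$ up to time $s$ is stochastically bounded by a $\text{Binomial}\big(d\beta K(\log n)^2,\tfrac{\lambda}{n}\big)$ variable, hence is at most $d+2$ with probability $1-O(n^{-(d+1)})$. Combining gives $\norm{\vect{\A}_s}_1\geq\norm{\vect{S}_s}_1-(d+2)>m$ on the good event. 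This ``same walk for both tails, plus bound the discrepancy by a constant'' argument avoids building the second auxiliary process entirely and sidesteps the step-by-step stochastic domination check you flag as the main obstacle. Your route---constructing $\vect{W}_t$ with $\text{Binomial}(a_jn(1-\delta),p)$ offspring and transferring~(\ref{largeDev2}) to $\vect{\A}_t$ via the conditional mgf inequality and the stopping time $\tau$---also works, and is essentially the machinery the paper reserves for the next phase (Section~\ref{section_supercritical1}), but it requires the combinatorial verification that the modified step~4 keeps $|\mathcal{N}^j(\vect{v}_t)\cap U_t|\geq a_jn(1-\delta)$ for every $t\leq s$. That verification is correct (the common-neighbor removals in step~4 are precisely what rule out the ``entire line already deleted'' scenario), but the paper's discrepancy argument needs only the cruder observation that the \emph{total} number of slots missing from all unseen neighborhoods up to time $s$ is $O(\log^2 n)$, which is why its bound on lost births is so cheap.
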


\begin{proof}[Proof of Lemma \ref{activeVerticesLemma}]
We wish to apply Lemma \ref{largeDeviationsLemma} to $\vect{S}_t$, but the first step of this process is not the same as the subsequent steps, so we must handle this case separately.  Notice that in the first step of the cluster-discovering algorithm, the initial vertex does not have a type, and is free to search in any of the $d$ directions, while all subsequent vertices are assigned types, and thus expand like a branching process.  Thus, for each $i=1,\ldots, d$, $S_1^{(i)}$ is distributed as a $\text{Binomial}(a_i n, \frac{\lambda}{n})$ random variable.  We can thus bound this step of the process:
\begin{align}
\prob{\ip{\vect{S}_1 - \vect{S}_0}{\vect{\mu}} \geq (d+2) \log n}{} &= \prob{e^{\ip{\vect{S}_1 - \vect{S}_0}{\vect{\mu}}} \geq e^{(d+2) \log n} }{}  \nonumber \\
&\leq n^{-(d+2)} \E e^{\ip{\vect{S}_1 - \vect{S}_0}{\vect{\mu} } } \nonumber \\
&\leq n^{-(d+2)} \exp \left[ \sum_{i=1}^{d} a_i \lambda (e^{\mu_i} - 1) - \mu_{\min} \right] \nonumber \\
\label{firstStepBd}
& = O\left( n^{-(d+2)} \right)
\end{align}
We only need this upper bound, since the first step of $\vect{S}_t$ is stochastically bounded below by any subsequent step, so inequality (\ref{largeDev2}) still holds.  We now apply Lemma \ref{largeDeviationsLemma} to $\vect{S}_t$ at time $s = \beta \log n$ with $x=2$ and $y = \frac{1}{2}$ (and use the inequality in (\ref{firstStepBd}) to handle the first step) to yield
\begin{align*}
\prob{\ip{\vect{S}_s - \vect{S}_0}{\vect{\mu}} \geq \left[2 \beta (\rho-1) \mu_{\max} + (d+2) \right] \ \log n }{} \ &\leq n^{-\eta_1 \beta} + O\left( n^{-(d+2)} \right) \\
\prob{\ip{\vect{S}_s - \vect{S}_0}{\vect{\mu}} \leq \frac{1}{2} \beta (\rho-1) \mu_{\min}\ \log n }{} \ &\leq n^{-\eta_2 \beta}. 
\end{align*}
If we choose $\beta >0$ such that $\eta_1 \beta > d+1$, $\eta_2 \beta > d+1$ and $\beta > \frac{2 \mu_{\max} (d+2)}{\mu_{\min} (\rho-1) \gamma_2}$ (recall that $m = \frac{d+1}{\gamma_2} \log n$), and we let $K = \frac{2 \beta (\rho-1) \mu_{\max} + (d+2)}{\mu_{\min}} + \beta +1 $, then
\begin{align}
\label{observedSetUpperBd}
\prob{\norm{\vect{S}_s}_1 + s \geq K \log n }{} &\leq n^{-(d+1)} \\
\label{activeSetLowerBd}
\prob{\norm{\vect{S}_s}_1 \leq \frac{d+2}{\gamma_2} \log n}{} &\leq n^{-(d+1)}.
\end{align}
To obtain the above inequalities, we used the estimates $\ip{\vect{S}_s - \vect{S}_0}{\vect{\mu}} \geq \mu_{\min} \norm{\vect{S}_s}_1 - \norm{\vect{S}_0}_{\infty}$, $\ip{\vect{S}_s - \vect{S}_0}{\vect{\mu}} \leq \mu_{\max} \norm{\vect{S}_s}_1$ and $\norm{\vect{S}_0}_{\infty} / \mu_{\min} \leq \log n$ for $n$ sufficiently large ($\norm{\vect{S}_0}_{\infty} = 1$).  Equation~(\ref{observedSetUpperBd}) implies the second part of the lemma, since $\norm{\vect{\A}_s}_1 \leq \norm{\vect{S}_s}_1$.

When $\norm{\vect{S}_s}_1 + s \leq K \log n$,  we have:
\begin{equation}
\label{unseenLowerBD}
\abs{U_s \cap \mathcal{N}^i (\vect{v})} \geq a_i n - K \log n
\end{equation}
for $\vect{v}$ of type $j \neq i$.  This is because $\abs{A_s} \leq \norm{\vect{S}_s}_1$, $\abs{R_s} \leq s$ (with equality here if $\A_t$ is still alive at time $s$), and the maximum number of neighbors that $A_s \cup R_s$ can have in $\mathcal{N}^i(\vect{v})$ is $\abs{A_s \cup R_s}$ as long as $\vect{v}$ is not of type $i$.  Therefore, provided $\vect{\A}_s \neq \vect{0}$, the number of births that occur in $\vect{S}_t$ which are lost in $\vect{\A}_t$ for $t \leq s $ is at most
\begin{equation}
\label{extraBirths}
\sum_{t = 0}^{s-1} \ \ \sum_{i=1}^d \ \sum_{k = 1}^{a_i n - \abs{U_t \cap \mathcal{N}^i (\vect{v_t})}} \eta_k^{(t,i)} \leq Y^{(s)}
\end{equation}
where $Y^{(s)} \sim \text{Binomial}\left(d \beta K (\log n)^2, \frac{\lambda}{n} \right)$.  So, conditioned on $\norm{\vect{S}_s}_1 + s \leq K \log n$, we can bound the difference between $\vect{S}_s$ and $\vect{\A}_s$ by a constant:
\begin{align}
\prob{\norm{\vect{\A}_s}_1 >0, \norm{\vect{S}_s - \vect{\A}_s}_1 \geq d+2}{} & \leq \prob{Y^{(s)} \geq d+2}{} \nonumber \\
& = \sum_{k = d+2}^{d \beta K (\log n)^2} {d \beta K (\log n)^2 \choose k} \left(\frac{\lambda}{n}\right)^k \left(1 - \frac{\lambda}{n}\right)^{d \beta K (\log n)^2 - k} \nonumber \\
& \leq \sum_{k=d+2}^{d \beta K (\log n)^2} \left(\frac{\lambda d \beta K (\log n)^2}{n}\right)^{k} \nonumber \\
\label{lostBirthsBd}
&= O\left(n^{-(d+1)}\right).
\end{align}
Finally, combining inequalities (\ref{observedSetUpperBd}), (\ref{activeSetLowerBd}), and (\ref{lostBirthsBd}) yields
\begin{align*}
\prob{0 < \norm{\vect{\A}_s}_1 \leq \frac{d+2}{\gamma_2} \log n - (d+2)}{} = O\left(n^{-(d+1)}\right), \nonumber \\
\prob{0 < \norm{\vect{\A}_s}_1 \leq \frac{d+1}{\gamma_2} \log n }{} = O\left(n^{-(d+1)}\right), \nonumber \\
\prob{0 < \norm{\vect{\A}_s}_1 \leq m}{} = O\left(n^{-(d+1)}\right).
\end{align*}
\end{proof}
This means that, with high probability, at time $s$ the process has either died out or there are at least $m$ active vertices (in $A_s$).


\subsection{Merging Clusters of Size $r = n^{d-4/3}$}
\label{section_supercritical3}
At this point, with high probability, the cluster-discovering algorithm started at a given vertex, $\vect{v}$, has either died out by time $s = \beta \log n$, or will continue to survive until at least time $r = n^{d-4/3}$.  If the process has died out, then the size of the cluster containing $\vect{v}$ is at most $\beta \log n$.  What we now intend to show is that, if the cluster containing the vertex $\vect{v_1}$ and the cluster containing the vertex $\vect{v_2}$ each have at least $r$ vertices, then $\vect{v_1}$ and $\vect{v_2}$ are in the same connected component of the random site subgraph with high probability.

The clusters containing $\vect{v_1}$ and $\vect{v_2}$ will be generated as follows (we use $\vect{v_{1/2}}$ to denote ``$\vect{v_1}$ or $\vect{v_2}$, respectively'', and likewise for similar notation):
\begin{enumerate}
\item Start the cluster-discovering algorithm, $(R_t, A_t, U_t)$, with parameter $p$ at $R_0 = \emptyset$, $A_0 = \vect{v_{1/2}}$ and $U_0 = V \setminus A_0$.  Continue until time $s = \beta \log n$.

\item Start the cluster-discovering algorithm $(R'_t, A'_t, U'_t)$ with parameter $p_1$ at $R'_0 = R_s$, $A'_0 = A_s$ and $U'_0 = U_s$.  Continue until time $r= n^{d - 4/3}$.

\item Let $\Upsilon_{1/2} = R'_{r} \setminus R_s$.
\end{enumerate}

Note that, conditioned on the survival of each process, $\abs{\Upsilon_{1/2}} = r$, and $\Upsilon_{1/2} \subsetneq C_{\vect{v_{1/2}}}$.

\begin{lem}
\label{clusterMergeLemma}
If $\Upsilon_1$ and $\Upsilon_2$ are defined for $\vect{v_1}$ and $\vect{v_2}$ as above and $\abs{\Upsilon_1} = \abs{\Upsilon_2} = r$, then:
\begin{equation*}
\prob{C_{\vect{v_1}} \neq C_{\vect{v_2}}}{p_2} = O(n^{-(d+1)})
\end{equation*}
\end{lem}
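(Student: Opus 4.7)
The plan is a sprinkling argument based on $p = 1-(1-p_1)(1-p_2)$: the $p$-graph is the independent union of a $p_1$-graph and a $p_2$-graph, and the two $p_1$-explorations that produced $\Upsilon_1,\Upsilon_2$ have revealed only $p_1$-occupancy. Hence every vertex $\vect{x}$ not already declared $p_1$-occupied is, independently of the revealed history, $p$-occupied with probability at least $p_2 = \Theta(1/n)$, and if such an $\vect{x}$ is occupied and adjacent to some $\vect{u}\in\Upsilon_1$ and some $\vect{w}\in\Upsilon_2$, then the path $\vect{u}-\vect{x}-\vect{w}$ lies in the $p$-graph and forces $C_{\vect{v_1}}=C_{\vect{v_2}}$. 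My first step is to apply Lemma~\ref{planeBoundLemma} to each of the two $p_1$-explorations (both have $m=\Theta(\log n),\; s=\Theta(\log n)=o(n^{2/3})$), reducing at cost $O(n^{-(d+1)})$ to the ``spreading'' event $\mathcal{S}$ on which no $2$-plane $\mathcal{P}_{(i_1,\ldots,i_{d-2})}^{(\ell_1,\ldots,\ell_{d-2})}$ contains more than $K_d\,n^{2/3}(\log n)^{d-2}$ vertices of $\Upsilon_1\cup\Upsilon_2$.

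\textbf{Counting bridge candidates.} On $\mathcal{S}$, I lower-bound the number $\tilde N$ of distinct unexamined vertices $\vect{x}$ adjacent to at least one vertex in each $\Upsilon_j$. Such $\vect{x}$ are exactly (up to factor-two over-counting) the common neighbors of distance-two pairs $(\vect{u},\vect{w})\in\Upsilon_1\times\Upsilon_2$, and the number $N$ of these pairs splits, by the pair $(i,j)$ of differing coordinates, as $\sum_{\mathcal{P}}|\Upsilon_1\cap\mathcal{P}|\,|\Upsilon_2\cap\mathcal{P}|$ plus lower-order corrections, with $\mathcal{P}$ ranging over the $\Theta(n^{d-2})$ 2-planes fixing the coordinates outside $\{i,j\}$. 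Combining $\sum_\mathcal{P}|\Upsilon_j\cap\mathcal{P}|=\binom{d}{2}r$ with the spreading cap $|\Upsilon_j\cap\mathcal{P}|\leq K_d n^{2/3}(\log n)^{d-2}$ forces each $\Upsilon_j$ onto $\Omega(n^{d-2}/\mathrm{polylog}(n))$ planes; exploiting that the two explorations' coin-flips are independent (any vertex's $p_1$-fitness is a fixed i.i.d.\ sample accessed identically by either exploration) then gives $N=\Omega(n^{d-2/3}/\mathrm{polylog}(n))$. Only $O(rn)=O(n^{d-1/3})$ vertices were examined during the two explorations, so discarding candidates both of whose common neighbors are examined loses a negligible fraction, and $\tilde N=\Omega(n^{d-2/3}/\mathrm{polylog}(n))$ distinct unexamined candidates remain.

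\textbf{Sprinkling and main obstacle.} Each of the $\tilde N$ unexamined bridge candidates is independently $p$-occupied with probability $\geq p_2$, so
\begin{equation*}
\prob{\text{no bridge candidate is occupied}}{p_2} \;\leq\; (1-p_2)^{\tilde N} \;\leq\; \exp\!\left(-\Omega\!\left(n^{d-5/3}/\mathrm{polylog}(n)\right)\right),
\end{equation*}
which is $o(n^{-(d+1)})$ for every $d\geq 2$; combined with $\prob{\mathcal{S}^c}{}=O(n^{-(d+1)})$ this yields the claim. I expect the main technical effort to lie in the lower bound on $N$: Lemma~\ref{planeBoundLemma} controls only upper bounds on per-plane masses, so converting the total-mass and per-plane-cap information into a lower bound on $\sum_{\mathcal{P}}|\Upsilon_1\cap\mathcal{P}||\Upsilon_2\cap\mathcal{P}|$ requires substantial overlap of the plane supports of the two $\Upsilon$'s; this does not follow from spreading alone (two spread-out sets can in principle sit in disjoint planes) and must be extracted from the independent randomness driving the two explorations, most likely via a first/second-moment computation showing that the planes meeting $\Upsilon_1$ fill nearly all of the $\Theta(n^{d-2})$ available planes and that $\Upsilon_2$ intersects a constant fraction of them.
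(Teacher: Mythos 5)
Your sprinkling strategy differs fundamentally from the paper's: you attempt a single-stage sprinkle that builds a length-$2$ bridge $\vect{u}-\vect{x}-\vect{w}$ through one freshly occupied vertex, whereas the paper builds a length-$d$ path through $d-1$ independently sprinkled vertices (one sprinkle per direction $\vect{e}_d,\ldots,\vect{e}_4$, then two more within a $3$-dimensional sub-torus). This difference is not cosmetic, and the gap you flag in your ``main obstacle'' paragraph is in fact fatal. For a vertex $\vect{x}$ to be adjacent to both $\vect{u}\in\Upsilon_1$ and $\vect{w}\in\Upsilon_2$ you need $d(\vect{u},\vect{w})\le 2$, but in the Hamming torus a generic pair is at Hamming distance $d$, so only a $\Theta(n^{2-d})$ fraction of $\Upsilon_1\times\Upsilon_2$ pairs are even eligible. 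Whether any such pairs exist at all is governed by the overlap of the plane supports of $\Upsilon_1$ and $\Upsilon_2$, which Lemma~\ref{planeBoundLemma} does not control. That lemma gives only per-plane \emph{upper} bounds, so it can force $\Upsilon_1$ to occupy $\Omega(n^{d-2}/\mathrm{polylog})$ of the $\Theta(n^{d-2})$ $2$-planes of each direction type, but that is still an $O(1/\mathrm{polylog})$ fraction and two such collections can be disjoint; the cluster-discovering dynamics do not obviously forbid $\Upsilon_1$ and $\Upsilon_2$ from having disjoint projections onto every coordinate axis for $d\geq 3$, in which case $N=\tilde N=0$. You acknowledge needing a first/second-moment argument to rule this out, but that argument is the crux of the lemma, not a deferred detail, and it is likely to be at least as hard as what it replaces.

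The paper sidesteps the overlap requirement entirely. By extending $\Upsilon_1$ and $\Upsilon_2$ via independent sprinkles in complementary coordinate directions, it makes the pair of extended sets dense along different axes, so the question becomes whether two sets with large projections onto transverse coordinate hyperplanes must share an $\vect{e}_2$-line --- and this is a clean deterministic consequence of the projection lower bounds (Loomis--Whitney for $\Upsilon_1$ and the spreading cap for $\Upsilon_2$). In particular, every pair $(\vect{u},\vect{w})$ at Hamming distance up to $d$ (i.e.\ every pair) becomes a candidate for a bridge path, eliminating the distance-$2$ bottleneck that your approach cannot escape. There is also a secondary issue in your count: the union of the two $p_1$-explorations' examined neighborhoods has $O(rn)=O(n^{d-1/3})$ vertices while $\tilde N=O(n^{d-2/3}\,\mathrm{polylog})$, so ``discarding candidates both of whose common neighbors are examined loses a negligible fraction'' is not justified as stated (though the deeper sprinkling point, that $p_2$-fitness is still unrevealed for examined vertices, can be salvaged). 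The main gap, however, is the missing overlap lemma, and patching it would amount to inventing a different proof.
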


\begin{proof}[Proof of Lemma \ref{clusterMergeLemma}]
We will employ a sprinkling technique using the probability reserved in the first step, where the lower bounding process was defined using probability parameter $p_1 = \lambda_1 / n$.  Observe that we can write $p = p_1 + p_2 - p_1 p_2$, where $p = \lambda / n$ and $p_2$ is defined in this way.  The random site subgraph of $G$ with parameter $p$, call it $G_p$, can then be seen as $G_p = \overline{G_{p_1} \cup G_{p_2} }$, where $G_{p_1}$ and $G_{p_2}$ are random site subgraphs of $G$ that are independently generated, the union is taken over their vertex sets, and the bar denotes the inclusion of all edges from $G$ between vertices in $G_{p_1} \cup G_{p_2}$.  We will further subdivide $p_2$ in the same manner, but first it is crucial to note that:
\begin{equation*}
p_2 = \frac{\lambda - \lambda_1}{n} + \frac{\lambda_1 (\lambda - \lambda_1)}{n ( n - \lambda_1)} = \frac{\epsilon}{n} + O(n^{-2}),
\end{equation*}
where $\epsilon = \lambda - \lambda_1$.  We can now write
\begin{equation}
\label{sprinklingProb}
p_2 = \sum_{i=1}^{2d-4} p'_i  - \sum_{1 \leq i< j \leq 2d-4} p'_i p'_j + \cdots + (-1)^{2d-3} \prod_{i=1}^{2d-4} p'_i
\end{equation}
where $p'_i = \epsilon_i / n + O(n^{-2})$ and $\sum_i \epsilon_i = \epsilon$.  This is done merely by repeatedly subdividing $G_p$ into a union of independent subgraphs as described above.  Also, for the sprinkling argument to work as intended (so we can avoid dependencies), we will consider only the vertices in $U_s$, since the vertices in $V \setminus U_s$ have already been fully considered for inclusion in $G_p$.  Since $\abs{U_s \cap \mathcal{N}^i (\vect{v})} \geq a_i n - K \log n$ whenever $\vect{v} \in U_s$ (see inequality (\ref{unseenLowerBD})), this restriction will not significantly affect our estimates below.

We will first treat the cases $d = 2, 3$ separately, then use an induction argument to prove the Lemma for $d \geq 4$.


{\bf Case 1: $d=2$.}  We first observe that with high probability no row or column ($\mathcal{P}_{(i)}^{(\ell)}$ for $i \in \{1, 2\}$ and $\ell \in \{1, \ldots, a_i n \}$) contains more than $\log^2 n$ vertices in $\Upsilon_{1/2}$ because:
\begin{align*}
\prob{\abs{ \mathcal{P}_{(i)}^{(\ell)} \cap \Upsilon_{1/2} } \geq \log^2 n \text{ for some } (i) \text{ and } (\ell) }{}   &\leq \prob{\abs{ \mathcal{P}_{(i)}^{(\ell)} \cap G_p } \geq \log^2 n \text{ for some } (i) \text{ and } (\ell) }{} \\
& \leq 2 a_1 n \left[ e^{-\log^2 n} \sum_{k=1}^{a_1 n}  { {a_1 n} \choose k} p^k e^{k} (1-p)^{a_1 n - k} \right] \\
& \leq 2a_1 n \exp \left[ -\log^2 n + (e-1) a_1 \lambda \right] \\
& = O(n^{1-\log n}).
\end{align*}
If no column ($\mathcal{P}_{(1)}^{(\ell)}$ for $\ell \in \{1, \ldots, a_1 n \}$) has more than $\log^2 n$ vertices that are in $\Upsilon_{1/2}$, then by the Pigeonhole Principle there are at least $r / \log^2 n$ columns with at least one vertex in $\Upsilon_{1/2}$.  Thus we have:
\begin{align}
\label{projectionBound2d}
\prob{\abs{\mathcal{P}_{(1)}^{(\ell)} \cap \mathcal{N}^1 ( \Upsilon_{1/2}) }  \leq \frac{n^{2/3}}{\log^2 n} }{} \leq O(n^{1-\log n}).
\end{align}
We denote by $B_{p_2} (\Upsilon_{1/2})$ the set of vertices in $\mathcal{N}^1(\Upsilon_{1/2})$ that are independently made occupied with parameter $p_2$.  Note that $B_{p_2} (\Upsilon_{1/2})$ contains independent copies of any vertices that have been previously observed.  If there is an edge in $G$ between $\Upsilon_1$ and $\Upsilon_2$ then $\vect{v_1}$ and $\vect{v_2}$ are in the same component and we are finished.  Otherwise, we seek to bound the probability that there is no edge in $G$ between $B_{p_2} (\Upsilon_1)$ and $B_{p_2} (\Upsilon_{2})$.  If there is a column that contains a vertex from each of these two sets then the two sets are connected.  Let $E_{\ell}$ be the event that column $\ell$ ($\mathcal{P}_{(1)}^{(\ell)}$) contains a vertex from $B_{p_2} (\Upsilon_1)$ and $B_{p_2} (\Upsilon_{2})$, and let $E$ be the disjoint union of these events:
\begin{align*}
E_{\ell} &= \{ \mathcal{P}_{(1)}^{(\ell)} \cap B_{p_2} (\Upsilon_1) \neq \emptyset \} \cap \{ \mathcal{P}_{(1)}^{(\ell)} \cap B_{p_2} (\Upsilon_2) \neq \emptyset \}, \\
E &= \bigsqcup_{\ell=1}^{a_1 n} E_{\ell},
\end{align*}
We wish to bound the likelihood of the complementary event:
\begin{align*}
\prob{E_{\ell}^c}{} &\leq 1 - \left[1 - (1- p_2)^{n^{2/3} / \log^2 n} \right]^2 \\
& \leq 1 - \left[ 1 - \exp \left( -\frac{ \epsilon}{n^{1/3}\log^2 n} \right) \right]^2 \\
& \leq 1 - \left[ \frac{ \epsilon}{n^{1/3}\log^2 n}  - \frac{\epsilon^2 }{2 n^{2/3}\log^4 n} \right]^2 \\
& \lae 1 - \frac{\epsilon^2 }{2n^{2/3}\log^4 n} \\
& \leq \exp\left( - \frac{\epsilon^2}{2n^{2/3} \log^4 n} \right).
\end{align*}
In the second and fifth lines above we have applied the bound $1-x \leq e^{-x}$, in the third line we used $1 - e^{-x} \geq x - x^2/2$ for $x \geq 0$, and the fourth line bounds the third for sufficiently large $n$ (which we denote by $\lae$).  Because the events $E_{\ell}$ depend on disjoint columns of vertices, they are independent, so we have:
\begin{align}
\prob{E^c}{} &= \prob{\bigcap_{\ell = 1}^{a_1 n} E_{\ell}^c}{} \nonumber \\
&= \prod_{\ell =1}^{a_1 n} \prob{E_{\ell}^c}{} \nonumber \\
\label{notConnect2d}
& \lae \exp\left( - \frac{a_1 \epsilon^2 n^{1/3} }{2 \log^4 n} \right).
\end{align}
So, combining (\ref{projectionBound2d}) and (\ref{notConnect2d}), for $d = 2$ the probability that $\vect{v_1}$ and $\vect{v_2}$ are not in the same component is bounded above by $O(n^{1-\log n})$.


{\bf Case 2: $d=3$.}   We would like to begin by sprinkling vertices in orthogonal directions from each of the sets $\Upsilon_1$ and $\Upsilon_2$, but we would like to sprinkle in the directions that will give the greatest likelihood of adjoining the two sets.  For this, we need the following fact about subsets of $\mathbb{Z}^3$.

\begin{lem}
\label{projectionLemma}
Let $A \subset \mathbb{Z}^3$, and let $A_x$, $A_y$, and $A_z$, respectively, be projections of $A$ onto the coordinate planes, $yz$-plane, $xz$-plane, and $xy$-plane.  Then 
\begin{equation*}
\abs{A}^{2/3} \leq \abs{A_x}^{1/3} \abs{A_y}^{1/3} \abs{A_z}^{1/3}.
\end{equation*}
\end{lem}

Lemma~\ref{projectionLemma} is a restatement of the main theorem of~\cite{SM:1983} for $d=3$, and it implies that at least one of the projections has size at least $\abs{A}^{2/3}$.  Applying this to the set $\Upsilon_1$, suppose without loss of generality (since the constants $a_i$ will not play a significant part here) that $\abs{\mathcal{P}_{(3)}^{(\ell_3)} \cap \mathcal{N}^3 (\Upsilon_1)} \geq r^{2/3}$ for all $\ell_3$.  Throughout this section, we will also condition on the complement of the event in Lemma~\ref{planeBoundLemma}, which implies that no two-dimensional plane orthogonal to one of the coordinate axes contains more than $n^{2/3} \log n$ vertices in $\Upsilon_2$.  In particular, this implies that $\abs{\mathcal{P}_{(2, 3)}^{(\ell_2, \ell_3)} \cap \Upsilon_2} \leq C n^{2/3} \log n$  for all $(\ell_2, \ell_3)$.  This bounds the number of points that can map to the same point when the set $\Upsilon_2$ is projected onto the span of $\vect{e_2}$ and $\vect{e_3}$, so the Pigeonhole Principle implies that $\abs{ \mathcal{P}_{(1)}^{(\ell_1)} \cap \mathcal{N}^1 (\Upsilon_2) } \geq \frac{n}{C \log n}$.

We begin by sprinkling occupied vertices independently in the neighborhood $\mathcal{N}^3 (\Upsilon_1)$ with probability $p_1' = \epsilon_1/n$ and in the neighborhood $\mathcal{N}^1 (\Upsilon_2)$ with probability $p_2' = \epsilon_2/n$.  We will refer to the sets of newly added vertices (including those which may have already been present in $G_{p_1}$) as $B_{p_1'}^3 (\Upsilon_1)$ and $B_{p_2'}^1 (\Upsilon_2)$, respectively.  The key observation here is that $\vect{v_1}$ and $\vect{v_2}$ will be in the same component if $\mathcal{N}^2(B_{p_1'}^3(\Upsilon_1)) \cap \mathcal{N}^2(B_{p_2'}^1(\Upsilon_2)) \neq \emptyset $.  That is, if there exists a pair $(\ell_1, \ell_3)$ such that $\mathcal{P}_{(1,3)}^{(\ell_1, \ell_3)}$ contains a vertex in $B_{p_1'}^3(\Upsilon_1)$ and a vertex in $B_{p_2'}^1(\Upsilon_2)$, then $\vect{v_1}$ and $\vect{v_2}$ must be in the same component.  Denote by $E_{(\ell_1, \ell_3)}$ the event that $\mathcal{P}_{(1,3)}^{(\ell_1, \ell_3)}$ contains a vertex in $B_{p_1'}^3(\Upsilon_1)$ and a vertex in $B_{p_2'}^1(\Upsilon_2)$, and let $E$ be the disjoint union of these events over all pairs $(\ell_1, \ell_3)$:
\begin{align*}
E_{(\ell_1, \ell_3)} &= \{ \mathcal{P}_{(1,3)}^{(\ell_1, \ell_3)} \cap B_{p_1'} (\Upsilon_1) \neq \emptyset \} \cap \{ \mathcal{P}_{(1,3)}^{(\ell_1, \ell_3)} \cap B_{p_2'} (\Upsilon_2) \neq \emptyset \}, \\
E &= \bigsqcup_{\ell_1=1}^{a_1 n} \bigsqcup_{\ell_3=1}^{a_3 n} E_{(\ell_1, \ell_3)}.
\end{align*}

We now define the following notation:
\begin{align*}
k_{\ell_1}^{(1)} &:= \# \left\{ v_2\in \{1, \ldots, a_2 n\} \, : \, \vect{v} \in \mathcal{P}_{(1)}^{(\ell_1)} \cap \Upsilon_1 \right\}, \\
k_{\ell_3}^{(2)} &:= \# \left\{ v_2\in \{1, \ldots, a_2 n\} \, : \, \vect{v} \in \mathcal{P}_{(3)}^{(\ell_3)} \cap \Upsilon_2 \right\}.
\end{align*}
In other words, $k_{\ell_1}^{(1)}$ is the number of columns, $\mathcal{P}_{(1,2)}^{(\ell_1,\ell_2)} \subset \mathcal{P}_{(1)}^{(\ell_1)}$, which contain at least one vertex in $\Upsilon_1$.  Similarly, $k_{\ell_3}^{(2)}$ is the number of rows, $\mathcal{P}_{(2,3)}^{(\ell_2,\ell_3)} \subset \mathcal{P}_{(3)}^{(\ell_3)}$, which contain at least one vertex in $\Upsilon_2$ (see Figure~\ref{mergeFig3d}).  Note that $k_{\ell_1}^{(1)} = O(n^{2/3} \log n)$ and $k_{\ell_3}^{(2)} = O(n^{2/3} \log n)$, since we are conditioning on the complement of the event in Lemma~\ref{planeBoundLemma}.

\begin{figure}[h]
\begin{center}
\input{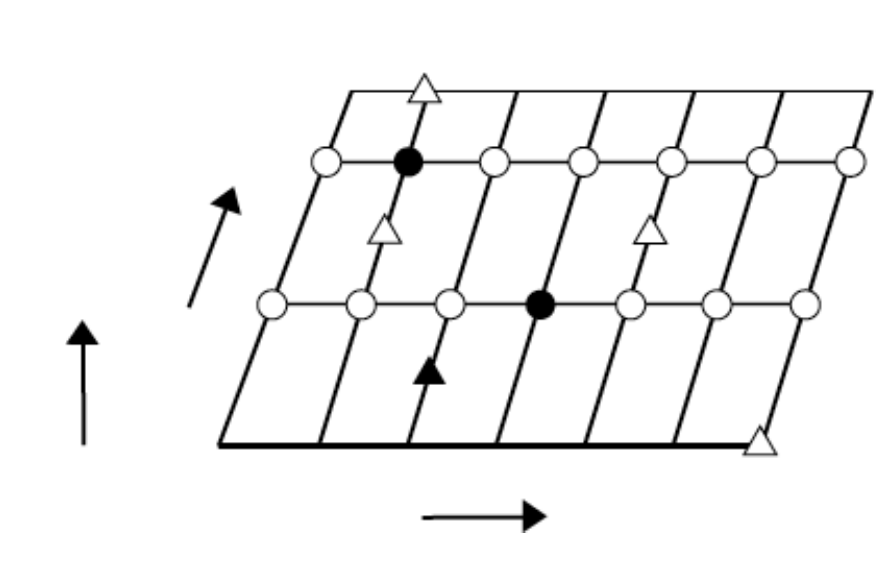_t}
\end{center}
\caption{\label{mergeFig3d}
Solid triangles and circles are vertices in $\Upsilon_1$ and $\Upsilon_2$, respectively.  Empty triangles are neighbors of vertices in $\Upsilon_1$ in the $v_3$ direction, and empty circles are neighbors of vertices in $\Upsilon_2$ in the $v_1$ direction.  Then $k_{\ell_1}^{(1)}$ is the number of triangles (solid or empty) in the column $v_1 = \ell_1$ (e.g. $k_{0}^{(1)} = 0$, $k_{1}^{(1)} = 2$), and $k_{\ell_3}^{(2)}$ is the number of rows with circles in the plane $v_3 = \ell_3$ (here $k_{\ell_3}^{(2)}=2$).  Also, $E_{(\ell_1, \ell_3)}$ is the event that in the column $v_1 = \ell_1$, at least one triangle (solid or empty) is made occupied with parameter $p'_1$ and at least one circle (solid or empty) is made occupied with parameter~$p'_2$. }
\end{figure}

We can now write the probabilities of $E_{(\ell_1, \ell_3)}^c$ and $E^c$, conditional on the values of $k_{\ell_1}^{(1)}$ and $k_{\ell_3}^{(2)}$, as:
\begin{align*}
\prob{E_{(\ell_1, \ell_3)}^c}{} &= 1 - \left(1 - \left(1-\frac{\epsilon_1}{n} \right)^{k_{\ell_1}^{(1)}} \right) \left(1 - \left(1-\frac{\epsilon_2}{n} \right)^{k_{\ell_3}^{(2)}} \right), \\
\prob{E^c}{} &= \prod_{\ell_1 = 1}^{a_1 n}  \prod_{\ell_3 = 1}^{a_3 n} \left[1 - \left(1 - \left(1-\frac{\epsilon_1}{n} \right)^{k_{\ell_1}^{(1)}} \right) \left(1 - \left(1-\frac{\epsilon_2}{n} \right)^{k_{\ell_3}^{(2)}} \right) \right],
\end{align*}
and we wish to bound these quantities.  To this end, we observe that for large $n$:
\begin{align}
\nonumber 1 - \left(1 - \frac{\epsilon_{1/2}}{n} \right)^k &=  \frac{k \epsilon_{1/2}}{n} - \sum_{i=2}^k {k \choose i} \left(\frac{\epsilon_{1/2}}{n} \right)^i (-1)^i \\
\nonumber &\geq  \frac{k \epsilon_{1/2}}{n} - \left( \frac{k\epsilon_{1/2}}{n} \right)^2 \sum_{i = 0}^{k} \left(\frac{k \epsilon_{1/2}}{n} \right)^i \\
\label{asymptoticBound}
& \gae  \frac{k \epsilon_{1/2}}{2n},
\end{align}
provided $k = o(n)$, which is the case for $k_{\ell_1}^{(1)}$ and $k_{\ell_3}^{(2)}$.  Applying this inequality and $1-x \leq e^{-x}$ gives:
\begin{align}
\nonumber \prob{E^c}{} &\lae \prod_{\ell_1 = 1}^{a_1 n}  \prod_{\ell_3 = 1}^{a_3 n} \left[ 1 - \left(\frac{k_{\ell_1}^{(1)} \epsilon_1}{2n} \right) \left(\frac{k_{\ell_3}^{(2)} \epsilon_2}{2n} \right) \right] \\
\label{intersectBd1}
&\leq \exp \left[ - \frac{\epsilon_1 \epsilon_2}{4n^2} \sum_{\ell_1 = 1}^{a_1 n} k_{\ell_1}^{(1)} \sum_{\ell_3 = 1}^{a_3 n} k_{\ell_3}^{(2)} \right].
\end{align}
What remains now is to obtain lower bounds for the sums in the above expression.  It follows from the definitions of $k_{\ell_1}^{(1)}$ and $k_{\ell_3}^{(2)}$ that for any $\ell_3$
\begin{align*}
\sum_{\ell_1=1}^{a_1 n} k_{\ell_1}^{(1)} = \abs{ \mathcal{P}_{(3)}^{(\ell_3)} \cap \mathcal{N}^3 (\Upsilon_1) } \geq r^{2/3} = n^{10/9},
\end{align*}
and for any $\ell_1$
\begin{align*}
\sum_{\ell_3=1}^{a_3 n} k_{\ell_3}^{(2)} = \abs{ \mathcal{P}_{(1)}^{(\ell_1)} \cap \mathcal{N}^1 (\Upsilon_2) } \geq \frac{n}{C \log n}.
\end{align*}
Applying the last two lower bounds to (\ref{intersectBd1}), then handling the exceptional case where some plane contains more than $C n^{2/3} \log n$ vertices in $\Upsilon_{1/2}$ by using Lemma~\ref{planeBoundLemma}, yields
\begin{align*}
\prob{E^c}{} &\lae \exp \left[ - \frac{\epsilon_1 \epsilon_2}{4 n^2} \left( \frac{n}{C \log n} \right) \left(n^{10/9}\right) \right] + O \left( n^{-(d+1)} \right)\\
& = O \left( n^{-(d+1)} \right).
\end{align*}


{\bf Case 3: $d\geq4$.}  We will ultimately use the same argument here as in Case 2, but first we will sprinkle vertices in the directions $\vect{e_d}, \vect{e_{d-1}}, \ldots, \vect{e_4}$ sequentially.  At that point we will have $\prod_{i = 4}^d a_i \ n^{d-3}$ independent $3$-dimensional Hamming Tori in which the two sets can connect.  So that we can independently connect vertices to the sets $\Upsilon_{1/2}$, we will write $p_2$ as in equation~(\ref{sprinklingProb}) with $k = 2d-4$.  We will denote by $B^i_{p}(J)$ the set of vertices that are independently made occupied with probability $p$ in the neighborhood $\mathcal{N}^i(J)$.

We now begin by sprinkling vertices in the $\vect{e_d}$ direction to generate the sets $B_{p'_{1/2}}^d (\Upsilon_{1/2})$.  By conditioning on the complement of the event in Lemma~\ref{planeBoundLemma}, we have that for every $(\ell_1, \ldots, \ell_{d-1})$
\begin{equation*}
\abs{ \mathcal{P}_{(1, \ldots, d-1)}^{(\ell_1, \ldots, \ell_{d-1})} \cap \Upsilon_{1/2} } \leq C n^{2/3} (\log n)^{d-2} .
\end{equation*}
This bound says that when the sets $\Upsilon_{1/2}$ are projected onto the span of $\vect{e_1}, \ldots, \vect{e_{d-1}}$,  no more than this many vertices can be mapped onto the same point.  Thus, by the Pigeonhole Principle, we have a lower bound on the size of the projection:
\begin{equation*}
\abs{ \mathcal{P}_{(d)}^{(\ell_d)} \cap \mathcal{N}^d(\Upsilon_{1/2}) } \geq \frac{r}{C n^{2/3} (\log n)^{d-2}} = \frac{n^{d-2}}{C (\log n)^{d-2}}
\end{equation*}
for any $\ell_d$.  Conditional on this bound, we obtain the following bound on the the number of newly occupied vertices in $B_{p'_{1/2}}^d (\Upsilon_{1/2})$ in a fixed $(d-1)$-dimensional Hamming torus:
\begin{align*}
\prob{\abs{ \mathcal{P}_{(d)}^{(\ell_d)} \cap B_{p'_{1/2}}^d(\Upsilon_{1/2}) } \leq \frac{n^{d-3}}{(\log n)^{d-1}} }{}           &   \leq \exp\left[ \frac{n^{d-3}}{(\log n)^{d-1}}\right]                \cdot             \E \exp \left[ - \abs{ \mathcal{P}_{(d)}^{(\ell_d)} \cap B_{p'_{1/2}}^d(\Upsilon_{1/2}) } \right] \\
& = \exp\left[ \frac{n^{d-3}}{(\log n)^{d-1}}\right]  \cdot    \left[ 1 - \frac{\epsilon_{1/2}}{n} (1 - e^{-1}) \right]^{\abs{ \mathcal{P}_{(d)}^{(\ell_d)} \cap \mathcal{N}^d(\Upsilon_{1/2}) } }\\
&\leq \exp\left[\frac{n^{d-3}}{(\log n)^{d-1}} - \frac{\epsilon_{1/2}\, n^{d-3}}{C (\log n)^{d-2}} (1 - e^{-1}) \right] \\
&\lae \exp \left[ -\eta'_3 \frac{n^{d-3}}{(\log n)^{d-2}} \right]
\end{align*}
for some constant $\eta'_3 >0$.  In the first line above we exponentiated and applied Markov's inequality.  In the second line we used the moment generating function for a binomial random variable, and in the third line we used the bound on the size of the projection that we obtained above and the inequality $1-x \leq e^{-x}$.  By the union bound, we have the following bound on the number of newly occupied vertices in each $(d-1)$-dimensional Hamming Torus:
\begin{align}
\label{firstProjection}
\nonumber \prob{\exists \ \ell_d \ \text{s.t.} \  \abs{ \mathcal{P}_{(d)}^{(\ell_d)} \cap B_{p'_{1/2}}^d(\Upsilon_{1/2}) } \leq \frac{n^{d-3}}{(\log n)^{d-1}}}{}    &\leq a_d n \cdot  \exp \left[ -\eta'_3 \frac{n^{d-3}}{(\log n)^{d-2}} \right] \\
&\lae  \exp \left[ -\eta_3 \frac{n^{d-3}}{(\log n)^{d-2}} \right]
\end{align}
for some constant $\eta_3 >0$.  For $d = 4$ this is as far as we will need to project before applying the argument from Case 2.  For that argument to work, and so we can continue to project down to lower dimensions for $d>4$, we will need an estimate for the maximum number of vertices in  $\mathcal{P}_{(d)}^{(\ell_d)} \cap B_{p'_{1/2}}^d(\Upsilon_{1/2})$ that can map to a single vertex when the set is projected onto a plane of one less dimension.  This will be easy by again using Markov's inequality and the moment generating function for a binomial random variable.  For any $1 \leq i_1 < i_2 < \cdots < i_{d-2} < d$:
\begin{align}
\label{firstProjectionLB}
\nonumber &\prob{ \exists \ (\ell_{i_1}, \ldots, \ell_{i_{d-2}}, \ell_d) \ \text{s.t.} \ \abs{ \mathcal{P}_{(i_1, \ldots, i_{d-2}, d)}^{(\ell_{i_1}, \ldots, \ell_{i_{d-2}}, \ell_d)} \cap B^d_{p'_{1/2}}(\Upsilon_{1/2}) } \geq \log^2 n }{} \\
\nonumber &\leq \prod_i a_i \ n^{d-1} \ \exp\left[ - \log^2 n \right] \cdot \E \exp\left[ \abs{ \mathcal{P}_{(i_1, \ldots, i_{d-2}, d)}^{(\ell_{i_1}, \ldots, \ell_{i_{d-2}}, \ell_d)} \cap B^d_{p'_{1/2}}(\Upsilon_{1/2}) } \right] \\
\nonumber &= \prod_i a_i \ n^{d-1} \ \exp\left[ - \log^2 n \right] \cdot \left [ 1 + \frac{\epsilon_{1/2}}{n} (e - 1) \right]^{\abs{ \mathcal{P}_{(i_1, \ldots, i_{d-2}, d)}^{(\ell_{i_1}, \ldots, \ell_{i_{d-2}}, \ell_d)} \cap \mathcal{N}^d(\Upsilon_{1/2})} } \\ 
\nonumber &\leq \prod_i a_i \ n^{d-1} \ \exp\left[ - \log^2 n + \frac{\epsilon_{1/2}\, C\, n^{2/3} (\log n)^{d-2} }{n} (e - 1) \right] \\
& \lae \exp\left[-\eta_4 \, \log^2 n \right].
\end{align}
The third and fourth lines above are conditional on the complement of the event in Lemma~\ref{planeBoundLemma}, and assume that the sets $\Upsilon_{1/2}$ have been determined accordingly.

Now, if $d > 4$, we will repeat the above argument until we are within $3$-dimensional Hamming tori.  Suppose, for some $k$ ($1\leq k \leq d-4$), that
\begin{equation}
\label{projectionAssumption1}
\abs{ \mathcal{P}_{(d-k+1, \ldots, d)}^{(\ell_{d-k+1},\ldots, \ell_d)} \cap B^{d-k+1}_{p'_{(2k-1)/(2k)}}(B^{d-k+2}_{p'_{(2k-3)/(2k-2)}}( \cdots (B^d_{p'_{1/2}}(\Upsilon_{1/2})\, ) \cdots )} \geq N
\end{equation}
for all $(\ell_{d-k+1},\ldots, \ell_d)$ where $N = \omega(n \log^3 n)$, and that
\begin{equation}
\label{projectionAssumption2}
\abs{ \mathcal{P}_{(i_1, \ldots,i_{d-k-1}, d-k+1, \ldots, d)}^{(\ell_{i_1}, \ldots, \ell_{i_{d-k-1}}, \ell_{d-k+1},\ldots, \ell_d)} \cap B^{d-k+1}_{p'_{(2k-1)/(2k)}}(B^{d-k+2}_{p'_{(2k-3)/(2k-2)}}( \cdots (B^d_{p'_{1/2}}(\Upsilon_{1/2})\, ) \cdots )} \leq \log^2 n
\end{equation}
for all $1 \leq i_1 < \cdots < i_{d-k-1} < d-k+1$ and $(\ell_{i_1}, \ldots, \ell_{i_{d-k-1}}, \ell_{d-k+1},\ldots, \ell_d)$.
The first inequality above says that the sets $\Upsilon_{1/2}$ each have at least $N$ (most recently occupied) neighbors in each of the $\Theta(n^{k})$ $(d-k)$-dimensional sub-Hamming tori after newly occupied vertices have been sprinkled successively in the directions $\vect{e_d}, \ldots, \vect{e_{d-k+1}}$.  The second inequality says that no line contains more than $\log^2 n$ of the most recently added occupied vertices.  Under these two assumptions, and by the Pigeonhole Principle, we have that the size of the projection onto the span of $\vect{e_1}, \ldots, \vect{e_{d-k-1}}$ is bounded by:
\begin{equation*}
\abs{ \mathcal{P}_{(d-k, \ldots, d)}^{(\ell_{d-k},\ldots, \ell_d)} \cap \mathcal{N}^{d-k}( B^{d-k+1}_{p'_{(2k-1)/(2k)}}( \cdots (B^d_{p'_{1/2}}(\Upsilon_{1/2})\, ) \cdots )} \geq \frac{N}{\log^2 n}
\end{equation*}
for any $(\ell_{d-k},\ldots, \ell_d)$.  Using Markov's inequality and the moment generating function for a binomial random variable as we did above, we have that
\begin{align}
\nonumber &\prob{\exists \ (\ell_{d-k},\ldots, \ell_d) \ \text{s.t.} \ \abs{ \mathcal{P}_{(d-k, \ldots, d)}^{(\ell_{d-k},\ldots, \ell_d)} \cap B^{d-k}_{p'_{(2k+1)/(2k+2)}}( B^{d-k+1}_{p'_{(2k-1)/(2k)}}( \cdots (B^d_{p'_{1/2}}(\Upsilon_{1/2})\, ) \cdots )} \leq \frac{N}{n \log^3 n} }{} \\
\label{inductiveProjection}
&\hspace{6cm} \lae \exp \left[ - \eta_{2k+1} \frac{N}{n \log^2 n} \right]
\end{align}
for some constant $\eta_{2k+1} >0 $.  Similarly, for all  $1 \leq i_1 < \cdots < i_{d-k-1} < d-k$ and all $\vect{\ell} = (\ell_{i_1}, \ldots, \ell_{i_{d-k-2}}, \ell_{d-k},\ldots, \ell_d)$,
\begin{align*}
\abs{ \mathcal{P}_{(i_1, \ldots,i_{d-k-2}, d-k, \ldots, d)}^{(\ell_{i_1}, \ldots, \ell_{i_{d-k-2}}, \ell_{d-k},\ldots, \ell_d)} \cap \mathcal{N}^{d-k}( B^{d-k+1}_{p'_{(2k-1)/(2k)}}(\cdots (B^d_{p'_{1/2}}(\Upsilon_{1/2})\, ) \cdots )} \leq \log^2 n,
\end{align*}
which implies that
\begin{align}
\label{inductiveProjectionUB}
\nonumber &\prob{ \exists \ \vect{\ell} \ \text{s.t.} \ \abs{ \mathcal{P}_{(i_1, \ldots,i_{d-k-2}, d-k, \ldots, d)}^{(\ell_{i_1}, \ldots, \ell_{i_{d-k-2}}, \ell_{d-k},\ldots, \ell_d)} \cap B^{d-k}_{p'_{(2k+1)/(2k+2)}}( B^{d-k+1}_{p'_{(2k-1)/(2k)}}(\cdots (B^d_{p'_{1/2}}(\Upsilon_{1/2})\, ) \cdots )} \geq \log^2 n}{} \\
& \hspace{6cm} \lae \exp \left[ -\eta_{2k+2} \log^2 n \right].
\end{align}
Putting together inequalities (\ref{firstProjection}) through (\ref{inductiveProjectionUB}) inductively gives (for $d\geq 4$): 
\begin{align}
\label{projectionLB}
\nonumber &\prob{ \exists \ \vect{\ell} \ \text{s.t.} \ \abs{ \mathcal{P}_{(4, \ldots, d)}^{(\ell_{4}, \ldots, \ell_d)} \cap B^{4}_{p'_{(2d-7)/(2d-6)}}( B^{5}_{p'_{(2d-9)/(2d-8)}}(\cdots (B^d_{p'_{1/2}}(\Upsilon_{1/2})\, ) \cdots )} \leq \frac{n}{(\log n)^{4d-13}}}{} \\
& \hspace{5.5cm} \lae \exp \left[ -\eta_{2d-5} \log^2 n \right].
\end{align}
Here, $\vect{\ell} = (\ell_{4},\ldots, \ell_d)$, and henceforth $\vect{\ell}$ will be the vector in the superscript of the next ``$\mathcal{P}$''. We used the union bound to control the probability above, and the constant $\eta_{2d-5}>0$ is chosen such that the sum of the probabilities of the events involved is bounded asymptotically by the above expression. These events are still conditional on complement of the event in Lemma~\ref{planeBoundLemma}.  Likewise, we have:
\begin{align}
\label{projectionUB1}
\nonumber &\prob{ \exists \ \vect{\ell} \ \text{s.t.} \ \abs{ \mathcal{P}_{(1, 2, 4, \ldots, d)}^{(\ell_1, \ell_2, \ell_{4}, \ldots, \ell_d)} \cap B^{4}_{p'_{(2d-7)}}( B^{5}_{p'_{(2d-9)}}(\cdots (B^d_{p'_{1}}(\Upsilon_{1})\, ) \cdots )} \geq \log^2 n}{} \\
& \hspace{4.5cm} \lae \exp \left[ -\eta_{2d-4} \log^2 n \right],
\end{align}
and
\begin{align}
\label{projectionUB2}
\nonumber &\prob{ \exists \ \vect{\ell} \ \text{s.t.} \ \abs{ \mathcal{P}_{(2, 3, 4, \ldots, d)}^{(\ell_2, \ell_3, \ell_{4}, \ldots, \ell_d)} \cap B^{4}_{p'_{(2d-6)}}( B^{5}_{p'_{(2d-8)}}(\cdots (B^d_{p'_{2}}(\Upsilon_{2})\, ) \cdots )} \geq \log^2 n}{} \\
& \hspace{4.5cm} \lae \exp \left[ -\eta_{2d-3} \log^2 n \right].
\end{align}

Within each $3$-dimensional Hamming torus, $\mathcal{P}_{(4, \ldots, d)}^{(\ell_{4}, \ldots, \ell_d)}$, we can now apply the argument from Case $2$.  This time we will not need Lemma~\ref{projectionLemma} because we have a better bound on the number of (most recently) occupied vertices in any line.  Fixing $(\ell_4, \ldots, \ell_d)$, if we project the vertices most recently added to $\Upsilon_1$ onto the plane spanned by $\vect{e_1}$ and $\vect{e_2}$, then inequalities (\ref{projectionLB}) and (\ref{projectionUB1}) imply (by the Pigeonhole Principle) that with high probability
\begin{align}
\label{projectionPigeonhole1}
\abs{ \mathcal{P}_{(3, \ldots, d)}^{(\ell_{3}, \ldots, \ell_d)} \cap \mathcal{N}^3 (B^{4}_{p'_{(2d-7)}}(\cdots (B^d_{p'_{1}}(\Upsilon_{1})\, ) \cdots )} \geq \frac{n}{(\log n)^{4d-11}}.
\end{align}
Likewise, if we project the vertices added most recently to $\Upsilon_2$ onto the plane spanned by $\vect{e_2}$ and $\vect{e_3}$ and apply the Pigeonhole Principle with inequalities (\ref{projectionLB}) and (\ref{projectionUB2}), then the size of the projection within each $3$-dimensional sub-Hamming torus is at least:
\begin{align}
\label{projectionPigeonhole2}
\abs{ \mathcal{P}_{(1, 4, \ldots, d)}^{(\ell_{1}, \ell_{4}, \ldots, \ell_d)} \cap \mathcal{N}^1 (B^{4}_{p'_{(2d-6)}}(\cdots (B^d_{p'_{2}}(\Upsilon_{2})\, ) \cdots )} \geq \frac{n}{(\log n)^{4d-11}}.
\end{align}

Now, as in Case 2, we will condition on the last two inequalities, and sprinkle new vertices independently in the $\vect{e_3}$ direction from $B^{4}_{p'_{(2d-7)}}(\cdots (B^d_{p'_{1}}(\Upsilon_{1})\, ) \cdots )$ and in the $\vect{e_1}$ direction from $B^{4}_{p'_{(2d-6)}}(\cdots (B^d_{p'_{2}}(\Upsilon_{2})\, ) \cdots )$.  The key observation here is that $\vect{v_1}$ and $\vect{v_2}$ will be in the same component if
\begin{equation*}
\mathcal{N}^2 (B^{3}_{p'_{(2d-5)}}(\cdots (B^d_{p'_{1}}(\Upsilon_{1})\, ) \cdots )\ \bigcap  \ \mathcal{N}^2 (B^{1}_{p'_{(2d-4)}}(\cdots (B^d_{p'_{2}}(\Upsilon_{2})\, ) \cdots ) \neq \emptyset.
\end{equation*}
That is, if there exists a vector $(\ell_1, \ell_3, \ell_4, \ldots, \ell_d)$ such that $\mathcal{P}^{(\ell_1, \ell_3, \ell_4, \ldots, \ell_d)}_{(1, 3, 4, \ldots, d)}$ contains a vertex in \\ $B^{3}_{p'_{(2d-5)}}(\cdots (B^d_{p'_{1}}(\Upsilon_{1})\, ) \cdots )$ and a vertex in $B^{1}_{p'_{(2d-4)}}(\cdots (B^d_{p'_{2}}(\Upsilon_{2})\, ) \cdots )$.  If we define the following events
\begin{equation*}
E_{(\ell_1, \ell_3, \ell_4, \ldots, \ell_d)} := \left\{ B^{3}_{p'_{(2d-5)}}(\cdots (B^d_{p'_{1}}(\Upsilon_{1})\, ) \cdots ) \, \cap \, B^{1}_{p'_{(2d-4)}}(\cdots (B^d_{p'_{2}}(\Upsilon_{2})\, ) \cdots ) \, \cap \, \mathcal{P}^{(\ell_1, \ell_3, \ell_4, \ldots, \ell_d)}_{(1, 3, 4, \ldots, d)} \neq \emptyset \right\}
\end{equation*}
\begin{equation*}
E_{(\ell_4, \ldots, \ell_d)} := \bigsqcup_{\ell_1=1}^{a_1 n} \bigsqcup_{\ell_3=1}^{a_3 n} E_{(\ell_1, \ell_3, \ell_4, \ldots, \ell_d)}
\end{equation*}
\begin{equation*}
E := \bigsqcup_{\ell_4=1}^{a_4 n} \cdots \bigsqcup_{\ell_d=1}^{a_d n} E_{(\ell_4, \ldots, \ell_d)} 
\end{equation*}
then $E$ is contained in the event that $\vect{v_1}$ and $\vect{v_2}$ are in the same component.  To estimate the probabilities of these events, let us temporarily fix $(\ell_4, \ldots, \ell_d)$, and define the variables:
\begin{align*}
k_{\ell_1}^{(1)} &:= \# \left\{ v_2\in \{1, \ldots, a_2 n\} \, : \, \vect{v} \in \mathcal{P}_{(1, 4, \ldots, d)}^{(\ell_1, \ell_4, \ldots, \ell_d)} \cap B^{4}_{p'_{(2d-7)}}(\cdots (B^d_{p'_{1}}(\Upsilon_{1})\, ) \cdots ) \right\}, \\
k_{\ell_3}^{(2)} &:= \# \left\{ v_2\in \{1, \ldots, a_2 n\} \, : \, \vect{v} \in \mathcal{P}_{(3, 4, \ldots, d)}^{(\ell_3, \ell_4, \ldots, \ell_d))} \cap B^{4}_{p'_{(2d-6)}}(\cdots (B^d_{p'_{2}}(\Upsilon_{2})\, ) \cdots ) \right\}.
\end{align*}
In other words, $k_{\ell_1}^{(1)}$ is the number of columns, $\mathcal{P}_{(1, 2, 4, \ldots, d)}^{(\ell_1, \ell_2, \ell_4, \ldots, \ell_d)} \subset \mathcal{P}_{(1, 4, \ldots, d)}^{(\ell_1, \ell_4, \ldots, \ell_d)}$, which contain at least one vertex in $B^{4}_{p'_{(2d-7)}}(\cdots (B^d_{p'_{1}}(\Upsilon_{1})\, ) \cdots )$.  Similarly, $k_{\ell_3}^{(2)}$ is the number of rows, $\mathcal{P}_{(2, 3, 4, \ldots, d)}^{(\ell_2, \ell_3, \ell_4, \ldots, \ell_d)} \subset \mathcal{P}_{(3, 4, \ldots, d)}^{(\ell_3, \ell_4, \ldots, \ell_d)}$, which contain at least one vertex in $B^{4}_{p'_{(2d-6)}}(\cdots (B^d_{p'_{2}}(\Upsilon_{2})\, ) \cdots )$.  We can now estimate the probability of $E^c$, conditional on the complements of the events in inequalities (\ref{projectionLB}) - (\ref{projectionUB2}).
\begin{align}
\nonumber \prob{E_{(\ell_1, \ell_3, \ell_4, \ldots, \ell_d)}^c }{} &= \ 1 - \left(1 - \left(1-p'_{(2d-5)}\right)^{k_{\ell_1}^{(1)}}  \right) \left(1 - \left(1-p'_{(2d-4)} \right)^{k_{\ell_3}^{(2)}} \right) \\
\nonumber \prob{E_{(\ell_4, \ldots, \ell_d)}^c}{} &\lae \prod_{\ell_1 = 1}^{a_1 n} \prod_{\ell_3 = 1}^{a_3 n} \left[ 1 - \left(1 - \left(1-\frac{\epsilon_{(2d-5)}}{2n} \right)^{k_{\ell_1}^{(1)}}  \right) \left(1 - \left(1-\frac{\epsilon_{(2d-4)}}{2n} \right)^{k_{\ell_3}^{(2)}} \right) \right] \\
\nonumber &\lae \prod_{\ell_1 = 1}^{a_1 n} \prod_{\ell_3 = 1}^{a_3 n} \left[ 1 - \frac{\epsilon_{(2d-5)} \epsilon_{(2d-4)}}{16n^2} k_{\ell_1}^{(1)} k_{\ell_3}^{(2)} \right] \\
\nonumber &\leq \exp \left[ - \frac{\epsilon_{(2d-5)} \epsilon_{(2d-4)}}{16n^2} \sum_{\ell_1 = 1}^{a_1 n} k_{\ell_1}^{(1)} \sum_{\ell_3 = 1}^{a_3 n}  k_{\ell_3}^{(2)} \right]\\
\nonumber &\leq \exp \left[ - \frac{\epsilon_{(2d-5)} \epsilon_{(2d-4)}}{16n^2} \left( \frac{n}{(\log n)^{4d-11}} \right)^2 \right] \\
\nonumber &= \exp \left[ - \frac{\epsilon_{(2d-5)} \epsilon_{(2d-4)}}{16} \cdot \frac{1}{(\log n)^{8d - 22}} \right] \\
\label{clusterMergeBound}
\prob{E^c}{} &\lae \exp \left[- \frac{\epsilon_{(2d-5)} \epsilon_{(2d-4)}}{16} \cdot \frac{\prod_{i=4}^d a_i \ n^{d-3}}{(\log n)^{8d - 22}} \right]
\end{align}
In the second line of the above inequalities, the probability on the left is asymptotically less than the quantity on the right because $p'_i = \epsilon_i / n + O(n^2)$.  The third line is obtained from inequality (\ref{asymptoticBound}), which holds here because $k_{\ell_1}^{(1)}, k_{\ell_3}^{(2)} = o(n)$ with high probability from inequalities (\ref{projectionUB1}) and (\ref{projectionUB2}).  The fourth line is an application of the bound $1-x \leq e^{-x}$.  The fifth line is from the definitions of $k_{\ell_1}^{(1)}$ and $k_{\ell_3}^{(2)}$, the observation that
\begin{align*}
\sum_{\ell_1 = 1}^{a_1 n} k_{\ell_1}^{(1)} &= \abs{ \mathcal{P}_{(3, 4, \ldots, d)}^{(\ell_3, \ell_{4}, \ldots, \ell_d)} \cap \mathcal{N}^3(B^{4}_{p'_{(2d-7)}}(\cdots (B^d_{p'_{1}}(\Upsilon_{1})\, ) \cdots )},  \\
\sum_{\ell_3 = 1}^{a_3 n} k_{\ell_3}^{(2)} &= \abs{ \mathcal{P}_{(1, 4, \ldots, d)}^{(\ell_1, \ell_{4}, \ldots, \ell_d)} \cap \mathcal{N}^1(B^{4}_{p'_{(2d-6)}}(\cdots (B^d_{p'_{2}}(\Upsilon_{2})\, ) \cdots )},
\end{align*}
and inequalities (\ref{projectionPigeonhole1}) and  (\ref{projectionPigeonhole2}).  In the last line, we merely took the intersection of the events, $E_{(\ell_4, \ldots, \ell_d)}^c$, over all $(\ell_4, \ldots, \ell_d)$, which are independent.
When combined, inequalities (\ref{projectionLB}) - (\ref{projectionUB2}), (\ref{clusterMergeBound}), and Lemma~\ref{planeBoundLemma} imply that for $d\geq 4$, the probability of $\vect{v_1}$ and $\vect{v_2}$ not being in the same component is at most $O(n^{-(d+1)})$.  This completes the proof of Lemma~\ref{clusterMergeLemma}.

\end{proof}


\subsection{The Size of the Giant Component}
\label{section_supercritical4}
To complete the proof of Theorem~\ref{thmb}, we need to demonstrate that the proportion of remaining vertices included in the giant component approaches $(1-q) >0$ in probability.  To this end we will prove Lemma~\ref{giantCompSizeLemma}, but first we will define $q$.

Recall from earlier that $q_i = \prob{\vect{Z}_t = \vect{0} \text{ for some } t \ \middle | \ \vect{Z}_0 = \vect{e_i} }{}$ is the extinction probability for a multitype branching process in which for any $k = 1, \ldots, d$, $(Z_1^{j} \ | \ \vect{Z}_0 = \vect{e_k}) \sim \text{Poisson}(\lambda a_j)$ if $j\neq k$ and $(Z_1^{k} \ | \ \vect{Z}_0 = \vect{e_k}) \equiv 0$, and initially there is one individual of type $i$.  The initial vertex in the cluster discovery process gives birth to $\text{Binomial}\left(a_i n, \frac{\lambda}{n}\right)$ neighbors of type $i$ for each $i$, and henceforth proceeds like the multitype process in which no vertex can give birth to its own type.  The limiting branching process is one in which each binomial birth event is replaced with a Poisson birth event with the same mean.  If we consider $(d-1)$ independent copies of this Poisson multitype branching process with the modified initial step, and we define $q$ to  be the extinction probability of one of these copies, then the collective process will have the same distribution for all time as the multitype branching process that starts with one individual of each of the $d$ types.  This implies that $\prod_i q_i = q^{(d-1)}$.  From the theory of multitype branching processes~\cite{branching}, the vector $(q_1, \ldots, q_d)$ is the solution to $\vect{f}(\vect{x}) = \vect{x}$ for $\vect{x} \in (0, 1)^d$, where
\begin{equation*}
f_i(\vect{x}) = \exp \left[ - \lambda \sum_{j \neq i} a_j (1 - x_j) \right].
\end{equation*}
Thus, we have implicitly defined $q <1$.

\begin{lem}
\label{giantCompSizeLemma}
\begin{equation}
\frac{\# \left\{\vect{v}\in V \ : \ \abs{C_{\vect{v}}} \leq \beta \log n, \xi_{\vect{v}} = 1 \right\} }{\lambda \left(\prod_i a_i \right) n^{(d-1)}} \ \longrightarrow \ q \hspace{1cm} \text{in probability.}
\end{equation}
\end{lem}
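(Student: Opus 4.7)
The plan is to apply the second-moment method. Define $\mathbb{1}_{\vect{v}} := \mathbbm{1}\{\xi_{\vect{v}} = 1, \abs{C_{\vect{v}}} \leq \beta \log n\}$ and $N_s := \sum_{\vect{v} \in V} \mathbb{1}_{\vect{v}}$. Since the denominator $\lambda \left(\prod_i a_i\right) n^{d-1}$ equals $p\abs{V}$ (up to lower-order terms), Chebyshev's inequality will yield the claim once we establish
\begin{equation*}
\E[N_s] = q \cdot \lambda \left(\prod_i a_i\right) n^{d-1}(1+o(1)) \quad \text{and} \quad \Var(N_s) = o(n^{2(d-1)}).
\end{equation*}

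For the first moment, vertex-transitivity of the Hamming torus gives $\E[N_s] = \abs{V}\cdot p \cdot \mathbb{P}(\abs{C_{\vect{v}}} \leq \beta \log n \mid \xi_{\vect{v}} = 1)$ for any fixed $\vect{v}$. Since the cluster-discovery algorithm processes exactly one vertex of $A_t$ per step, conditional on $\xi_{\vect{v}} = 1$ one has the identity $\{\abs{C_{\vect{v}}} \leq \beta \log n\} = \{\vect{\A}_s = \vect{0}\}$ at $s = \beta \log n$. By the coupling in equation~(\ref{upperBoundCoupling}) and the estimate~(\ref{lostBirthsBd}), $\norm{\vect{S}_s - \vect{\A}_s}_1 \leq d+2$ with probability $1 - O(n^{-(d+1)})$, and Lemma~\ref{activeVerticesLemma} shows that $\norm{\vect{S}_s}_1$ is either $0$ or exceeds $K\log n \gg d+2$ with the same probability. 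Consequently $\mathbb{P}(\vect{\A}_s = \vect{0}) = \mathbb{P}(\vect{S}_s = \vect{0}) + O(n^{-(d+1)})$, and the latter converges to the extinction probability $q$ of the limiting Poisson multitype branching process by Binomial-to-Poisson convergence of the offspring distribution together with continuity of the fixed-point equation $\vect{f}(\vect{x}) = \vect{x}$.

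For the variance, decompose $\Var(N_s) = \E[N_s] + \sum_{\vect{v}\neq\vect{w}} \text{Cov}(\mathbb{1}_{\vect{v}}, \mathbb{1}_{\vect{w}})$. The diagonal term contributes $O(n^{d-1})$. Split the off-diagonal sum by Hamming distance. The $O(n^{d+1})$ pairs with $d(\vect{v},\vect{w}) = 1$ satisfy $\text{Cov} \leq \mathbb{P}(\xi_{\vect{v}} = \xi_{\vect{w}} = 1) = p^2 = O(n^{-2})$, contributing $O(n^{d-1})$ in total. For the $O(n^{2d})$ pairs with $d(\vect{v},\vect{w}) \geq 2$, reveal $C_{\vect{v}}$ first; conditional on $\abs{C_{\vect{v}}} \leq \beta \log n$, only $O(n \log n) = o(\abs{V})$ vertices have been examined. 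Re-running the first-moment argument for $\vect{w}$ on the residual graph (with each $a_i n$ perturbed by at most $O(\log n)$, which does not affect any asymptotic) yields $\mathbb{P}(\mathbb{1}_{\vect{w}} = 1 \mid \mathbb{1}_{\vect{v}} = 1) = pq(1+o(1))$, and hence $\text{Cov}(\mathbb{1}_{\vect{v}}, \mathbb{1}_{\vect{w}}) = o(p^2)$, contributing $o(n^{2d-2})$ in total.

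The main obstacle will be making precise the first-moment asymptotic $\mathbb{P}(\vect{S}_s = \vect{0}) \to q$. This breaks into (i) continuity of the multitype-BP extinction probability under the Binomial-to-Poisson limit of the offspring distributions, and (ii) showing that the upper BP, if destined for extinction, goes extinct within $s = \beta \log n$ steps with overwhelming probability, which follows from the exponential decay of the population conditional on extinction. Once these ingredients are in hand, the variance estimate is routine, since revealing a small cluster perturbs the underlying graph by only $O(n \log n)$ vertices and the arguments of Sections~\ref{section_supercritical1}--\ref{section_supercritical3} are robust to such perturbations.
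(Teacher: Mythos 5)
Your proposal follows essentially the same route as the paper's proof: a second-moment argument, with the first moment obtained by coupling $\vect{\A}_t$ to the upper-bounding random walk $\vect{S}_t$ and passing from Binomial to Poisson offspring, and the variance obtained by bounding covariances. The two spots you flag as "the main obstacle" are exactly what the paper works out: for the first moment it constructs a pointwise coupling of the Binomial and Poisson multitype processes with total-variation error $O(a_1\lambda/n)$ per birth event, so that $\lvert\prob{\tau\le\beta\log n}{}-\prob{T\le\beta\log n}{}\rvert=O(\log n/n)$ while $\prob{T\le\beta\log n}{}\to q$ by monotone convergence, rather than arguing abstractly via continuity of the fixed point $\vect{f}(\vect{x})=\vect{x}$; and for the variance, rather than "re-running on a residual graph," it conditions on the realized clusters $C_{\vect{v}}=J_{\vect{v}}$, $C_{\vect{w}}=J_{\vect{w}}$ and uses the concrete observation that two non-adjacent clusters of size $\le\beta\log n$ share at most $2\abs{J_{\vect{v}}}\abs{J_{\vect{w}}}\le 2\beta^2(\log n)^2$ common neighbors (plus a separate estimate for the case $\vect{w}\in C_{\vect{v}}$), yielding a per-pair covariance $O(n^{-3}(\log n)^2)$ instead of your qualitative $o(p^2)$. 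One small misstep: you cite Lemma~\ref{activeVerticesLemma} as showing $\norm{\vect{S}_s}_1$ is either $0$ or exceeds $K\log n$, but the lemma concerns $\vect{\A}_s$ and its second clause is an \emph{upper} bound $\norm{\vect{\A}_s}_1<K\log n$; the estimate you actually want is inequality~(\ref{activeSetLowerBd}), $\prob{\norm{\vect{S}_s}_1\le\frac{d+2}{\gamma_2}\log n}{}\le n^{-(d+1)}$, and note the paper in fact avoids the detour through $\{\vect{S}_s=\vect{0}\}$ by working with the stopping times $\tau$ and $T_{\vect{\A}}$ directly.
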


Our approach in the proof of Lemma~\ref{giantCompSizeLemma} will be to show that the probability of a vertex being included in a component of size at most $\beta \log n$ given that the vertex is occupied approaches $q$.  Then we will apply a second moment method argument to demonstrate that the actual proportion of occupied sites in small sized components approaches $q$.  This will require showing that the events $\{\abs{C_{\vect{v}}} \leq \beta \log n \}$ are asymptotically uncorrelated.

\begin{proof}[Proof of Lemma~\ref{giantCompSizeLemma}]
First we will show that $\prob{\abs{C_{\vect{v}}} \leq \beta \log n \ | \ \xi_{\vect{v}} = 1}{} \to q$.  Let $\vect{Z}_t$ be the multitype branching process with
\begin{equation*}
(Z_{t+1}^{j} \ | \ \vect{Z}_t = \vect{e_k}) \begin{cases}
\sim \text{Poisson}(\lambda a_j) & j\neq k \\
\equiv 0 & j = k
\end{cases}
\end{equation*}
for $k = 1, \ldots, d$ and $t\geq 1$, but with $Z_1^j \sim \text{Poisson}(\lambda a_j)$ for all $j = 1, \ldots, d$.  Then let $\displaystyle T := 1+ \sum_{t \geq 1} \norm{\vect{Z}_t}_1$ be the total size of this branching process, and let $\tau := \inf\{t \ | \ \norm{\vect{S}_t}_1 = 0\}$ be the total size of the upper bounding binomial branching process (recall that $\vect{S}_t$ is the random walk version of this branching process).  Also, let the Poisson branching process be coupled with the binomial branching process so as to minimize their total variation distance, $d_{TV}(Z_1^j, S_1^j) \leq \frac{a_j \lambda}{n}$~\cite{poisson}.  Thus,
\begin{align}
\label{tvd1}
\prob{T \leq \beta \log n, T \neq \tau, \tau \leq \beta \log n}{} \leq \frac{a_1 \lambda \beta \log n}{n}, \\
\label{tvd2}
\prob{T \leq \beta \log n, \tau > \beta \log n}{} \leq  \frac{a_1 \lambda \beta \log n}{n},
\end{align}
since in both events the two branching processes must differ in at least one of the first $\beta \log n$ birth events, and the probability of any birth event differing in the two branching processes is bounded by the total variation distance of their distributions (here we are referring to a randomly distributed number of children of a single type as a `birth event'). From the above discussion, we know that $\prob{T < \infty}{} = q$.  So we have:
\begin{align*}
\prob{\abs{C_{\vect{v}}} \leq \beta \log n \ | \ \xi_{\vect{v}} = 1}{}\ &\geq \ \prob{\tau \leq \beta \log n}{} \\
&\geq \ \prob{T \leq \beta \log n}{} - 2 \frac{a_1 \lambda \beta \log n}{n} \\
&\to \ q,
\end{align*}
which implies that $\liminf \prob{\abs{C_{\vect{v}}} \leq \beta \log n \ | \ \xi_{\vect{v}} = 1}{} \geq q$.  For the upper bound, we want to know that up to time $s = \beta \log n$, the processes $\vect{\A}_t$ and $\vect{S}_t$ are identical with high probability.  Let $T_{\vect{\A}} := \inf \{t \ | \ \vect{\A}_t = \vect{0} \}$.  Recall that the number of extra births in $\vect{S}_t$ that are lost in $\vect{\A}_t$ up to time $s \wedge T_{\vect{\A}}$ can be stochastically bounded above by $Y^{(s)} \sim \text{Binomial}(d\beta K (\log n)^2, \frac{\lambda}{n})$, as per (\ref{extraBirths}). Since, for $n > 2 \lambda d \beta K (\log n)^2$, 
\begin{align}
\prob{Y^{(s)} \geq 1}{} &= \sum_{k=1}^{d \beta K (\log n)^2} {d \beta K (\log n)^2 \choose k} \left(\frac{\lambda}{n} \right)^k \left(1 - \frac{\lambda}{n}\right)^{d \beta K (\log n)^2 -k} \nonumber \\
& \leq \sum_{k=1}^{d \beta K (\log n)^2} \left(\frac{\lambda d \beta K (\log n)^2}{n} \right)^k \nonumber \\
\label{noExtraBirths}
& \leq \frac{2 \lambda d \beta K (\log n)^2}{n},
\end{align}
the probability that the two processes will differ by time $s$ is small.  Now for the upper bound:
\begin{align*}
\prob{\abs{C_{\vect{v}}} \leq \beta \log n \ | \ \xi_{\vect{v}} = 1}{} &= \prob{ \vect{\A}_s = \vect{0}\ | \ \xi_{\vect{v}} = 1}{} \\
&\leq \prob{\vect{\A}_s = \vect{0}, \vect{\A}_t=\vect{S}_t \ \forall t \leq s \ | \ \xi_{\vect{v}} = 1}{}\\
& \hspace{1cm} + \prob{\vect{\A}_t \neq \vect{S}_t \text{ for some } t \leq s  \ | \ \xi_{\vect{v}} = 1}{} \\
& \leq \prob{\vect{S}_s = \vect{0}, \vect{\A}_t=\vect{S}_t \ \forall t \leq s  \ | \ \xi_{\vect{v}} = 1}{} + \prob{Y^{(s)} \geq 1}{} \\
&\leq \prob{\tau \leq \beta \log n}{} + \frac{2 \lambda d \beta K (\log n)^2}{n} \\
& \leq \prob{T \leq \beta \log n}{} + 2 \frac{a_1 \lambda \beta \log n}{n} + \frac{2 \lambda d \beta K (\log n)^2}{n} \\
& \to q.
\end{align*}
The fourth line above uses the estimate in (\ref{noExtraBirths}) and that $\vect{S}_s$ is independent of the vertex $\vect{v}$, and the fifth line uses the total variation distance bounds that we found earlier for the two branching processes.  Thus we have shown that $\limsup \prob{\abs{C_{\vect{v}}} \leq \beta \log n \ | \ \xi_{\vect{v}} = 1}{} = q$, and so
\begin{equation}
\label{probConv}
\lim_{n \to \infty} \prob{\abs{C_{\vect{v}}} \leq \beta \log n \ | \ \xi_{\vect{v}} = 1}{} = q.
\end{equation}

Now in order to complete the proof, we will apply the second moment method to the following random variables:
\begin{equation*}
H_{\vect{v}} = \begin{cases}
1 & \text{if }\abs{C_{\vect{v}}} \leq \beta \log n,\ \xi_{\vect{v}}=1 \\
0 & \text{else}.
\end{cases}
\end{equation*}
Note that 
\begin{align*}
\sum_{\vect{v} \in V} H_{\vect{v}} &= \# \left\{\vect{v}\in V \ : \ \abs{C_{\vect{v}}} \leq \beta \log n,\ \xi_{\vect{v}} = 1 \right\} \\
\sum_{\vect{v} \in V} \E H_{\vect{v}} &= \lambda \left(\prod_i a_i \right) n^{(d-1)}\ \prob{\abs{C_{\vect{v}}} \leq \beta \log n \ | \ \xi_{\vect{v}} = 1}{},
\end{align*}
so we wish to prove that
\begin{equation*}
\lim_{n\to \infty} \frac{\sum_{\vect{v} \in V} H_{\vect{v}}}{\sum_{\vect{v} \in V} \E H_{\vect{v}}} = 1
\end{equation*}
in probability.  To apply the second moment method we need to bound the variance of the sum of the $H_{\vect{v}}$'s, and to do this we need an upper bound on the covariance, $\E H_{\vect{v}} H_{\vect{w}} - (\E H_{\vect{v}} )^2$.  We begin by observing that
\begin{equation}
\label{covH}
\E H_{\vect{v}} H_{\vect{w}} = p^2 \ \prob{\abs{C_{\vect{v}}} \leq \beta \log n,\ \abs{C_{\vect{w}}} \leq \beta \log n \ | \  \xi_{\vect{v}}=1,\  \xi_{\vect{w}}=1}{}.
\end{equation}
If $\vect{w} \in \mathcal{N}(\vect{v})$ then
\begin{align*}
\prob{\abs{C_{\vect{v}}} \leq \beta \log n,\ \abs{C_{\vect{w}}} \leq \beta \log n \ | \  \xi_{\vect{v}}=1,\  \xi_{\vect{w}}=1}{} &= \prob{\abs{C_{\vect{v}}} \leq \beta \log n \ | \  \xi_{\vect{v}}=1,\  \xi_{\vect{w}}=1}{} \\
&\leq \prob{\abs{C_{\vect{v}}} \leq \beta \log n \ | \  \xi_{\vect{v}}=1}{}.
\end{align*}
Let us assume now that $\vect{w} \notin \mathcal{N}(\vect{v})$, and we will denote by $J_{\vect{v}} \subset V$ such that $\vect{v} \in J_{\vect{v}}$ an arbitrary set of vertices containing the vertex $\vect{v}$.  Then
\begin{align}
\prob{\abs{C_{\vect{v}}} \leq \beta \log n,\ \abs{C_{\vect{w}}} \leq \beta \log n \ | \  \xi_{\vect{v}}=1,\  \xi_{\vect{w}}=1}{}& \nonumber \\
& \nonumber \\
 = \sum_{\abs{J_{\vect{v}}} \leq \beta \log n} \ \sum_{\abs{J_{\vect{w}}} \leq \beta \log n} &\prob{C_{\vect{v}}= J_{\vect{v}},\ C_{\vect{w}} = J_{\vect{w}} \ | \ \xi_{\vect{v}}=1,\  \xi_{\vect{w}}=1}{} \nonumber \\
& \nonumber \\
\label{clustNoIntersect}
=  \sum_{\abs{J_{\vect{v}}} \leq \beta \log n} \ \sum_{\substack{ \abs{J_{\vect{w}}} \leq \beta \log n \\ J_{\vect{w}} \cap \mathcal{N}(J_{\vect{v}}) = \emptyset}} & \prob{C_{\vect{v}}= J_{\vect{v}},\ C_{\vect{w}} = J_{\vect{w}} \ | \ \xi_{\vect{v}}=1,\  \xi_{\vect{w}}=1}{} \\
\label{clustIntersect}
+ \sum_{\substack{\abs{J_{\vect{v}}} \leq \beta \log n \\ J_{\vect{v}} \ni \vect{w}}} &\prob{C_{\vect{v}}= J_{\vect{v}} = C_{\vect{w}} | \ \xi_{\vect{v}}=1,\  \xi_{\vect{w}}=1}{},
\end{align}
since if $J_{\vect{w}} \cap \mathcal{N}(J_{\vect{v}}) \neq \emptyset$ then the event that $C_{\vect{w}} = J_{\vect{w}}$ has zero probability unless $J_{\vect{w}} = J_{\vect{v}}$.  We will bound lines (\ref{clustNoIntersect}) and (\ref{clustIntersect}) separately.  Beginning with line (\ref{clustNoIntersect}), we condition on the event that $C_{\vect{v}} = J_{\vect{v}}$ to obtain:
\begin{align*}
\sum_{\abs{J_{\vect{v}}} \leq \beta \log n} \ \sum_{\substack{ \abs{J_{\vect{w}}} \leq \beta \log n \\ J_{\vect{w}} \cap \mathcal{N}(J_{\vect{v}}) = \emptyset}} \prob{C_{\vect{v}}= J_{\vect{v}} \ | \ \xi_{\vect{v}}=1,\  \xi_{\vect{w}}=1}{}  \prob{C_{\vect{w}}= J_{\vect{w}} \ | \ \xi_{\vect{v}}=1,\  \xi_{\vect{w}}=1, C_{\vect{v}} = J_{\vect{v}} }{}\\
= \sum_{\abs{J_{\vect{v}}} \leq \beta \log n} \prob{C_{\vect{v}}= J_{\vect{v}} \ | \ \xi_{\vect{v}}=1}{} \sum_{\substack{ \abs{J_{\vect{w}}} \leq \beta \log n \\ J_{\vect{w}} \cap \mathcal{N}(J_{\vect{v}}) = \emptyset}} \prob{C_{\vect{w}}= J_{\vect{w}} \ | \  \xi_{\vect{w}}=1, C_{\vect{v}} = J_{\vect{v}} }{}.
\end{align*}
The second line is because when $J_{\vect{w}} \cap \mathcal{N}(J_{\vect{v}}) = \emptyset$, the event that $C_{\vect{v}} = J_{\vect{v}}$ is independent of $\xi_{\vect{w}}$, and the event that $\xi_{\vect{v}} = 1$ is contained in the event that $C_{\vect{v}} = J_{\vect{v}}$.  Now, if $C_{\vect{v}} = J_{\vect{v}}$ and $J_{\vect{w}} \cap \mathcal{N}(J_{\vect{v}}) = \emptyset$ then we know that the vertices in $\mathcal{N}(J_{\vect{w}}) \cap \mathcal{N}(J_{\vect{v}})$ must not be occupied, otherwise $C_{\vect{v}}$ would have to include $J_{\vect{w}}$.  Additionally, these are the only vertices that both of the events $C_{\vect{v}} = J_{\vect{v}}$ and $C_{\vect{w}} = J_{\vect{w}}$ depend upon, otherwise these two events use independent vertices.  Noting that $\abs{\mathcal{N}(J_{\vect{w}}) \cap \mathcal{N}(J_{\vect{v}})} \leq 2 \abs{J_{\vect{v}}} \abs{J_{\vect{w}}}$, since each pair of non-neighboring vertices has at most two common neighbors, we have that (\ref{clustNoIntersect}) is bounded by:
\begin{align}
&\leq \sum_{\abs{J_{\vect{v}}} \leq \beta \log n} \prob{C_{\vect{v}}= J_{\vect{v}} \ | \ \xi_{\vect{v}}=1}{} \sum_{\substack{ \abs{J_{\vect{w}}} \leq \beta \log n \\ J_{\vect{w}} \cap \mathcal{N}(J_{\vect{v}}) = \emptyset}} \frac{ \prob{C_{\vect{w}}= J_{\vect{w}} \ | \  \xi_{\vect{w}}= 1}{} }{ \left(1 - \frac{\lambda}{n} \right)^{2 \beta^2 (\log n)^2 }} \nonumber \\
&\leq \left(1 + \frac{5 \lambda \beta^2 (\log n)^2}{n} \right) \sum_{\abs{J_{\vect{v}}} \leq \beta \log n} \prob{C_{\vect{v}}= J_{\vect{v}} \ | \ \xi_{\vect{v}}=1}{} \sum_{\abs{J_{\vect{w}}} \leq \beta \log n}  \prob{C_{\vect{w}}= J_{\vect{w}} \ | \  \xi_{\vect{w}}= 1}{} \nonumber \\
\label{noInstersectBd}
& =  \left(1 + \frac{5 \lambda \beta^2 (\log n)^2}{n} \right) \frac{(\E H_{\vect{v}})^2}{p^2}.
\end{align}
In the first line above we used that the event that none of the vertices in $\mathcal{N}(J_{\vect{w}}) \cap \mathcal{N}(J_{\vect{v}})$ are occupied is contained in the event that $C_{\vect{w}} = J_{\vect{w}}$.  In the second line we applied the following computation, in which the inequalities hold provided $n \geq 2 \lambda$ and $n \geq 32 \lambda \beta^2 (\log n)^2$:
\begin{align*}
\left( \frac{1}{1 - \frac{\lambda}{n}} \right)^{2 \beta^2 (\log n)^2} &\leq \left(1 + \frac{2\lambda}{n} \right)^{2 \beta^2 (\log n)^2} \\
&\leq 1 + \frac{4 \lambda \beta^2 (\log n)^2}{n} + \frac{32 \lambda^2 \beta^4 (\log n)^4}{n^2} \\
&\leq 1 + \frac{5 \lambda \beta^2 (\log n)^2}{n}.
\end{align*}

To bound line (\ref{clustIntersect}), we observe that
\begin{align}
\sum_{\substack{\abs{J_{\vect{v}}} \leq \beta \log n \\ J_{\vect{v}} \ni \vect{w}}} &\prob{C_{\vect{v}}= J_{\vect{v}} = C_{\vect{w}} | \ \xi_{\vect{v}}=1,\  \xi_{\vect{w}}=1}{} = \prob{\abs{C_{\vect{v}}} \leq \beta \log n,\ C_{\vect{v}} = C_{\vect{w}} \ | \ \xi_{\vect{v}} = 1,\ \xi_{\vect{w}} = 1 }{} \nonumber \\
&\leq \prob{A_{t} \cap \mathcal{N}(\vect{w}) \neq \emptyset \text{ for some } t\leq \beta \log n \ | \ A_{0} = \{ \vect{v} \} }{} \nonumber \\
&\leq 1 - \left(1 - \frac{\lambda}{n} \right)^{2\beta \log n} \nonumber \\
&\leq \frac{2 \lambda \beta \log n}{n} + \frac{8 \lambda^2 \beta^2 (\log n)^{2}}{n^2} \nonumber \\
\label{intersectBd}
& \leq \frac{3 \lambda \beta \log n}{n}
\end{align}
In words, the second line above says that the probability that $C_{\vect{v}}$ has fewer than $\beta \log n$ vertices and contains the vertex $\vect{w}$ (conditional on $\vect{v}$ and $\vect{w}$ begin occupied) is at most the probability that we find an occupied neighbor of $\vect{w}$ by time $\beta \log n$ during the process of discovering the cluster $C_{\vect{v}}$.  In the third line, we used the fact that at each time, $t\leq \beta \log n$, until $A_t \cap \mathcal{N}(\vect{w}) \neq \emptyset$ we have that $\abs{\mathcal{N}(\vect{v}_t ) \cap \mathcal{N}(\vect{w})} \leq 2$, and at least one such vertex must be occupied in order for $\vect{w}$ to be discovered.  The last two lines require that $n \geq 8 \lambda \beta \log n$.

Combining (\ref{covH}) -- (\ref{intersectBd}) shows that the covariance between $H_{\vect{v}}$ and $H_{\vect{w}}$ when $\vect{w} \notin \mathcal{N}(\vect{v})$ can be bounded as:
\begin{align}
\E H_{\vect{v}} H_{\vect{w}} - (\E H_{\vect{v}})^2 &\leq \frac{5 \lambda \beta^2 (\log n)^2}{n}(\E H_{\vect{v}})^2 + \frac{3 \lambda^3 \beta \log n}{n^3} \nonumber \\
\label{covBd}
& \leq \frac{6 \lambda^3 \beta^2 (\log n)^2}{n^3}
\end{align}
for sufficiently large $n$, since $\E H_{\vect{v}} \leq p$. Now we can use this to bound the variance:
\begin{align}
\Var \left(\sum_{\vect{v} \in V} H_{\vect{v}}\right) &= \sum_{\substack{\vect{v}, \vect{w} \in V \\ \vect{w} \in \mathcal{N}(\vect{v})}} \E H_{\vect{v}} H_{\vect{w}} + \sum_{\substack{\vect{v}, \vect{w} \in V \\ \vect{w} \notin \mathcal{N}(\vect{v})}} \E H_{\vect{v}} H_{\vect{w}} - \sum_{\vect{v}, \vect{w} \in V } \E H_{\vect{v}} \E H_{\vect{w}} \nonumber \\
&\leq \sum_{\substack{\vect{v}, \vect{w} \in V \\ \vect{w} \in \mathcal{N}(\vect{v})}} \E H_{\vect{v}} H_{\vect{w}} + \sum_{\substack{\vect{v}, \vect{w} \in V \\ \vect{w} \notin \mathcal{N}(\vect{v})}} \left[ \E H_{\vect{v}} H_{\vect{w}} -  \E H_{\vect{v}} \E H_{\vect{w}} \right] \nonumber \\
&\leq \lambda^2 \left(\prod_{i} a_i\right) a_1\, d\, n^{(d-1)} + 6\lambda^3 \beta^2 \left(\prod_{i} a_i\right)^2 n^{(2d-3)} (\log n)^2 \nonumber \\
\label{varBd}
&= O\left(n^{(2d-3)} (\log n)^2 \right).
\end{align}
Using this bound on the variance and Chebyshev's inequality yields:
\begin{align}
\prob{\abs{\frac{\sum_{\vect{v} \in V} H_{\vect{v}}}{\sum_{\vect{v} \in V} \E H_{\vect{v}}} - 1} > n^{-1/3}}{}&  \nonumber \\
& \hspace{-3cm} = \prob{\abs{\sum_{\vect{v} \in V} H_{\vect{v}} - \sum_{\vect{v} \in V} \E H_{\vect{v}} } > \lambda \left(\prod_i a_i \right) n^{(d-1)}\ \prob{\abs{C_{\vect{v}}} \leq \beta \log n \ | \ \xi_{\vect{v}} = 1}{} n^{-1/3} }{}  \nonumber \\
&\hspace{-3cm} \leq \frac{\Var \left(\sum_{\vect{v} \in V} H_{\vect{v}}\right)} {\lambda^2 \left(\prod_i a_i \right)^2 (q + o(1))^2 n^{(2d-2- 2/3)} } \nonumber \\
\label{proportionConv}
& \hspace{-3cm} = O\left(n^{-1/3} (\log n)^2 \right).
\end{align}
Combining (\ref{probConv}) and (\ref{proportionConv}) completes the proof of Lemma \ref{giantCompSizeLemma}.
\end{proof}
Thus the proof of Theorem~\ref{thmb} is complete.
\end{proof}


\section{Connectivity Threshold}
\label{section_connectivity}

We have demonstrated that when $p = \lambda/n$, a giant component emerges at $\lambda_c$.  The purpose of this section is to determine when the random site subgraph of the Hamming torus is connected.  We prove the following theorems.

\begin{theorem}[{\bf \ref{thm-discon} restated}]
Let $c < \frac{d-1}{\sum a_i}$.  If $p = p(n) \leq \frac{c \log n}{n}$ and $p = \omega(n^{-d})$ then the random site subgraph of the Hamming torus contains isolated vertices, and is thus not connected (a.a.s.).
\end{theorem}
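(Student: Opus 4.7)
The natural plan is to apply the second moment method to the number of isolated vertices. Define $I_{\vect{v}} := \xi_{\vect{v}} \prod_{\vect{u} \in \mathcal{N}(\vect{v})}(1 - \xi_{\vect{u}})$ and $X := \sum_{\vect{v} \in V} I_{\vect{v}}$. Every vertex has the same degree $N := \sum_i (a_i n - 1)$, so $\E I_{\vect{v}} = p(1-p)^N$ and $\E X = \left(\prod_i a_i\right) n^d \cdot p(1-p)^N$. It suffices to show $\E X \to \infty$ and $\Var(X) = o((\E X)^2)$: Chebyshev's inequality then gives $\mathbb{P}(X=0) \to 0$, and since the hypothesis $p = \omega(n^{-d})$ forces $\E|\{\text{occupied}\}| = p|V| \to \infty$, a.a.s.\ there are at least two occupied vertices, so the existence of an isolated vertex implies disconnectedness.

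For the first moment I would split into two regimes. When $p = c'\log n / n$ with $0 < c' \leq c$, a standard expansion gives $(1-p)^N = n^{-c'\sum a_i}(1+o(1))$, so $\E X \asymp \log n \cdot n^{d-1-c'\sum a_i} \to \infty$; this is precisely where the hypothesis $c < (d-1)/\sum a_i$ enters. When $p = o(\log n / n)$ but $p = \omega(n^{-d})$, the factor $(1-p)^N$ is at worst $n^{-o(1)}$ (and bounded below by a positive constant when $p = O(1/n)$), so $\E X \gg p n^d \cdot n^{-o(1)} \to \infty$. A uniform bookkeeping based on $\log(1-p) = -p + O(p^2)$ handles both cases simultaneously.

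The variance hinges on the observation that $I_{\vect{v}}$ depends only on the occupancies in the closed neighborhood $\{\vect{v}\} \cup \mathcal{N}(\vect{v})$, and in the Hamming torus two such closed neighborhoods are disjoint precisely when $d(\vect{v}, \vect{w}) \geq 3$. Hence only pairs at Hamming distance $1$ or $2$ contribute to the covariance. For $d(\vect{v}, \vect{w}) = 1$ we have $I_{\vect{v}} I_{\vect{w}} \equiv 0$, so $\mathrm{Cov}(I_{\vect{v}}, I_{\vect{w}}) \leq 0$. For $d(\vect{v}, \vect{w}) = 2$, a short verification shows that $\vect{v}$ and $\vect{w}$ share exactly two common neighbors (the two axis-aligned ``corners'' of the 2-rectangle they span), giving
\begin{equation*}
\mathrm{Cov}(I_{\vect{v}}, I_{\vect{w}}) \ = \ p^2 (1-p)^{2N - 2} - p^2 (1-p)^{2N} \ = \ O\!\left(p^3 (1-p)^{2N}\right).
\end{equation*}
Since there are $O(n^{d+2})$ ordered distance-$2$ pairs,
\begin{equation*}
\frac{\Var(X)}{(\E X)^2} \ \leq \ \frac{1}{\E X} \, + \, O\!\left(\frac{n^{d+2}\, p^3}{n^{2d}\, p^2}\right) \ = \ \frac{1}{\E X} + O\!\left(p\, n^{2-d}\right) \ = \ o(1),
\end{equation*}
since $d \geq 2$ and $p \leq c\log n / n$.

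The main obstacle I expect is handling the first moment uniformly across the very wide range $\omega(n^{-d}) \leq p \leq c\log n/n$, since the approximation $(1-p)^N \approx e^{-pN}$ transitions between qualitatively different regimes (exponentially small in $\log n$ versus essentially constant) around $p \asymp 1/n$. The variance computation itself is routine once the exactly-two common-neighbor count for distance-$2$ pairs in the Hamming torus has been verified.
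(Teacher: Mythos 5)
Your proposal is correct and follows essentially the same approach as the paper: compute $\E X = p|V|(1-p)^N$, check the range $\omega(n^{-d}) \le p \le c\log n/n$ forces $\E X \to \infty$ (with the crucial split near $p \asymp 1/n$, where $(1-p)^N$ transitions from a constant to a polynomial), then apply the second moment method, exploiting that distance-$1$ pairs cannot both be isolated, distance-$2$ pairs share exactly two common neighbors, and distance-$\ge 3$ pairs are independent. Your Chebyshev bookkeeping ($\Var(X)/(\E X)^2 \le 1/\E X + O(pn^{2-d}) \to 0$) is if anything slightly cleaner than the paper's, which asserts $\E I_n \gg n^\epsilon$ and uses threshold $(\E I_n)^{1/2}\log^2 n$; one small caution is that your summary line ``$p\,n^d\cdot n^{-o(1)}\to\infty$'' is not a valid implication on its own, and the argument really rests on the parenthetical observation that for $p = O(1/n)$ the factor $(1-p)^N$ is bounded below by a positive constant, while for $p = \omega(1/n)$ you have the much stronger $p\,n^d \ge n^{d-1}$.
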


\begin{theorem}[\bf \ref{thm-con} restated]
Let $c > \frac{d-1}{\sum a_i}$.  If $p = p(n) \geq \frac{c \log n}{n}$ then the random site subgraph of the Hamming torus is connected (a.a.s.).
\end{theorem}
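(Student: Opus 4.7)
The plan is to combine two facts to obtain connectedness: a.a.s.\ the random site subgraph contains no isolated vertices, and (by Theorem~\ref{thm-isocon}) a.a.s.\ every vertex is either isolated or in the giant component. Together these immediately imply that every occupied vertex lies in one and the same component, so the site subgraph is connected a.a.s.

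For the absence of isolated vertices I would use a straightforward first-moment calculation. Each vertex of the Hamming torus has degree $D_n := (\sum_i a_i)n - d$, and if $I$ counts isolated occupied vertices then
\begin{equation*}
\E I \;=\; |V|\, p\, (1-p)^{D_n} \;\leq\; \Bigl(\prod_i a_i\Bigr)\, n^d\, p\, \exp(-p D_n).
\end{equation*}
The worst case for the hypothesis is $p = \Theta(\log n / n)$, where $\exp(-p D_n) \leq n^{-c\sum_i a_i \, (1-o(1))}$ and the bound becomes $O(n^{d-1-c\sum a_i}\log n) = o(1)$, which vanishes precisely because $c\sum_i a_i > d-1$; larger $p$ only strengthens the bound. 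Markov's inequality then gives $I = 0$ a.a.s.

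To invoke Theorem~\ref{thm-isocon} I must check its hypothesis. Since $a_1 + 2\sum_{i\geq 2} a_i \geq a_1 + \sum_{i\geq 2} a_i = \sum_i a_i$, the threshold $\tfrac{d-1}{a_1 + 2\sum_{i\geq 2}a_i}$ in Theorem~\ref{thm-isocon} is at most $\tfrac{d-1}{\sum a_i}$, so our $c$ satisfies both thresholds. Theorem~\ref{thm-isocon} then gives that a.a.s.\ every vertex is isolated or in the giant component, which combined with the step above yields connectivity.

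The one technical wrinkle is that Theorem~\ref{thm-isocon} is stated at $p = c\log n / n$ exactly while Theorem~\ref{thm-con} allows any $p \geq c\log n / n$. I expect the proof of Theorem~\ref{thm-isocon} (which, per the excerpt, reuses the cluster-discovery algorithm with slightly larger $m$ and $s$ in Lemma~\ref{planeBoundLemma} and a sprinkling argument in the spirit of Lemma~\ref{clusterMergeLemma}) to go through unchanged for any larger $p$, since larger $p$ only strengthens the supercritical estimates for each cluster. Alternatively, one can couple: write $p = p_1 + p_2 - p_1 p_2$ with $p_1 = c'\log n / n$ for some $c' \in (\tfrac{d-1}{\sum a_i}, c)$, apply Theorem~\ref{thm-isocon} to $G_{p_1}$, and use a further first-moment bound to rule out small non-singleton components created by sprinkling $G_{p_2}$ on top. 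This coupling bookkeeping is the main (but essentially routine) obstacle; the substantive work is already packaged into Theorem~\ref{thm-isocon}.
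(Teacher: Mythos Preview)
Your approach is correct but genuinely different from the paper's. The paper does \emph{not} reduce Theorem~\ref{thm-con} to Theorem~\ref{thm-isocon}; instead it proves Theorem~\ref{thm-con} directly: after invoking monotonicity to reduce to $p = c\log n/n$, it fixes a constant $\ell$ and shows (i) a.a.s.\ no occupied vertex has degree in $\{0,1,\ldots,\ell-1\}$ (a first-moment bound requiring precisely $c\sum a_i > d-1$), and (ii) any vertex with $d_{\vect v}\ge\ell$ has $|A_2|\ge m$ and hence, via Lemmas~\ref{planeBoundLemma} and~\ref{clusterMergeLemma} and the argument of Section~\ref{section_supercritical1}, lies in the giant. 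Your route instead packages step~(ii) into Theorem~\ref{thm-isocon} and replaces step~(i) by the weaker ``no isolated vertices,'' which is all that is needed once Theorem~\ref{thm-isocon} is in hand. This is arguably cleaner, with the caveat that in the paper's presentation Theorem~\ref{thm-isocon} is proved \emph{after} Theorem~\ref{thm-con} and explicitly borrows its inequalities~(\ref{twoStepBd1})--(\ref{twoStepUpperBd2}); so if you want to run your argument as written you must either reorder the lemmas or note that Theorem~\ref{thm-isocon}'s proof is in fact self-contained and does not invoke Theorem~\ref{thm-con} as a black box.

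Your ``technical wrinkle'' is overcomplicated. Connectedness is an increasing event in $p$, so it suffices to prove Theorem~\ref{thm-con} at $p = c\log n/n$ exactly (the paper does precisely this in its first sentence). At that value Theorem~\ref{thm-isocon} applies verbatim and your first-moment bound kills isolated vertices, so the coupling/sprinkling alternative you sketch is unnecessary.
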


\begin{theorem}[\bf \ref{thm-isocon} restated]
Fix $a_1 \geq a_2 \geq \cdots \geq a_d$, and let $c > \frac{d-1}{2\sum_{i=2}^d a_i + a_1 }$.  If $p = \frac{c \log n}{n}$ then every vertex in the random site subgraph of the Hamming torus is either isolated or belongs to the giant component (a.a.s.).
\end{theorem}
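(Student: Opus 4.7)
The plan is to combine a first-moment estimate ruling out small components with the supercritical cluster-growth and merging machinery from the proof of Theorem~\ref{thmb}, adapted to the denser regime $p = c\log n / n$. The overall goal is to establish, a.a.s., a dichotomy: every vertex's cluster is either a singleton or has size at least $r = n^{d-4/3}$, while simultaneously no component of size $k \in [2, \beta \log n]$ exists. Combined with Lemma~\ref{clusterMergeLemma}, these two facts force every non-isolated vertex into a single giant component.

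First I would handle $k = 2$ directly. An edge in direction $j$ has external neighborhood of size $(a_j + 2\sum_{i\neq j} a_i)n - O(1)$, so the probability that a specific edge is a component is at most $p^2 n^{-c(a_j + 2\sum_{i\neq j} a_i)}$. Summing over the $\Theta(n^{d+1})$ edges in each direction, the expected number of size-$2$ components along direction $j$ is $O\bigl((\log n)^2\, n^{d - 1 - c(a_j + 2\sum_{i\neq j} a_i)}\bigr)$, and the largest (worst) exponent occurs at $j = 1$, giving precisely $d - 1 - c(a_1 + 2\sum_{i\geq 2} a_i) < 0$ by hypothesis. For $k \geq 3$ I would use the same first-moment scheme: the number of connected $k$-subgraphs through a root vertex is at most $(eD)^{k-1}$ with $D = n\sum a_i$, and one bounds $|\mathcal{N}(S) \setminus S| \geq f(k)\, n$ below via a case analysis (collinear configurations in various directions, two-dimensional boxes, $L$-shapes, and so on), obtaining $f(k) = \Omega(k)$. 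The resulting expected counts have $n$-exponents strictly more negative than the $k = 2$ exponent, and summing the bound over $k \in [2, \beta \log n]$ yields total contribution $o(1)$.

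Second, I would run the cluster-discovering algorithm of Section~\ref{section_supercritical} with $p = c\log n/n$ in place of $\lambda/n$. Since $p \gg \lambda_c/n$, the upper-bounding binomial branching process is strongly supercritical (per-type mean offspring $c a_j \log n \to \infty$), so the analog of Lemma~\ref{activeVerticesLemma} with $m$, $s$, $K$ all $\Theta(\log n)$ gives that, with probability $1 - O(n^{-(d+1)})$, either the algorithm terminates with $|C_{\vect{v}}| \leq \beta \log n$ or $\norm{\vect{\A}_s}_1 \geq m$. Lemma~\ref{planeBoundLemma} is stated with exactly the flexibility in $m$ and $s$ needed here, so the lower-bounding multitype process of Section~\ref{section_supercritical1} survives to time $r$ with probability $1 - O(n^{-(d+1)})$, and Lemma~\ref{clusterMergeLemma} applies verbatim. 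A union bound over all $|V| = O(n^d)$ vertices then yields the dichotomy: a.a.s.\ every cluster has size at most $\beta \log n$ or at least $r$, and all clusters of the latter size coincide in one giant component. Combined with the first step, every vertex is isolated or in the giant component.

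The main obstacle is the isoperimetric content of the first step: rigorously lower-bounding $|\mathcal{N}(S) \setminus S|$ for connected $S$ with $2 \leq |S| \leq \beta \log n$, uniformly over shapes. The $k = 2$ case is immediate and matches the threshold exactly, but for $k \geq 3$ one must verify that no ``compact'' shape (a small box in two or more directions, in particular) has an anomalously small external neighborhood that would overwhelm the first-moment estimate. A short case analysis, or an appeal to a Loomis--Whitney-type projection inequality for the Hamming torus, dispatches these configurations: in every case the exponent of $n$ in the expected count remains strictly more negative than at $k = 2$. The book-keeping across shapes is the chief technical burden of this part of the argument; everything else assembles from machinery already in the paper.
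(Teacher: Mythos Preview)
Your plan differs from the paper's and contains one concrete error. You claim that for connected $S$ with $|S|=k$ one has $|\mathcal N(S)\setminus S|/n = f(k) = \Omega(k)$; this is false. A $k^{1/d}\times\cdots\times k^{1/d}$ box in the Hamming torus has $|\mathcal N(S)\setminus S|\asymp k^{(d-1)/d}n$, and Loomis--Whitney, which you invoke, gives exactly this exponent and no better. With only $f(k)\gtrsim k^{(d-1)/d}$, the crude count $(eD)^{k-1}$ yields an expected number of size-$k$ components of order $(C\log n)^k\, n^{\,d-1-c\,C k^{(d-1)/d}}$; to make this summable over $2\le k\le\beta\log n$ you must combine the Loomis--Whitney lower bound (effective for large $k$) with the trivial edge-containment bound $f(k)\ge f(2)=a_1+2\sum_{i\ge2}a_i$ (effective for bounded $k$), and check that the two regimes patch. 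This can be made to work, but it is not the ``short case analysis'' you describe, and your argument as written does not go through.

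The paper avoids the isoperimetric question entirely. Rather than counting small components, it fixes $\vect v$ with $d_{\vect v}\ge 1$ and bounds the BFS layers $A_1,A_2,A_3$ (vertices at graph distance $1,2,3$). The key estimate is $\prob{|A_2|<\ell,\ d_{\vect v}=l}{}=o(n^{-d})$ for each $1\le l<\ell$: the $l$ vertices of $A_1$ are all neighbors of the single vertex $\vect v$, so each has at least $(\sum_{i\ge2}a_i)n-O(\ell)$ fresh neighbors outside $\mathcal N(\vect v)\cup A_1$ (the worst case being a neighbor along $\vect e_1$). The resulting $n$-exponent is $-1-c\bigl((l+1)\sum_{i\ge2}a_i+a_1\bigr)$, which at $l=1$ matches your $k=2$ computation exactly and only improves for $l\ge2$. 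One more layer ($|A_2|\ge\ell\Rightarrow|A_3|\ge m$, by the same reasoning as inequality~(\ref{twoStepBd1})) then feeds directly into Lemmas~\ref{planeBoundLemma} and~\ref{clusterMergeLemma} as in the proof of Theorem~\ref{thm-con}. The only ``isoperimetric'' fact used is that $l$ neighbors of a single vertex have nearly disjoint second neighborhoods --- an immediate observation, in contrast to the genuine shape analysis your route requires.
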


These three theorems together imply that, with probability approaching one, the random site subgraph of the Hamming torus is connected if and only if it contains no isolated vertices (except in the trivial case $p \asymp n^{-d}$, where the subgraph may consist of just a single vertex with positive probability).

\begin{proof}[Proof of Theorem~\ref{thm-discon}]  We denote by $d_{\vect{v}}$ the degree of the vertex $\vect{v}$ in the random site subgraph of the Hamming torus, with the convention that $d_{\vect{v}} = -1$ if $\xi_{\vect{v}} = 0$.  Let $I_n$ be the number of isolated vertices in the random site subgraph.  Then
\begin{align}
\nonumber \E I_n &= p \sum_{\vect{v} \in V} \prob{d_{\vect{v}} = 0 \ \middle | \ \xi_{\vect{v}} = 1}{} \\
\nonumber &= p \left(\prod a_i \right) n^d (1-p)^{\sum a_i n - d}\\
\label{numIsolated}
& \asymp p\, n^d \exp \left[ - p \sum a_i n \right],
\end{align}
where ``$a_n \asymp b_n$'' means that $0< \lim \frac{a_n}{b_n} < \infty$.  The last line above follows from the limit
\begin{equation*}
\lim_{n \to \infty} (1-p)^n e^{p n} = 1
\end{equation*}
provided $p = o(n^{-1/2})$.  If $n^{-1} \leq p \leq \frac{c \log n}{n}$, then equation (\ref{numIsolated}) is bounded by
\begin{equation*}
\E I_n \gae n^{d-1 - c \sum a_i} \to \infty.
\end{equation*}
Otherwise, if $n^{-d+1} \ll p \leq n^{-1}$, then equation (\ref{numIsolated}) yields
\begin{equation*}
\E I_n \gae p\, n^{d-1} e^{-\sum a_i} \to \infty.
\end{equation*}

We will now use the second moment method to show that $I_n$ stays close to its mean.
\begin{align*}
\E I_n^2 &= \sum_{\vect{v} \in V} \sum_{\vect{w} \in V} \prob{d_{\vect{v}} = 0,\ d_{\vect{w}} = 0}{} \\
&= \sum_{\vect{v} \in V} \prob{d_{\vect{v}} = 0}{} \    +      \sum_{\substack{\vect{v}, \vect{w} \in V \\ d(\vect{v}, \vect{w}) = 1}} \prob{d_{\vect{v}} = 0,\ d_{\vect{w}} = 0}{} \\
&\hspace{.8cm}      +      \sum_{\substack{\vect{v}, \vect{w} \in V \\ d(\vect{v}, \vect{w}) = 2}} \prob{d_{\vect{v}} = 0,\ d_{\vect{w}} = 0}{} \      +       \sum_{\substack{\vect{v}, \vect{w} \in V \\ d(\vect{v}, \vect{w}) \geq 3}} \prob{d_{\vect{v}} = 0,\ d_{\vect{w}} = 0}{}  \\ 
\\
&= \E I_n \     +   \   0  \      +        \sum_{\substack{\vect{v}, \vect{w} \in V \\ d(\vect{v}, \vect{w}) = 2}} \prob{d_{\vect{v}} = 0 \ \middle | \ d_{\vect{w}} = 0}{}\, \prob{d_{\vect{w}} = 0}{} \     +       \sum_{\substack{\vect{v}, \vect{w} \in V \\ d(\vect{v}, \vect{w}) \geq 3}} \prob{d_{\vect{v}} = 0}{} \prob{d_{\vect{w}} = 0}{}  \\
\\
&= \E I_n \     +   \      \sum_{\substack{\vect{v}, \vect{w} \in V \\ d(\vect{v}, \vect{w}) = 2}} \frac{\prob{d_{\vect{v}} = 0}{} \prob{d_{\vect{w}} = 0}{} }{ (1-p)^2} \     +       \sum_{\substack{\vect{v}, \vect{w} \in V \\ d(\vect{v}, \vect{w}) \geq 3}} \prob{d_{\vect{v}} = 0}{} \prob{d_{\vect{w}} = 0}{}  
\end{align*}
In the second line above, the second sum is equal to $0$ because if $\vect{v}$ and $\vect{w}$ are neighbors, then they cannot both be isolated.  The last sum in the second line is equal to the last sum in the third line because if $d(\vect{v}, \vect{w}) \geq 3$, then the events $\{ d_{\vect{v}}=0\}$ and  $\{d_{\vect{w}}=0 \}$ rely on disjoint sets of vertices and are thus independent.  In the case where $d(\vect{v}, \vect{w}) = 2$, these events share exactly two vertices in common, thus $\prob{d_{\vect{v}} = 0}{} =  \prob{d_{\vect{v}} = 0 \ \middle | \ d_{\vect{w}} = 0}{}\, (1-p)^2$.  Let $N_k = \# \{\vect{v} \in V \ | \ d((1, 1, \ldots, 1), \vect{v}) = k \}$ be the number of neighbors at exactly Hamming distance $k$ that a vertex in the Hamming torus has.  Then
\begin{align}
\nonumber \E I_n^2 &= \E I_n \    + \     \abs{V} N_2 \frac{\prob{d_{\vect{v}} = 0}{}^2}{(1-p)^2} \    + \     \abs{V} (\abs{V} - N_2 - N_1 - N_0)\, \prob{d_{\vect{v}} = 0}{}^2 \\
\nonumber &\lae  \E I_n \    + \     \abs{V} N_2 \prob{d_{\vect{v}} = 0}{}^2 \, (1 + 5p)\    + \     \abs{V} (\abs{V} - N_2)\, \prob{d_{\vect{v}} = 0}{}^2 \\
\nonumber &=  \E I_n \    + \    5p \abs{V} N_2 \prob{d_{\vect{v}} = 0}{}^2 \    + \     \abs{V}^2\, \prob{d_{\vect{v}} = 0}{}^2 \\
\nonumber &\leq (1 + O(\log^2 n))\, \E I_n \    + \    (\E I_n)^2,  \\
\nonumber \\
\label{varIsolated}
\Var(I_n) &=  O(\log^2 n)\, \E I_n.
\end{align}
In the second line above we applied the estimate $(1-p)^{-2} \leq (1+2p)^2 \leq (1 + 5p)$ whenever $p<1/4$.  In the fourth line we used the following facts: $\E I_n = \abs{V} \prob{d_{\vect{v}} = 0}{}$, $\prob{d_{\vect{v}} = 0}{} \leq p \leq \frac{c\log n}{n}$, and $N_2 = O(n^2)$.  We can now bound the fluctuations in $I_n$ by
\begin{align}
\prob{\abs{I_n - \E I_n} > (\E I_n)^{1/2} \log^2 n}{} \leq \frac{\Var(I_n)}{\E I_n \log^4 n} = O\left( ( \log n)^{-2} \right).
\end{align}
Since $\E I_n \gg n^{\epsilon} \gg \log^4 n$ for some $\epsilon>0$, this completes the proof of Theorem~\ref{thm-discon}.
\end{proof}

The proofs for Theorems~\ref{thm-con} and~\ref{thm-isocon} will be similar in that they appeal to the machinery that we have already developed in the proof of Theorem~\ref{thmb}.  In particular, we will merely replace Lemma~\ref{activeVerticesLemma} with a more assertive statement, then apply Lemmas~\ref{planeBoundLemma},~\ref{clusterMergeLemma} and the arguments of Section~\ref{section_supercritical1} with little modification.

\begin{proof}[Proof of Theorem~\ref{thm-con}]
Let $p = \frac{c \log n}{n}$, and observe that monotonicity implies the result for larger $p$.  Recall how we coupled a lower bounding random walk, $\vect{W}_t$, with $A'_t$ in Section~\ref{section_supercritical1}.  We wish to construct the same lower bounding walk here, and we will even use the same parameter $p_1 = \frac{\lambda_1}{n}$ where $\lambda_1$ is a constant such that $\lambda_c < \lambda_1 < \lambda_c^{(d-1)}$.  The only difference here is that now $p = p_1 + p_2 - p_1 p_2$ where $p_2 \geq \frac{\epsilon \log n}{n}$, which is okay because all we need to apply Lemma~\ref{clusterMergeLemma} is that $p_2 \geq \frac{\epsilon}{n}$.  We then can apply  Lemma~\ref{planeBoundLemma} and repeat the arguments at the end of Section~\ref{section_supercritical1} to again obtain equation~(\ref{survival-r}), which says that, provided $\abs{A'_0} \geq m = \frac{d+1}{\gamma_2} \log n$, $\abs{A'_0} = o(n^{2/3})$ and $\abs{R'_0} = o(n^{2/3})$, the process of discovering occupied vertices with parameter $p_1$ will survive until time $r = n^{d-4/3}$.  At this point, we can apply Lemma~\ref{clusterMergeLemma} to say that with high probability every cluster reaching size $r$ will connect into a single giant component.  Thus, to show that the entire subgraph is connected, we need to show that either $\abs{C_{\vect{v}}} \geq m$ or $\abs{C_{\vect{v}}} = 0$ for every $\vect{v} \in V$ with probability approaching $1$.

Consider a fixed vertex, $\vect{v}$.  Disregarding prior notation for the moment, let $A_k$ be the set of vertices in the random site subgraph of the Hamming torus to which there exists a unique shortest path of length $k$ from $\vect{v}$.  
%
%
Then for any constant $\ell$ 
\begin{align}
\nonumber \prob{\abs{A_2} < m \ \middle | \ \abs{A_1} = \ell}{p} &= \prob{e^{-\abs{A_2}} > e^{-m} \ \middle | \ \abs{A_1} = \ell}{p} \\
\nonumber & \leq e^m \E_p \left[e^{-\abs{A_2}}\ \middle | \ \abs{A_1} = \ell \right] \\
\nonumber & = e^m \left[ 1 - p(1 - e^{-1}) \right]^{\abs{\mathcal{N}(A_1) \setminus \mathcal{N}(\vect{v}) }} \\
\nonumber &\leq \exp \left[ m - p \left(1-e^{-1}\right) \abs{\mathcal{N}(A_1) \setminus \mathcal{N}(\vect{v}) } \right] \\
\nonumber &\leq \exp \left[ m - \frac{c \log n}{n} \left(1-e^{-1}\right) \left(\sum_{i=2}^{d} a_i n - \ell - d + 1\right) \ell \right] \\
\label{twoStepBd1}
 & = \exp \left[ \left(\frac{d+1}{\gamma_2}  - c \left(1-e^{-1}\right) \sum_{i=2}^{d} a_i \right) \log n +  c(1-e^{-1}) \ell (\ell + d -1) \frac{\log n}{n} \right].
\end{align}
In the fourth line above we used the bound $(1-x)\leq e^{-x}$.  In the fifth line above we assumed WLOG that $a_1 \geq a_2 \geq \cdots \geq a_d$, and used the fact that each of the $\ell$ vertices in $A_1$ has at least $(a_2 + \cdots + a_d)n - \ell - d +1$ neighbors that are not also neighbors of $\vect{v}$ or any other vertex in $A_1$.  Now, if we choose $\ell$ to be any constant so that
\begin{align}
\nonumber \frac{d+1}{\gamma_2}  - \ell c \left(1-e^{-1}\right) \sum_{i=2}^{d} a_i  \leq -(d+1) \\
\label{ellDef}
\ell \geq \frac{(d+1)(1 + 1/\gamma_2)}{c \left(1-e^{-1}\right)  \sum_{i=2}^{d} a_i },
\end{align}
then (\ref{twoStepBd1}) implies that
\begin{equation}
\label{twoStepBd2}
\prob{\abs{A_2} < m \ \middle | \ \abs{A_1} = \ell}{p} = O\left(n^{-(d+1)}\right).
\end{equation}
Let $\vect{Z}_t$ be the multitype branching process with $Z_1^i \sim \text{Binomial}(a_i n, p)$ for all $i = 1, \ldots, d$, and for all $t\geq 1$, $(Z_{t+1}^j \ | \ \vect{Z}_{t} = \vect{e_i}) \sim \text{Binomial}(a_j n, p)$ for all $i$ and $j\neq i$, and $(Z_{t+1}^i \ | \ \vect{Z}_{t} = \vect{e_i}) \equiv 0$.  Observe that $\abs{A_k} \leq \norm{\vect{Z_k}}_1$ by a now familiar coupling.  For $\theta = \log 5 \approx 1.6$
\begin{align}
\nonumber \prob{\norm{\vect{Z_1}}_1 \geq 5 \sum a_i c \log n}{} &\leq e^{-5\theta \sum a_i c \log n} \E e^{\theta \norm{\vect{Z_1}}_1} \\
\nonumber & \leq e^{-5\theta \sum a_i c \log n} \left[ 1 + p \left( e^\theta -1 \right) \right]^{\sum a_i n} \\
\nonumber & \leq \exp \left[ -5\theta \sum a_i c \log n \, + \,  \left( e^\theta -1 \right) \sum a_i c \log n\right] \\
\nonumber & \leq \exp \left[ \left(e^{\theta} - 1 - 5\theta \right) \sum a_i c \log n \right]\\
\nonumber & \leq \exp \left[ -4 (d-1) \log n \right]\\
\label{twoStepUpperBd1}
&= O(n^{-(d+2)}).
\end{align}
Now, applying a union bound,
\begin{align}
\nonumber \prob{\norm{\vect{Z_2}}_1 \geq \left (5 \sum a_i c \log n \right)^2 \ \middle | \ \norm{\vect{Z_1}}_1 \leq 5 \sum a_i c \log n}{} &  \\
\nonumber & \hspace{-4cm} \leq \left(5 \sum a_i c \log n\right) \prob{\norm{\vect{Z_1}}_1 \geq 5 \sum a_i c \log n}{} \\
\label{twoStepUpperBd2}
& \hspace{-4cm} = O(n^{-(d+1)}).
\end{align}
Combining the last two inequalities shows that $\abs{A_1}$ and $\abs{A_2}$ are both $o(n^{2/3})$ with high probability.  So we can apply Lemma~\ref{planeBoundLemma} with $A'_0 = A_2$, $R'_0 = A_1 \cup \{\vect{v}\}$ and $U'_0 = V \setminus \mathcal{N}(R'_0)$, and continue the arguments discussed above to show that all vertices for which $d_{\vect{v}} \geq \ell$ will be in the same large component with probability exceeding $1-O(n^{-1})$ (by a union bound over all vertices in $V$).  Thus, we need to show that all vertices either have $d_{\vect{v}} = -1$ or $d_{\vect{v}}\geq \ell$.
\begin{align*}
\prob{d_{\vect{v}} < \ell \ \middle | \ d_{\vect{v}} \neq -1}{} &= \sum_{j = 0}^{\ell-1} { \sum a_i n - d \choose j} \left( \frac{c \log n}{n} \right)^j  \left( 1- \frac{c \log n}{n} \right)^{\sum a_i n - d-j} \\
& \leq \sum_{j = 0}^{\ell-1} \frac{(c \log n)^j}{j!} \left(\sum a_i \right)^j e^{- \sum a_i c \log n + (d + j)c \log n / n} \\
& \leq \ell \, (c \log n)^{\ell-1} \, n^{-\sum a_i c} \, e^{(d + \ell - 1)c \log n / n} \, \max \left\{ 1, \left(\sum a_i\right)^{\ell -1}\right\} \\
& = O(n^{-\sum a_i c} (\log n)^{\ell-1}).
\end{align*}
So, since $\sum a_i c > d-1$, we have that
\begin{equation*}
\prob{0 \leq d_{\vect{v}} < \ell}{} = O(n^{-\sum a_i c - 1} (\log n)^{\ell}) = o(n^{-d}),
\end{equation*}
and by the union bound
\begin{equation*}
\prob{0 \leq d_{\vect{v}} < \ell \text{ for some } \vect{v} \in V}{} = o(1).
\end{equation*}
This completes the proof of Theorem~\ref{thm-con}.
\end{proof}

\begin{proof}[Proof of Theorem~\ref{thm-isocon}]
The proof here will be very similar to that of Theorem~\ref{thm-con}, except that here we will show that with probability approaching 1 one of three things can occur for each $\vect{v} \in V$: $\abs{C_{\vect{v}}} = 0$, $\abs{C_{\vect{v}}} = 1$ or $\abs{C_{\vect{v}}} \geq m$.  The last case implies, by the discussion at the start of the last proof, that $\vect{v}$ is in the giant component.  We will also rely on the same notation from the last proof.

If $d_{\vect{v}} \leq 0$ then $\vect{v}$ is either isolated or not in the subgraph, so we consider the cases where $d_{\vect{v}} \geq 1$.  If $d_{\vect{v}} \geq \ell$, where $\ell$ is defined according to~(\ref{ellDef}), then equations~(\ref{twoStepBd1}) and~(\ref{twoStepBd2}) still hold, as well as the arguments that follow, so $\vect{v}$ will be in the giant component with probability approaching one.  Now we must consider the case where $1 \leq d_{\vect{v}} < \ell$.  Suppose $d_{\vect{v}} = l$ where $1\leq l < \ell$, then
\begin{align}
\nonumber \prob{\abs{A_2} < \ell,\ d_{\vect{v}} =l}{} & \\
\nonumber & \hspace{-2.5cm} = p \, {\sum a_i n \choose l} \, p^l (1-p)^{\sum a_i n - d - l} \ \sum_{j=0}^{\ell - 1} {\abs{\mathcal{N}(A_1) \setminus \mathcal{N}(\vect{v})} \choose j} p^j (1-p)^{\abs{\mathcal{N}(A_1) \setminus \mathcal{N}(\vect{v})}-j } \\
\nonumber & \hspace{-2.5cm} \leq \sum_{j=0}^{\ell -1} \left(l \sum a_i \right)^{j+l} \frac{(c \log n)^{j+l+1} }{j!}\, n^{-c((l+1)\sum_{i=2}^d a_i + a_1) - 1} \, e^{(j + l (l+d -1))c \log n / n} \\
\nonumber & \hspace{-2.5cm} \leq \ell \max\left\{  \left( \sum a_i \right)^{l},\, \left( \sum a_i \right)^{\ell+l} \right\} (c \log n)^{\ell+1}\, n^{-c((l+1)\sum_{i=2}^d a_i + a_1) - 1} \, e^{(\ell + l(l+d -1))c \log n / n} \\
\label{threeStepBd1}
& \hspace{-2.5cm} = o(n^{-d}).
\end{align}
The second line above is because each of the $l$ vertices in $A_1$ have at least $(a_2 + \cdots + a_d) n - l - d + 1$ neighbors that are not also neighbors of $\vect{v}$ or any other vertex in $A_1$.  The last line above is because $c > \frac{d-1}{2\sum_{i=2}^d a_i + a_1 }$.  Summing the above probabilities implies that $\prob{\abs{A_2} < \ell,\ 1 \leq d_{\vect{v}} < \ell}{} = o(n^{-d})$.  So for every $\vect{v} \in V$, if $d_{\vect{v}} \leq 0$, then $\vect{v}$ is either isolated or not in the random subgraph.  If $d_{\vect{v}} \geq \ell$, then $\vect{v}$ is in the giant component with probability approaching one.  The probability of none of the last three cases occurring and $\vect{v}$ having fewer than $\ell$ occupied vertices at distance $2$ approaches $0$.  To complete the proof, we merely need to show that if $\vect{v}$ has $\ell$ occupied vertices at distance $2$ then $\vect{v}$ will have at least $m$ vertices at distance $3$.  This is easy because the arguments are identical to those of inequalities~(\ref{twoStepBd1}),~(\ref{twoStepBd2})-(\ref{twoStepUpperBd2}), but with $\abs{A_2}$ and $\abs{A_1}$ replaced by $\abs{A_3}$ and $\abs{A_2}$, and $\ell$ replaced by $4\ell$ in inequality~(\ref{twoStepBd1}).  This last substitution is because if $\abs{A_1}< \ell$ and $\abs{A_2} = \ell$ then each vertex in $A_2$ has at least $(a_2 + \cdots + a_d) n - 4\ell - d + 1$ neighbors that are not also neighbors of $\vect{v}$ or any other vertex in $A_1$ or $A_2$.  This completes the proof of Theorem~\ref{thm-isocon}.

\end{proof}

	\bibliographystyle{abbrv}
	\bibliography{proposal_cite}
  
\end{document}